\theoremstyle{plain}
\newtheorem{theorem}{Theorem}[section]
\newtheorem{lemma}[theorem]{Lemma}
\newtheorem{proposition}[theorem]{Proposition}
\newtheorem{conjecture}[theorem]{Conjecture}
\newtheorem{definition}[theorem]{Definition}
\newtheorem{question}[theorem]{Question}
\theoremstyle{definition}
\newtheorem{remark}[theorem]{Remark}
\newtheorem{maintheorem}{Theorem}
\newcommand{\ZZ}{\mathbb{Z}}			%
\newcommand{\NN}{\mathbb{N}}			%
\newcommand{\RR}{\mathbb{R}}			%
\newcommand{\QQ}{\mathbb{Q}}			%
\newcommand{\Z}{\mathbb{Z}}			%
\newcommand{\N}{\mathbb{N}}			%
\newcommand{\R}{\mathbb{R}}			%
\newcommand{\C}{\mathbb{C}}			%
\newcommand{\ZZd}{{\mathbb{Z}^d}}
\newcommand{\bg}{{\boldsymbol{g}}}
\newcommand{\bk}{{\boldsymbol{k}}}
\newcommand{\bn}{{\boldsymbol{n}}}
\newcommand{\bm}{{\boldsymbol{m}}}
\newcommand{\bp}{{\boldsymbol{p}}}
\newcommand{\bq}{{\boldsymbol{q}}}
\newcommand{\bu}{{\boldsymbol{u}}}
\newcommand{\bv}{{\boldsymbol{v}}}
\newcommand{\bx}{{\boldsymbol{x}}}
\newcommand{\bT}{{\boldsymbol{T}}}
\newcommand{\bomega}{{\boldsymbol{\omega}}}
\newcommand{\vecomega}{{\bomega}}
\newcommand{\Acal}{\mathcal{A}}
\newcommand{\Lcal}{\mathcal{L}}
\newcommand{\Pcal}{\mathcal{P}}
\newcommand{\T}{\mathcal{T}}
\newcommand{\Xcal}{\mathcal{X}}
\newcommand{\dist}{\mathrm{dist}}
\newcommand{\card}{\mathrm{card}}
\newcommand{\norm}[1]{%
	\left\lVert#1\right\rVert%
}
\title{%
	Nonexpansive directions in the Jeandel-Rao Wang shift
}
\author[S.~Labb\'e]{S\'ebastien Labb\'e}
\address[S.~Labb\'e]{Univ. Bordeaux, CNRS,  Bordeaux INP, LaBRI, UMR 5800, F-33400, Talence, France}
\email{sebastien.labbe@labri.fr}
\author[C.~Mann]{Casey Mann}
\address[C.~Mann]{University of Washington Bothell, 18115 Campus Way NE, Bothell, WA 98011-8246}
\email{cemann@uw.edu}
\author[J.~McLoud-Mann]{Jennifer McLoud-Mann}
\address[J.~McLoud-Mann]{University of Washington Bothell, 18115 Campus Way NE, Bothell, WA 98011-8246}
\email{jmcloud@uw.edu}
\keywords{Aperiodic tiling \and Wang shift \and SFT \and Multidimensional SFT
\and Nonexpansive directions}
\subjclass[2020]{Primary 37B51; Secondary 37B10, 52C23}
\date{}
\begin{document}

	\maketitle

		\begin{abstract}
We show that $\{0,\varphi+3,-3\varphi+2,-\varphi+\frac{5}{2}\}$ is the set of slopes of nonexpansive directions for a minimal subshift in the Jeandel-Rao Wang shift, where $\varphi=(1+\sqrt{5})/2$ is the golden mean. This set is a topological invariant allowing to distinguish the Jeandel-Rao Wang shift from other subshifts. Moreover, we describe the combinatorial structure of the two resolutions of the Conway worms along the nonexpansive directions in terms of irrational rotations of the unit interval. The introduction finishes with pictures of nonperiodic Wang tilings corresponding to what Conway called the cartwheel tiling in the context of Penrose tilings. The article concludes with open questions regarding the description of octopods and essential holes in the Jeandel-Rao Wang shift.
		\end{abstract}
	
	\medskip

	\section{Introduction}\label{sec:introduction}

A \emph{tiling} of the Euclidean plane $\mathbb{E}^2$ is a collection of sets called \emph{tiles} (typically topological disks) whose interiors are pairwise disjoint and whose union is $\mathbb{E}^2$. A   \emph{protoset} for a tiling $T$ is a minimal collection $\mathcal{T}$ of tiles of $T$ such that every tile in $T$ is congruent to a tile in $\mathcal{T}$, and in this case we say that $\mathcal{T}$ \emph{admits} the tiling $T$. If $\mathcal{T}$ admits a tiling $T$, it may also admit other tilings, and the collection of all tilings admitted by $\mathcal{T}$ is called the \emph{tiling space} of $\mathcal{T}$.  Restrictions are sometimes made regarding the kinds of rigid motions allowed in forming tilings from copies of tiles in a protoset; it is not uncommon to allow only direct rigid motions (no reflections) or to allow only translations. The \emph{symmetry group} of a tiling $T$, denoted $S(T)$, is the collection of rigid motions $\sigma$ such that $\sigma(T) = T$. If $S(T)$ contains two nonparallel translations, we say that $T$ is \emph{periodic}; otherwise, $T$ is \emph{nonperiodic}. It is a special kind of protoset that admits only nonperiodic tilings; such protosets are called \emph{aperiodic protosets}, and such a protoset is the focus of this article.

Aperiodic protosets have an interesting history, which we shall briefly touch on here. The first example of an aperiodic protoset was discovered by R. Berger and contained 20426 distinct tiles \cite{MR216954}. Berger's protoset consisted of squares having edge matching rules, with the restriction that only translates of the tiles in the protoset can be used to form a tiling. The squares of such protosets are today known as \emph{Wang tiles}, named after Berger's thesis advisor H. Wang who had conjectured that any protoset of Wang tiles that admits a tiling must admit at least one periodic tiling; thus Berger proved Wang's conjecture is false with his discovery of the first aperiodic protoset. From \cite{GS1} we recount some of the early history of aperiodic protosets of Wang tiles: Not long after Berger's initial discovery, Berger himself was able to reduce the number of tiles needed to form an aperiodic protoset of Wang tiles down to 104 tiles; in 1968, Knuth reduced it further to 92 \cite[p.~384]{Knuth68}. Around the same time, though a correct version of it was not published until much later, H. L\"{a}uchli reduced the number to 40, and soon thereafter R. M. Robinson was able to reduce the number to 35. All of these protosets were based on Berger's original aperiodic Wang tile protoset, but in the 1970s R. Penrose discovered a new non-Wang tile aperiodic protoset consisting of only 2 tiles \cite{Penrose79}. Penrose showed that his order-2 aperiodic protoset could be used to produce an order-34 aperiodic Wang tile protoset, and R. M. Robinson subsequently used this construction to produce an aperiodic protoset of 32 Wang tiles (it is pointed out in \cite{GS1} that Robinson's construction can actually produce an order-24 aperiodic Wang protoset). Later, Robinson discovered a way to produce another aperiodic Wang tile protoset, this time consisting of 24 tiles, based on Ammann's order-2 aperiodic protoset. Around this time (late 1970s) Ammann found a way to reduce the number of tiles in an aperiodic protoset of Wang tiles to 16 via the use of ``Ammann bars" applied to his order-2 aperiodic protoset. More recent reductions in the number of tiles in aperiodic protosets of Wang tiles have been discovered by Kari (order-14, \cite{Kari96}) and Culik (order-13, \cite{Culik96}), finally culminating with the order-11 aperiodic protoset of Wang tiles discovered recently by Jeandel and Rao \cite{JR1}; in this work, the authors proved that 11 is the smallest possible size of an aperiodic protoset of Wang tiles.  If we do not restrict attention to Wang tiles, order-2 aperiodic protosets are known, such as the sets discovered by Penrose and Ammann mentioned above, and recently an aperiodic protoset consisting of a single tile was discovered \cite{smith_aperiodic_2023}.

The connection of aperiodic protosets and nonperiodic order in tilings to quasicrystals (discovered in 1982 by Shechtman \cite{Shec84}) stimulated much research in the 1980s onward. An excellent overview of order in aperiodic tiling spaces is given in \cite{BG13}. This work summarizes the state of the art at the time of its publication, and included various methods for analyzing tiling spaces of aperiodic protosets, including the cut-and-project method and, of particular importance to this article, modeling tiling spaces as dynamical systems. Another important contribution in this area is in \cite{MR1355301}, where the space of tilings admitted by the Penrose tiling are explained in terms of symbolic dynamics. This will be the point of view in this article; in particular, we will view a protoset $\mathcal{T}$ as a finite alphabet, and tilings $T$ admitted by $\mathcal{T}$ can be realized as configurations $x \in \mathcal{T}^{\ZZ^2} = \{ x \!:\! \ZZ^2 \rightarrow \mathcal{T}\}$ that do not contain any patterns from a finite set of forbidden patterns (corresponding to the ways in which the Wang tiles cannot be placed adjacent to one another). As such, the set of all tilings admitted by $\mathcal{T}$ is a shift of finite type. Translating questions about tilings admitted by $\mathcal{T}$ to the language of dynamical systems (and vice versa) gives interesting connections to previously unrelated concepts. In this article, we will examine the dynamical systems notion of \emph{nonexpansive directions} in the context of the space of tilings admitted by the Jeandel-Rao minimal order aperiodic Wang protoset.

\begin{figure}[h]
\centering
\begin{subfigure}[h]{.45\textwidth} 
\centering
\includegraphics[width=\textwidth]{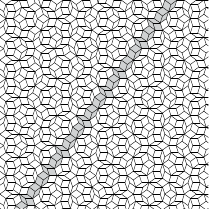} 
\end{subfigure}
\begin{subfigure}[h]{.45\textwidth} 
\centering
\includegraphics[width=\textwidth]{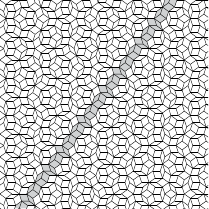}  
\end{subfigure}\caption{Two resolutions of singular tilings by unmarked Penrose rhombs. The parts of the tilings where the two tilings are different is shaded and are called Conway worms.}\label{fig:conway_worms} 
\end{figure}

\subsection*{Conway Worms and Nonexpansive Directions}

The notion of \emph{Conway worms} was considered in \cite[\S 10.5]{GS1} in the
context of tilings by Penrose kites and darts. It was then defined as ``\emph{a
sequence of bow ties placed end to end}'' and it was proved that
every tiling by Penrose kites and darts contains arbitrarily long finite Conway worms, see
\cite[10.5.8]{GS1}. Also it was noted that there are 5 different possible
slopes for these Conway worms and the difference between any two of them is a
multiple of $\frac{\pi}{5}$.

The understanding of Penrose tilings was greatly improved
by N. G. de Bruijn
who for the first time expressed them in terms of cut and project schemes where
the aperiodic tilings are described as the projection of a lattice living in the product
of the physical space of dimension two and some internal space of dimension three \cite{MR609465}.
Based on this work, Robinson further developed  the dynamical properties of
Penrose tilings \cite{MR1355301}.
In particular, he expressed Conway worms appearing in the singular Penrose
tilings in terms of coincidences happening in the internal space
and noted that Conway worms come in pairs
\cite[\S 6]{MR1355301} that he called positive and negative resolutions of
a Conway worm (see also the same idea appearing in \cite[Figures 12 and 13]{MR609465}).
Figure~\ref{fig:conway_worms} shows a portion of a singular tiling by unmarked Penrose rhombs containing a Conway worm and its two resolutions, see also \cite[Figure 7.22]{BG13}.
A reproduction of Figure 8 from \cite{MR1355301} illustrating the two ways to resolve Conway worms in
the context of Penrose tilings is shown in \Cref{fig:conway-worm-penrose}.
Notice that the existence of an infinite Conway worm of a given slope
$\alpha\in\RR\cup\{\infty\}$ implies the existence of a tiling of some half-plane
delimited by a line of slope $\alpha$ which has more than one completion to a
tiling of the whole plane. 
The notion of Conway worms may give more insights on a family of tilings.
For instance, it allows one to prove 
that tiles occur in only finitely many orientations 
in parallelogram tilings using a finite number of shapes \cite{MR3038528}.

\begin{figure}[h]
\begin{center}
    \begin{tikzpicture}[very thick, scale=.5]
    \def\A{++ (18:1)}  \def\a{++ (198:1)}
    \def\B{++ (90:1)}  \def\b{++ (270:1)}
    \def\C{++ (162:1)} \def\c{++ (342:1)}
    \def\D{++ (234:1)} \def\d{++ (54:1)}
    \def\E{++ (306:1)} \def\e{++ (126:1)}

    \def\OuterBoundary{
    \node[left] at (0,0) {$\cdots$};
    \draw (0,0) --\B --\A --\c --\A --\c --\d --\E --\A --\c --\d --\E -- \b node[right] {$\cdots$};
    \draw (0,0)      --\c --\A --\c --\A --\E --\d --\c --\A --\E --\d;
    \draw (0,0)   \B   \A   \c --\b;
    \draw (0,0)   \B   \A   \c   \A   \c --\b;
    \draw (0,0)   \B   \A   \c   \A   \c   \d   \E --\b;
        \draw (0,0)   \B   \A   \c   \A   \c   \d   \E   \A   \c --\b;}

\node (worm) at (0,0) {
    \begin{tikzpicture}[very thick, scale=.5]
        \OuterBoundary
    \end{tikzpicture}
};

\node (neg) at (-7,-4) {
    \begin{tikzpicture}[very thick, scale=.5]
        \OuterBoundary
    \draw (0,0)   \B --\c --\A --\c --\A --\E --\d --\c --\A --\E --\d;
    \draw (0,0)   \B   \c --\b;
    \draw (0,0)   \B   \c   \A   \c --\b;
    \draw (0,0)   \B   \c   \A   \c   \A   \E --\b;
    \draw (0,0)   \B   \c   \A   \c   \A   \E   \d   \c --\b;
    \draw (0,0)   \B   \c   \A   \c   \A   \E   \d   \c   \A   \E --\b;
    \end{tikzpicture}
};

\node (pos) at (7,-4) {
    \begin{tikzpicture}[very thick, scale=.5]
        \OuterBoundary
    \draw (0,0)      --\A --\c --\A --\c --\d --\E --\A --\c --\d --\E;
    \draw (0,0)   \B   \A --\b;
    \draw (0,0)   \B   \A   \c   \A --\b;
    \draw (0,0)   \B   \A   \c   \A   \c   \d --\b;
    \draw (0,0)   \B   \A   \c   \A   \c   \d   \E   \A --\b;
    \draw (0,0)   \B   \A   \c   \A   \c   \d   \E   \A   \c   \d --\b;
    \end{tikzpicture}
};

        \draw[->,bend right] (worm) to node[align=left,left] {one\\resolution} (neg);
        \draw[->,bend left]  (worm) to node[align=left,right] {another\\resolution} (pos);
\end{tikzpicture}
\end{center}
\caption{An illustration of an unresolved Conway worm made of two kinds of
hexagons together with its two resolutions within a Penrose tiling.}
\label{fig:conway-worm-penrose}
\end{figure}
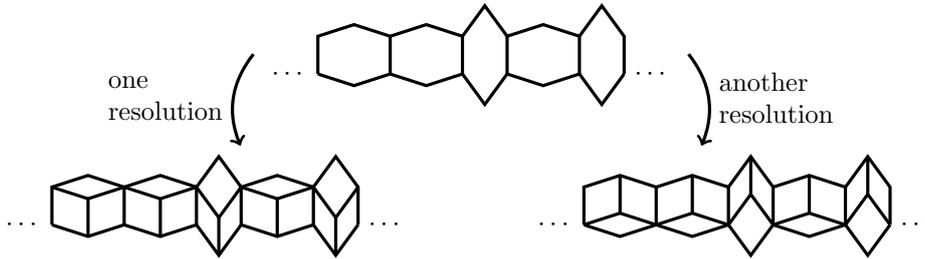

In the context of subshifts, the concept of Conway worms
is formalized in terms of nonexpansiveness. 
Let $F$ be a subspace of $\RR^d$. Given $t>0$, the \emph{$t$-neighborhood} of $F$ is defined by $F^t:=\{g\in\ZZ^d\colon\dist(g,F)\leq t\}$. Let $X\subset\Acal^\ZZd$ be a subshift, and for any subset $S \subset\ZZ^d$ and configuration $x \in X$, let $x|_S$ denote the restriction of $x$ to $S$. Following Boyle and Lind \cite{MR1355295}, a subspace $F\subset\RR^d$ is \emph{expansive} on $X$ if there exists $t>0$ such that for any $x,y \in X$, $x|_{F^t}=y|_{F^t}$ implies that $x=y$. Moreover, a subspace $F$ is \emph{nonexpansive} if for all $t > 0$, there exist $x,y \in X$ such that $x|_{F^t} = y|_{F^t}$ but $x \neq y$. 
If $F$ is expansive, then every translate of $F$ is expansive.
Thus, in the 2-dimensional case, which will be the focus of this article, we
refer to \emph{nonexpansive directions}. 

Boyle and Lind \cite[Theorem 3.7]{MR1355295} showed that if $X\subset\Acal^\ZZd$ is an infinite subshift, then, for each $0\leq n<d$, there exists a $n$-dimensional subspace of $\RR^d$ that is nonexpansive on $X$. Answering a question of Boyle and Lind, Hochman proved that any one-dimensional subspace in the plane $\RR^2$ occurs as the unique nonexpansive one-dimensional subspace of a $\ZZ^2$-action \cite{MR2755923}. As a consequence, Hochman proved that a set of one-dimensional subspaces occurs as the set of nonexpansive directions for a subshift $X\subset\Acal^{\ZZ^2}$ if and only if it is closed and non-empty. The notions of expansive and nonexpansive directions was used to obtain partial results toward solving Nivat's conjecture, an important problem in symbolic dynamics, see for instance \cite{MR3356945,colle_nivats_2019}.

The notion of nonexpansive direction can also be stated equivalently in terms of 
nonexpansive half-spaces.
Let $X\subset\Acal^\ZZd$ be a subshift and $\sigma$ be a $\Z^d$-action on $X$.
We say that a half-space $H\subset\R^d$ is \emph{nonexpansive} for $\sigma$ if there exist
$x,y \in X$ such that $x|_{\Z^d\cap H} = y|_{\Z^d\cap H}$ but $x \neq y$.
It was proved in the preliminary section of \cite{MR1869066} that a codimension
1 subspace $V$ of $\R^d$ is nonexpansive for $\sigma$ if and only if there is
a half-space $H$ whose boundary is $V$ and which is nonexpansive for $\sigma$,
see \Cref{lem:half-space-is-good}.
The set of nonexpansive directions is difficult to compute in general and
brings a deeper understanding of a subshift since it is a topological
invariant, see Lemma~\ref{lem:top_invar}.

Conway worms can be defined in the context of subshifts from nonexpansiveness.
Let $x,y\in X$ be two configurations.
The support of positions where $x$ and $y$ are distinct is the set
$D(x,y)=\{\bn\in\Z^d\mid x_\bn\neq y_\bn\}$.
We say that the set $D(x,y)$ is a 
\emph{Conway worm} associated to a subspace $F$ if
there exists $t>0$ such that $\varnothing\neq D(x,y)\subset F^t$.
Observe that if $S\subset\Z^d$ is a Conway worm associated to a subspace $F$,
then $F$ is nonexpansive.
Also, reusing the vocabulary proposed in \cite{MR1355301},
we say that the restriction of the configurations $x$ and $y$
to the support $D(x,y)$ are two \emph{resolutions of the Conway
worm}.
In this article, we are interested in describing
the Conway worms and their resolutions in the Jeandel-Rao Wang shift.

\subsection*{The Jeandel-Rao Wang shift}

\begin{figure}[h]
\begin{center}
    \includegraphics[scale=.9]{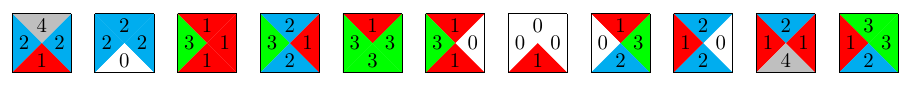}
\end{center}\caption{The aperiodic set $\T_0$ of 11 Wang tiles discovered by Jeandel and Rao in 2015 \cite{JR1}}
\label{fig:JR-tile-set}
\end{figure}

\begin{figure}[h]
\begin{center}
    \includegraphics[scale=.9]{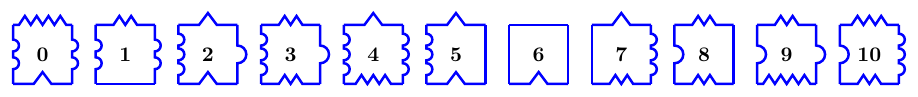}
\end{center} 
\caption{Jeandel-Rao tiles can be encoded into a set of equivalent geometrical shapes in the sense that every tiling using Jeandel-Rao tiles can be transformed into a unique tiling with the corresponding geometrical shapes and vice versa.} \label{fig:JR-tile-set-geometric} 
\end{figure}

    Wang tiles are unit squares with labeled edges. From a finite collection $\mathcal{T}$ of Wang tiles, called a \emph{protoset}, we place copies of tiles from $\mathcal{T}$ at points of $\ZZ^2$ to form a configuration. Thus, a \emph{configuration} is a map $x:\ZZ^2 \rightarrow \mathcal{T}$ where we think of $\mathcal{T}$ as a finite alphabet (i.e., $x \in \mathcal{T}^{\ZZ^2}$).  Such a configuration is \emph{valid} if it assigns tiles to $\ZZ^2$ so that contiguous edges have the same color. Let $\Omega_{\mathcal{T}} \subset \mathcal{T}^{\ZZ^2}$ denote the set of valid configurations; we call this the \emph{Wang shift} of $\mathcal{T}$. Because of the finiteness of the protoset $\mathcal{T}$, it is seen that there is a finite set of forbidden patterns (i.e., when two Wang tiles from $\mathcal{T}$ may not meet along an edge) so that $\Omega_{\mathcal{T}}$ is a shift of finite type (SFT).
    A configuration $x \in \Omega_{\mathcal{T}}$ is \emph{periodic} if there
    exists $u \in \ZZ^2 \setminus \{0\}$ such that $x + u = x$. If
    $\Omega_{\mathcal{T}} \neq \varnothing$ and $x$ is not periodic for all $x
    \in \Omega_{\mathcal{T}}$, then we say the protoset $\mathcal{T}$ is
    \emph{aperiodic}. 

The aperiodic set of 11 Wang tiles discovered by Jeandel and Rao \cite{JR1}
is shown in Figure~\ref{fig:JR-tile-set}. An equivalent geometrical
representation of Jeandel-Rao tiles is shown in Figure~\ref{fig:JR-tile-set-geometric}.
    
\subsection*{Main results}
As noticed in \cite{JR1}, there exist tilings of the plane containing a
bi-infinite horizontal strip of tiles numbered 0. Since only a tile numbered 9 can be on top of a tile numbered 0, we have the following bi-infinite strip of height 2:
\begin{center}
  $\cdots$ 
    \includegraphics[width=.8\linewidth]{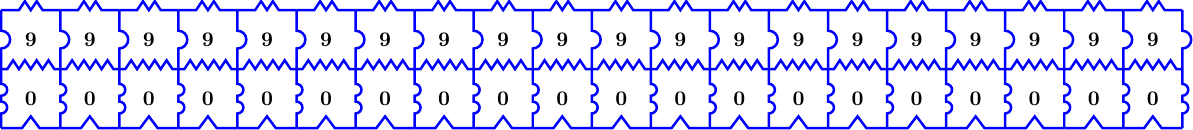}
  $\cdots$ 
\end{center}
It turns out that the above strip is a Conway worm. Indeed its other resolution can be
obtained by replacing the tiles numbered 9 with tiles numbered 1 and replacing the tiles numbered 0 with tiles numbered 6: 
\begin{center}
  $\cdots$ 
  \includegraphics[width=.8\linewidth]{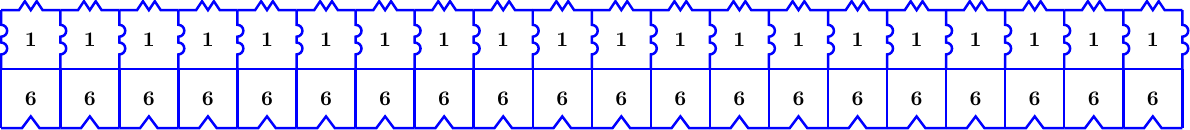}
  $\cdots$ 
\end{center}
We observe that both strips have the same constraints on top and at the bottom
making them replaceable by one another.
The tiling above and below of the two strips is shown in \Cref{fig:partial-tiling-70}.
This means that 0 is the slope of a nonexpansive direction within Jeandel-Rao Wang shift.
\begin{figure}[h]
\begin{center}
    \begin{tikzpicture}[scale=.45]
        \node at (0,0) {\includegraphics[scale=.45]{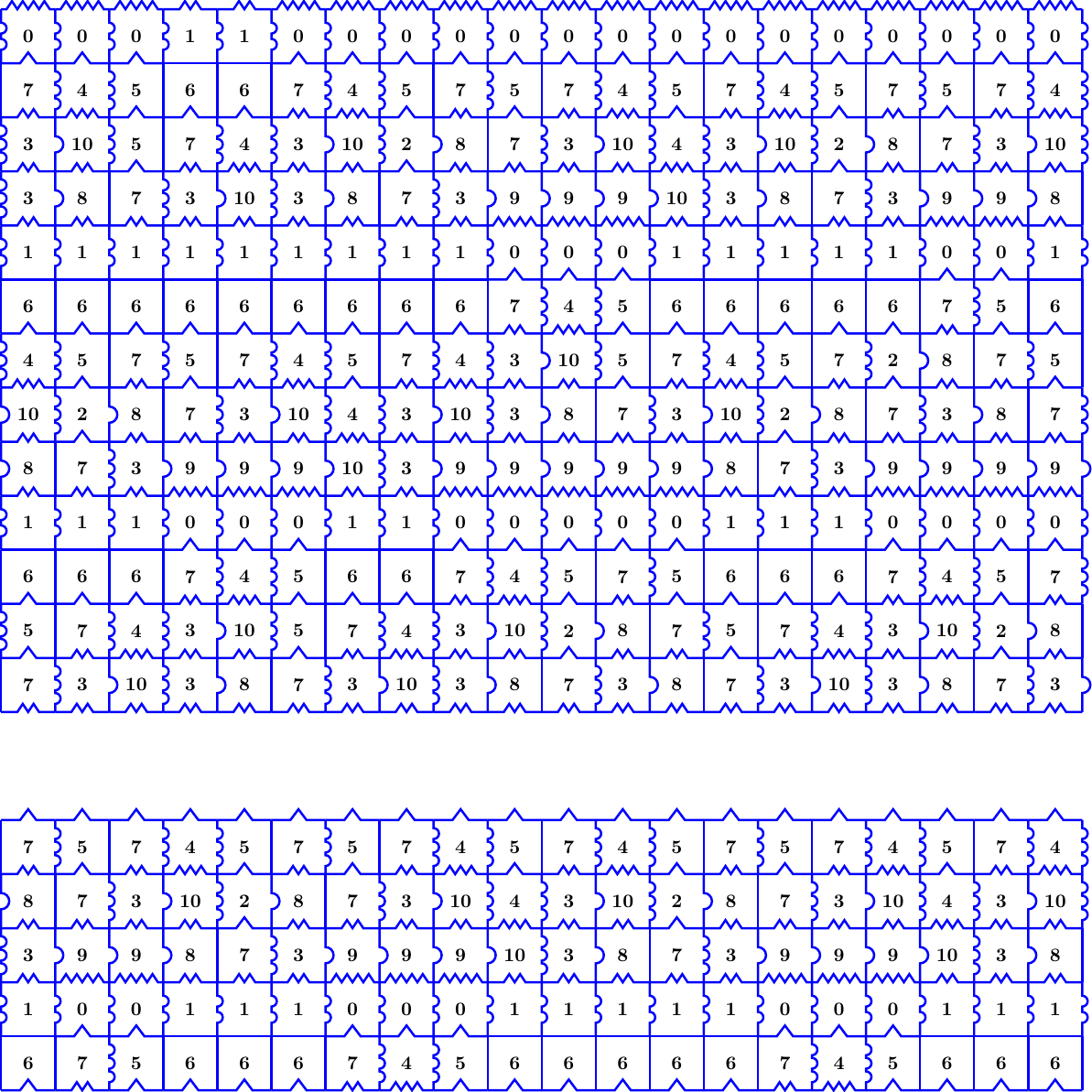}};
        \draw[dashed] (-14,-4) -- node[sloped,near end,fill=white] {Conway worm of slope 0} (14,-4);
    \end{tikzpicture}
\end{center}
    \caption{A partial tiling of the plane with an unresolved Conway worm of slope 0.}
    \label{fig:partial-tiling-70}
\end{figure}

In this article, we compute the nonexpansive directions for the minimal subshift $X_0$ of the Jeandel-Rao Wang shift $\Omega_0$.  
The description of the minimal subshift $X_0$ was given
as a subshift of finite type in \cite{MR4226493}
and as the symbolic dynamical system of a toral $\ZZ^2$-rotation coded by a polygonal partition in
\cite{Lab1}. The equality of the two descriptions was proved in \cite{Labb-Rauzy-2021}.
A review of these results are given in Section~\ref{sec:jeandel-rao-wang-shift}. 
As opposed to the nonexpansive directions in Penrose tilings which are the directions perpendicular to the fifth roots of unity, see \cite[Theorem 5.1.1]{jang_directional_2021}, we obtain a more surprising and far less symmetric result for the minimal subshift $X_0$.

\begin{maintheorem}\label{thm:main-theorem}
    The minimal subshift $X_0$ of the Jeandel-Rao Wang shift 
    contains exactly 4 nonexpansive directions whose slopes are
    $\{0,\varphi+3,-3\varphi+2,-\varphi+\frac{5}{2}\}$. \label{thm:main}
\end{maintheorem}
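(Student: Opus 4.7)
The plan is to split the argument into a lower bound (each of the four listed slopes is nonexpansive) and an upper bound (no other slope is). For the lower bound, \Cref{lem:half-space-is-good} allows me to work with half-spaces: it suffices, for each candidate slope $\alpha\in\{0,\varphi+3,-3\varphi+2,-\varphi+\frac{5}{2}\}$, to exhibit two distinct configurations $x,y$ of the minimal subshift agreeing on $\Z^2\cap H$ for some half-plane $H$ whose boundary line has slope $\alpha$. The case $\alpha=0$ is already handled by the construction of \Cref{fig:partial-tiling-70}, and the three remaining cases are of the same flavour.

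For the three irrational slopes, I would look for analogous bi-infinite strips in the Jeandel-Rao shift: in each case the strip should consist of a finite collection of super-tile blocks concatenated along a line of slope $\alpha$, whose two possible resolutions (positive and negative, in the terminology of \cite{MR1355301}) are locally interchangeable because both impose identical adjacency constraints on the tiles outside the strip. The abstract already announces that these resolutions can be encoded by interval exchange transformations, so the concrete task here is to identify, for each of the three slopes, the two complementary IETs whose orbits describe the strip. Geometrically, the shapes in \Cref{fig:JR-tile-set-geometric} should expose the three orientations along which such strips can run, and the numerical values $\varphi+3$, $-3\varphi+2$ and $-\varphi+\frac{5}{2}$ should arise as slopes of naturally occurring edges in that geometric representation.

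For the upper bound my approach is to exploit the dynamical description of the minimal subshift: the minimal subshift of the Jeandel-Rao Wang shift is measurably conjugate to a $\Z^2$-rotation on a 2-torus via a coding map that is continuous outside a finite union of affine line segments. A direction is nonexpansive precisely when the associated line in the internal space runs parallel to one of these discontinuity segments, since that parallelism is exactly what produces the Conway worms of the lower bound. I would therefore show that the discontinuity set of the coding consists of edges of exactly the four listed slopes. A purely combinatorial alternative uses the substitutive self-similar structure of the minimal subshift: if a direction $\alpha$ not in the list were nonexpansive, then desubstituting two witnessing configurations repeatedly would propagate the disagreement to coarser and coarser scales while preserving the slope, eventually contradicting the geometry of the substitution.

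The main obstacle I anticipate is the upper bound. Exhibiting the four Conway worms is essentially a finite combinatorial verification once the right strips have been located; ruling out every other slope, by contrast, requires controlling a genuinely infinite family of directions. The decisive input is a complete identification of the minimal subshift with a toral $\Z^2$-rotation together with an explicit polygonal description of its discontinuity set, and most of my effort would go into establishing this description and verifying that its edges indeed have slopes exactly $0$, $\varphi+3$, $-3\varphi+2$ and $-\varphi+\frac{5}{2}$.
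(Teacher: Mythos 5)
Your overall framework for the upper bound --- pass to the maximal equicontinuous factor $(\R^2/\Gamma_0,\Z^2,R_0)$, note that the coding is one-to-one off the orbit of the partition boundary, and locate nonexpansive directions there --- is exactly the paper's route (Lemmas~\ref{lem:non-singleton-fibers}, \ref{lem:agree_on_half_plane} and \ref{lem:non_exp_dense}). But your decisive step contains a genuine error: you assert that a direction is nonexpansive precisely when it is \emph{parallel} to one of the discontinuity segments of the coding. That criterion is false for this system, and it would output the wrong answer. The boundary $\Delta$ of the partition $\Pcal_0$ consists of segments of slopes $0$, $\infty$, $\varphi$ and $\varphi^2$, whereas the nonexpansive slopes are $0$, $\varphi+3$, $-3\varphi+2$ and $-\varphi+\tfrac52$; the paper stresses explicitly that the relation between the two sets of slopes ``is not equality.'' The reason parallelism fails is that the relevant object is not the segment itself but the set $\{\bn\in\Z^2: R_0^{\bn}(p)\in\Delta\}$ of integer translates returning a boundary point to the boundary. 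Since the torus is $\R^2/\Gamma_0$ with $\Gamma_0=\langle(\varphi,0),(1,\varphi+3)\rangle_\Z$ incommensurable with $\Z^2$, solving $p+\bn\equiv \boldsymbol{g}+\alpha\vec\omega \pmod{\Gamma_0}$ forces linear relations over $\Z[\varphi]$ whose solution set is a strip in $\Z^2$ whose direction is a nontrivial function of both $\vec\omega$ and $\Gamma_0$ (Proposition~\ref{prop:orbit-remaining-in-partition-boundary}); e.g.\ the vertical segments (slope $\infty$) produce the strip direction $(1,\varphi+3)$. Without this computation your argument cannot rule out other slopes, nor even produce the three irrational slopes in the statement.

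Two smaller points. First, your lower bound for the irrational slopes is only a search heuristic (``look for analogous bi-infinite strips''); in the paper the existence of the witnessing pairs $x\neq y$ comes for free from the fiber structure of the factor map (fibers over $\Delta_{\Pcal_0,R_0}$ are non-singletons, Lemma~\ref{lem:non-singleton-fibers}), combined with the same strip computation, so no separate combinatorial construction is needed. Second, one must also handle the orbit of the origin, where segments of all four slopes meet; the paper checks that this orbit only produces a union of the four strips already found and hence no new directions. Your desubstitution alternative is not developed enough to assess, but as written the toral argument is the one carrying the weight, and it needs the return-time computation rather than the parallelism claim.
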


While slope 0 is not a surprise, the other slopes are irrational and their values are unexpected.
In particular, we show that there is a link between the slopes that appear in the Markov partition
provided in \cite{Lab1} and studied more deeply in \cite{Labb-Rauzy-2021}
and the slopes of nonexpansive directions, but the relation is not equality.
This contrasts with well-known cases like Penrose tilings where the symmetry of
the tilings hides a more complex relation.
More precisely, we show that slopes of nonexpansive directions within
Jeandel-Rao Wang shift are related to slopes that appear in the associated Markov
partition according to the following table (see
Proposition~\ref{prop:orbit-remaining-in-partition-boundary}):
\begin{center}
    \begin{tabular}{c|c}
    slope in the Markov Partition & slope of associated nonexpansive direction\\
    \hline
    0           & $0$\\
    $\infty$    & $\varphi + 3$\\
    $\varphi$   & $-3\varphi+2$\\
    $\varphi^2$ & $-\varphi+\frac{5}{2}$\\
\end{tabular}
\end{center}
The three other nonexpansive directions are illustrated in 
\Cref{fig:other-3-Conway-worms-in-JR}
and \Cref{fig:all-Conway-worms-in-JR-in-one-image}.
In Theorem~\ref{thm:resolution} stated in \Cref{sec:resolution}, 
we describe the Conway worms associated to each of the nonexpansive directions
in the Jeandel-Rao Wang shift as well as their resolutions.

\begin{figure}
\begin{center}
\begin{tabular}{c|c}
  \includegraphics[width=.47\linewidth]{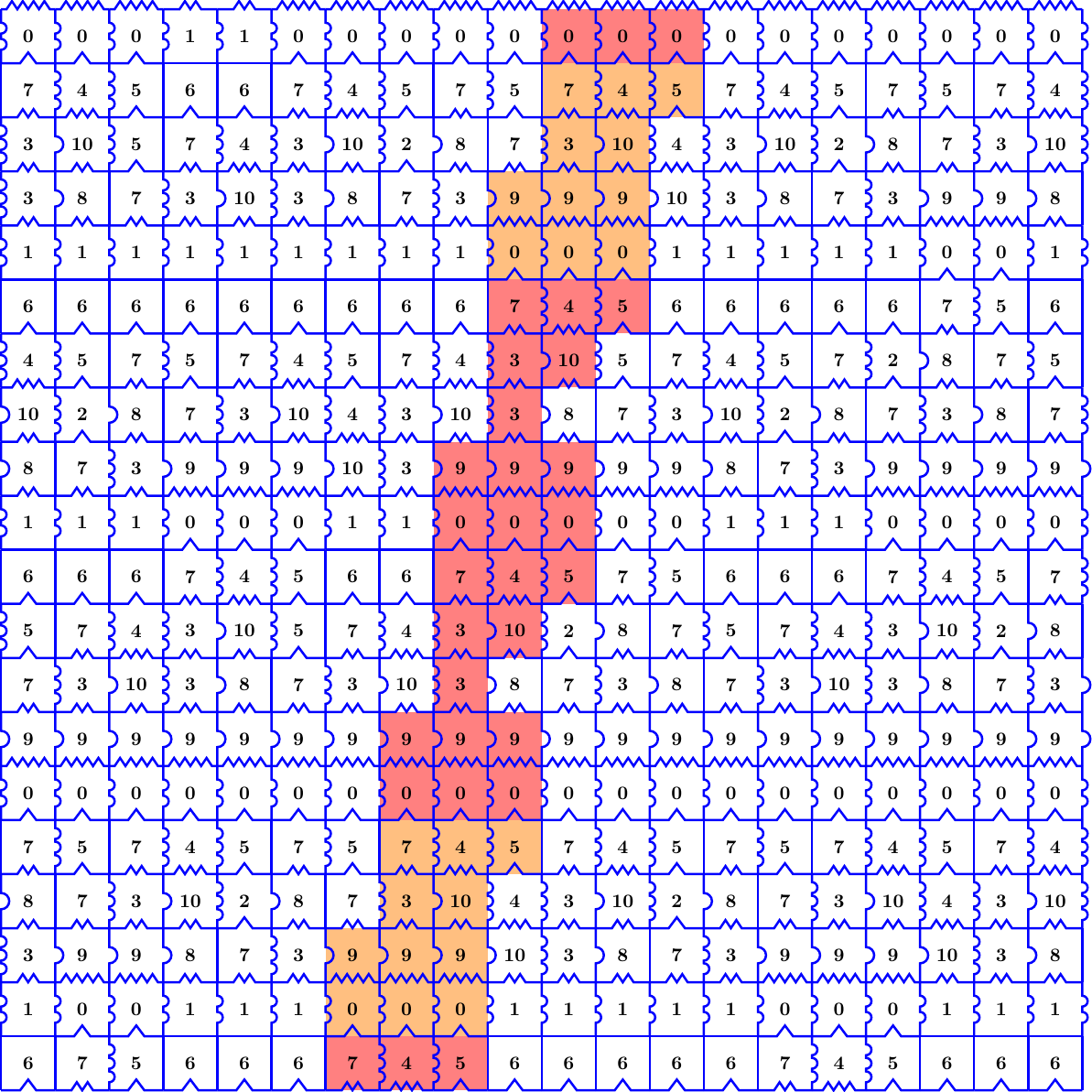}
& \includegraphics[width=.47\linewidth]{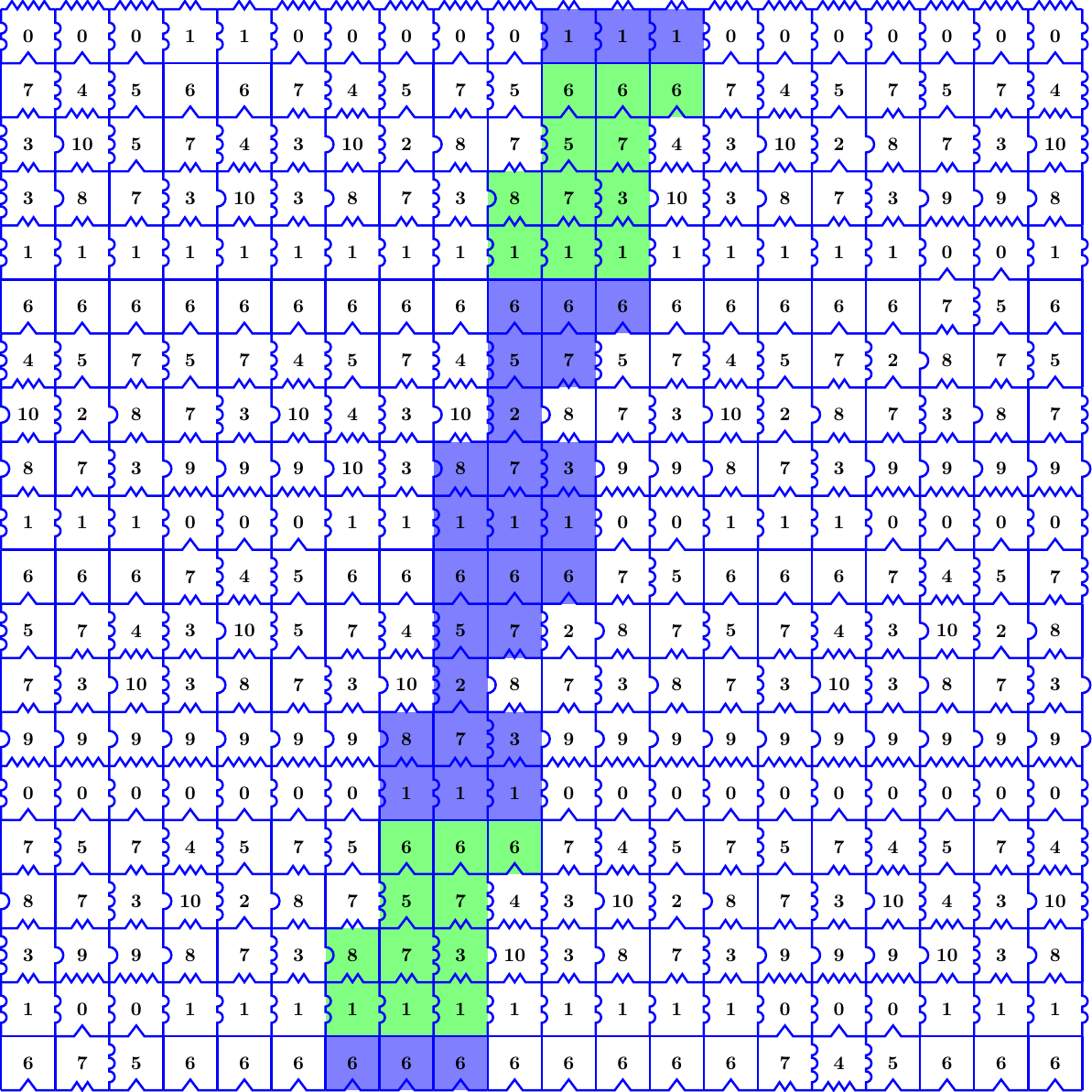}\\[2mm]
  \includegraphics[width=.47\linewidth]{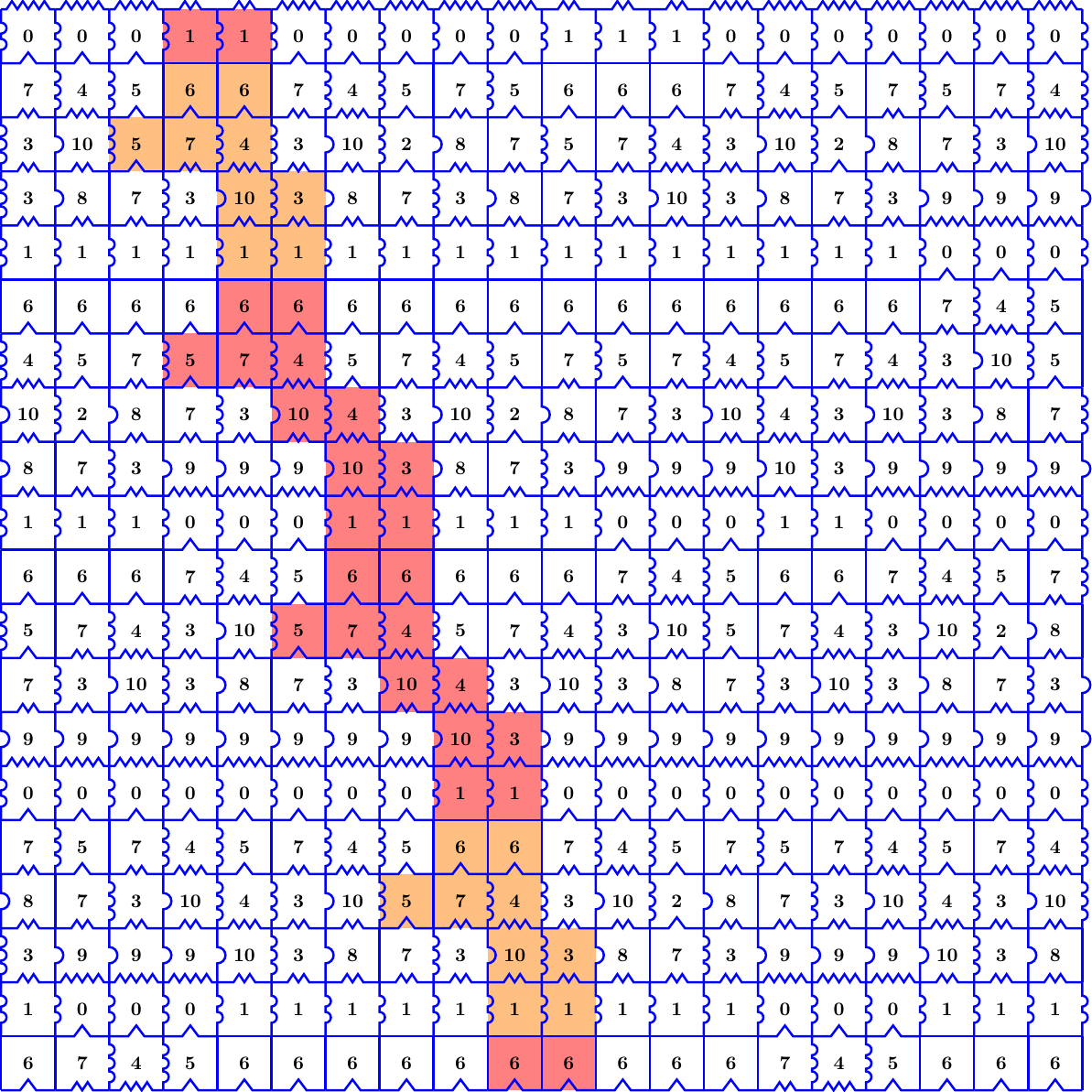}
& \includegraphics[width=.47\linewidth]{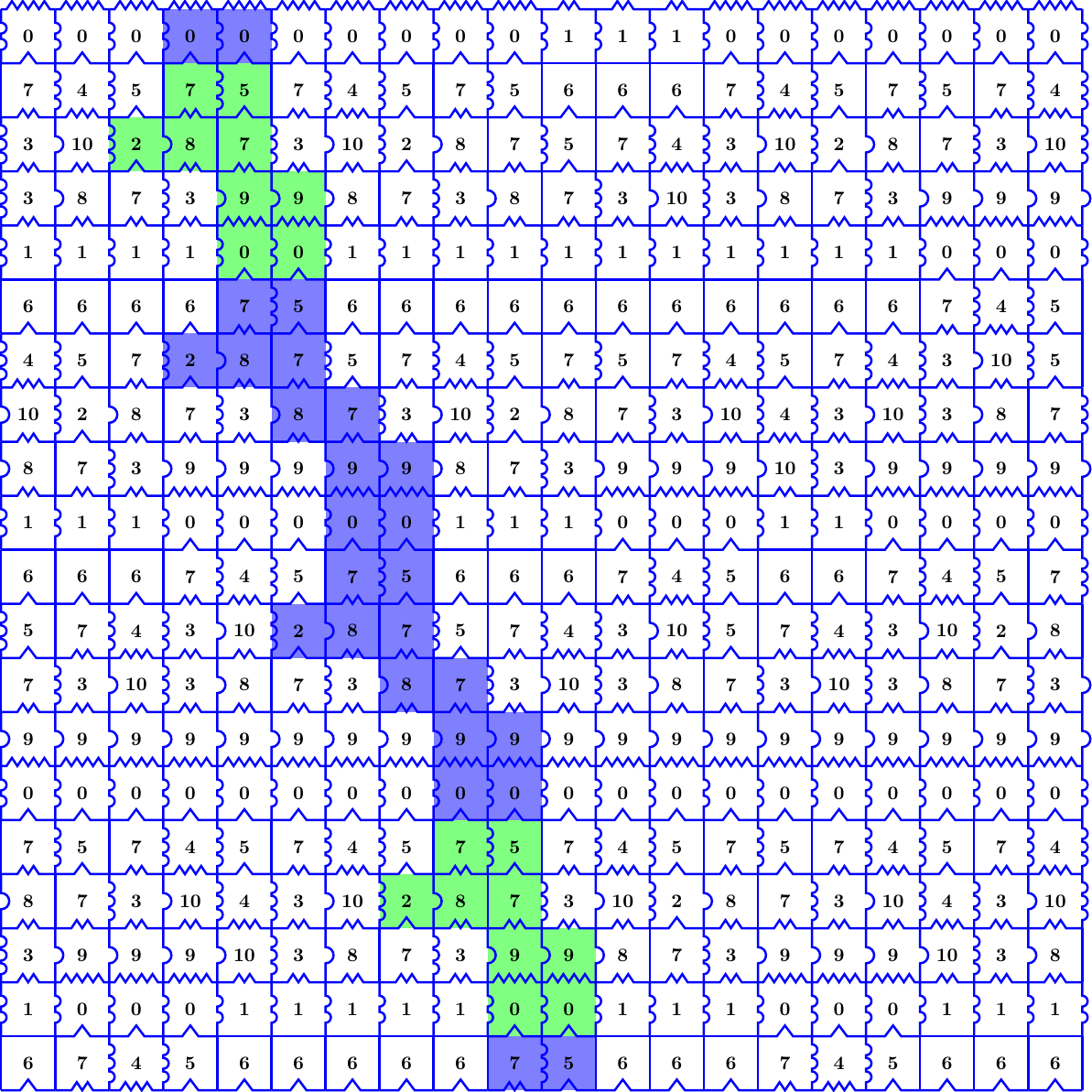}\\[2mm]
  \includegraphics[width=.47\linewidth]{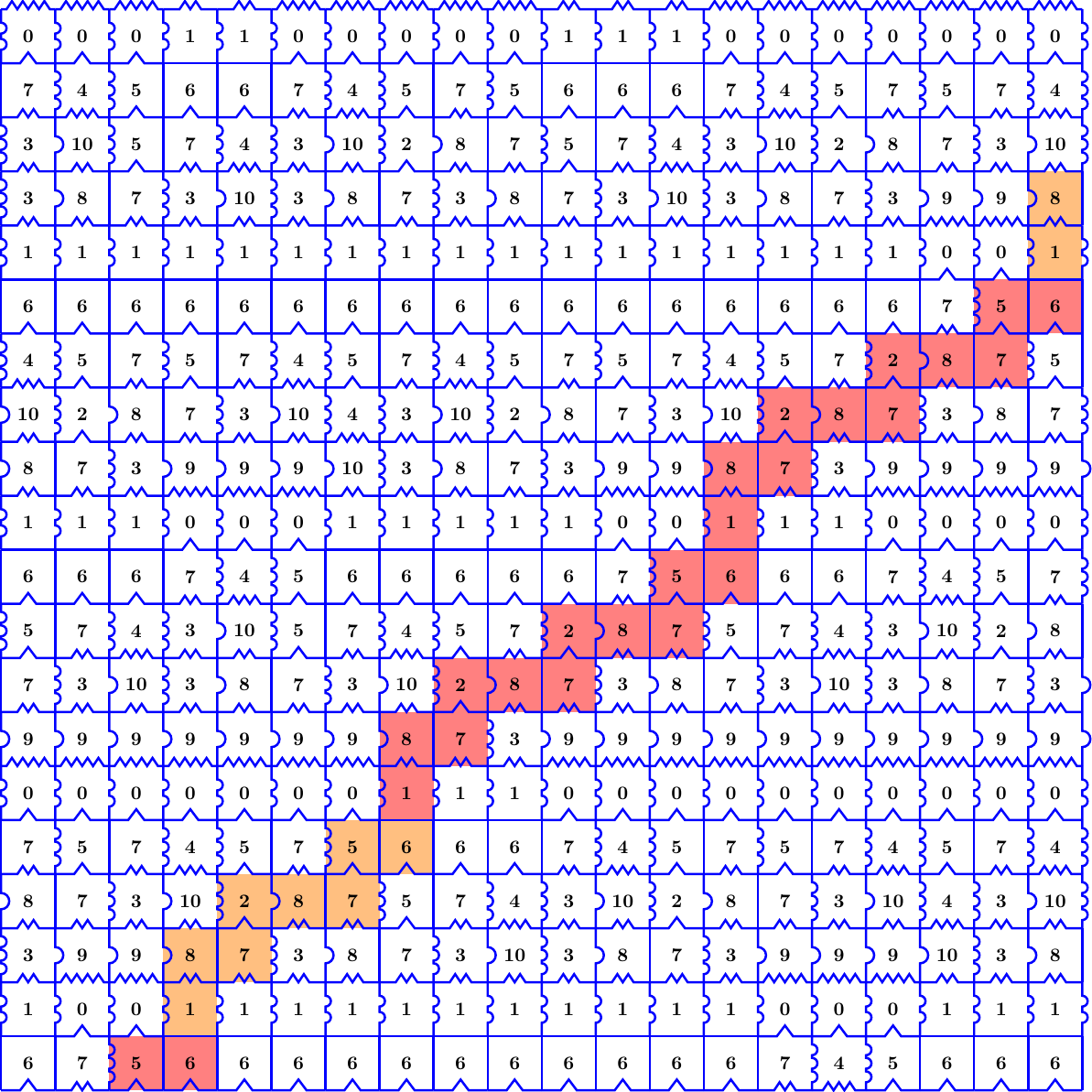}
& \includegraphics[width=.47\linewidth]{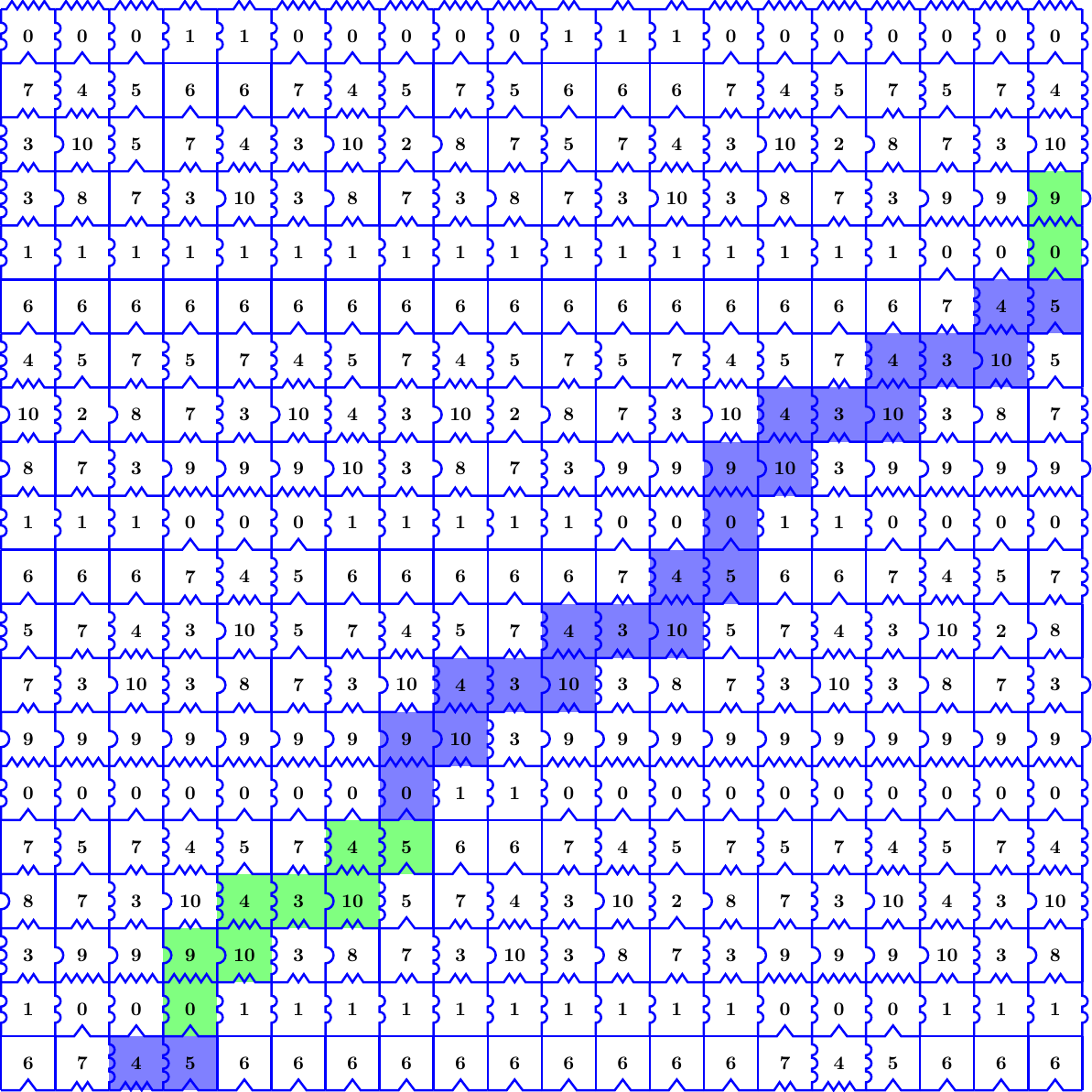}
\end{tabular}
\end{center}
    \caption{Tilings of a $20\times20$ square illustrating the Conway worms 
    of slope $\varphi+3$, $-3\varphi+2$ and $-\varphi+\frac{5}{2}$.
    The difference between the left and the right images is shown with a colored
    background.}
    \label{fig:other-3-Conway-worms-in-JR}
\end{figure}

\begin{figure}
\begin{center}
\begin{tabular}{c}
  \includegraphics[width=.72\linewidth]{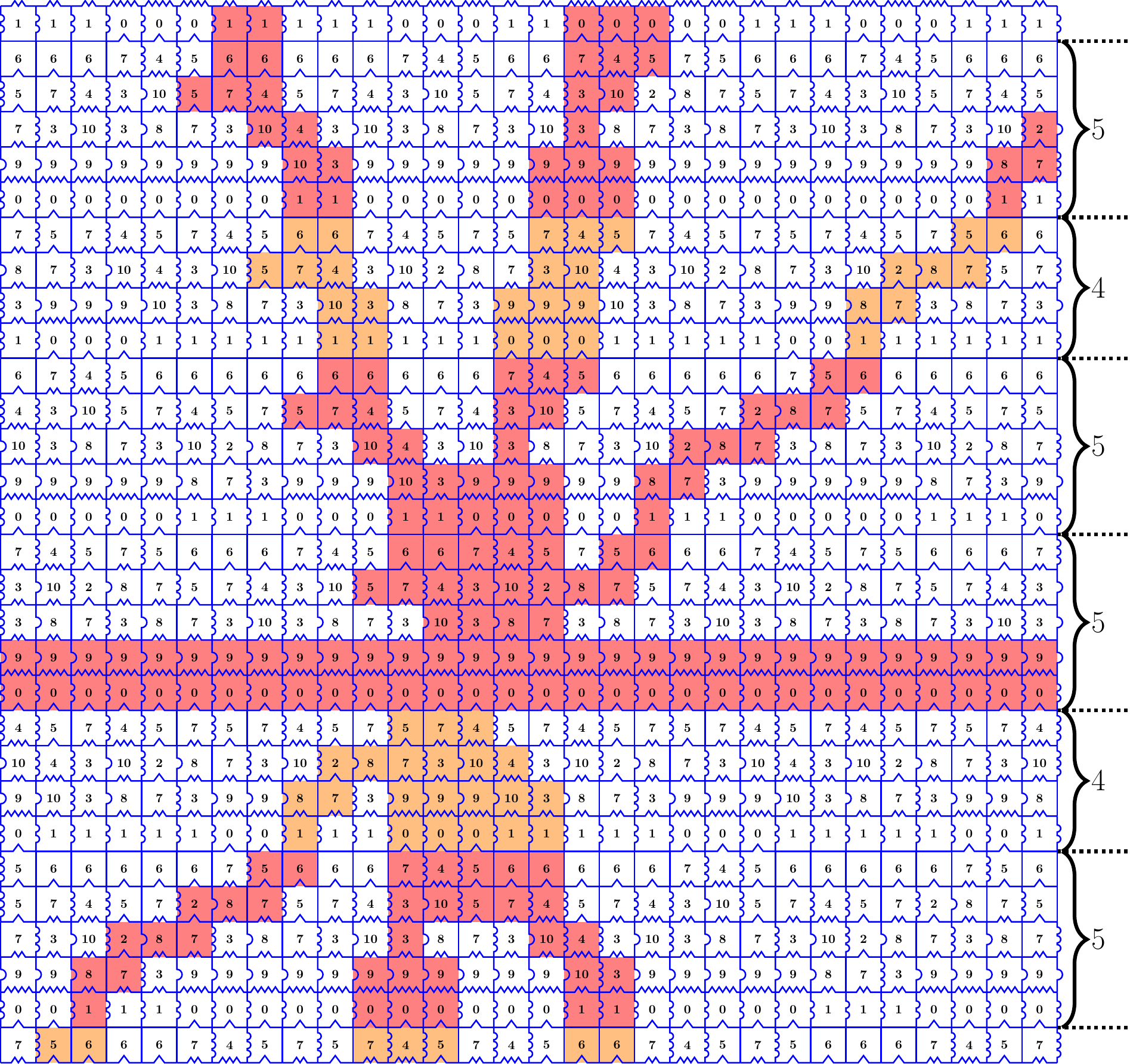}\\
  \includegraphics[width=.72\linewidth]{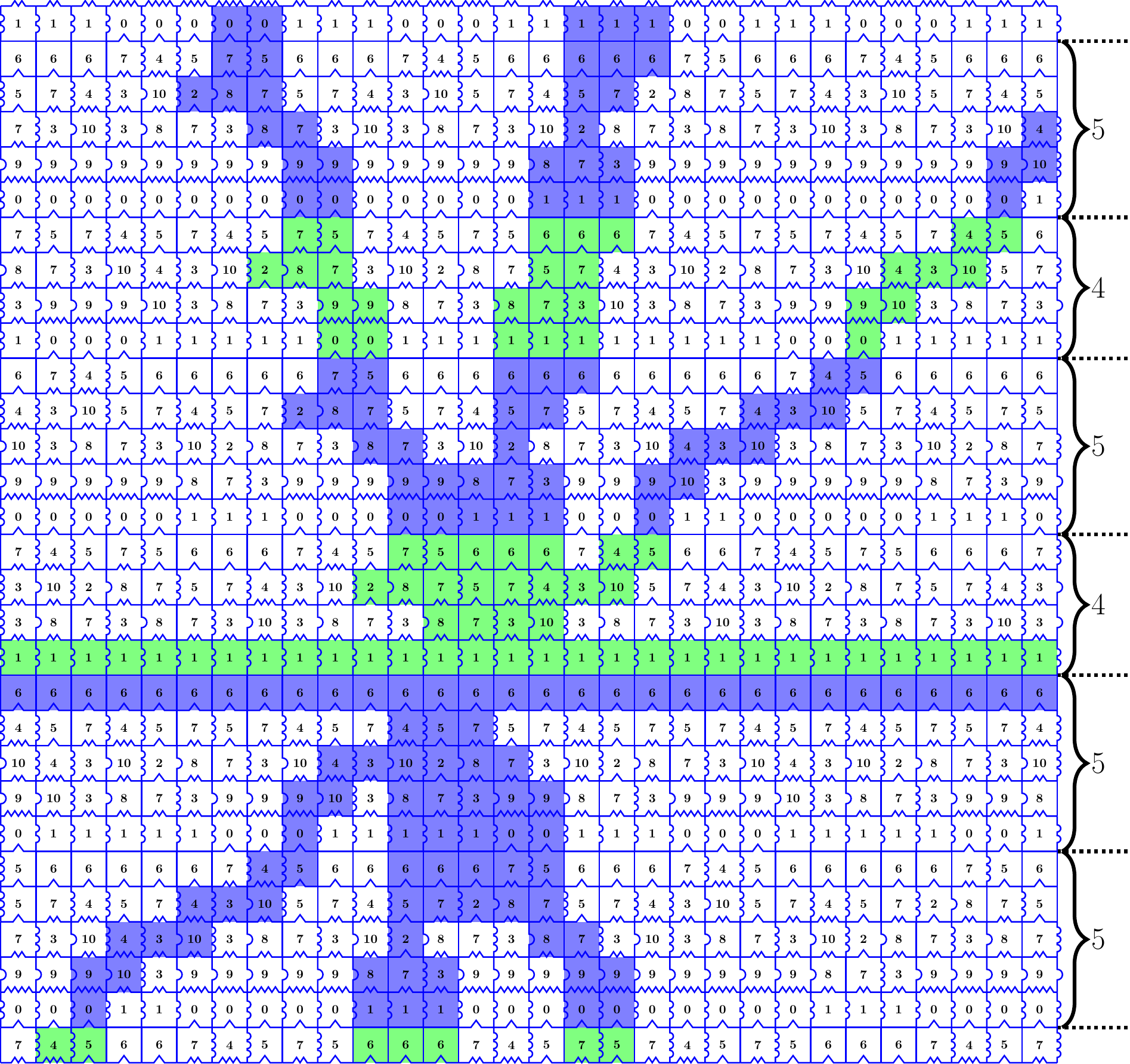}
\end{tabular}
\end{center}
    \caption{Tilings of a $30\times30$ square illustrating the four Conway worms.
    The difference between both images is shown with a colored background.
    This reminds of the cartwheel tiling in the context of Penrose tilings
    \cite[Figure 10.5.1 (c)]{GS1}.}
    \label{fig:all-Conway-worms-in-JR-in-one-image}
\end{figure}

\subsection*{Structure of the article }

In \Cref{section:dyn_sys}, we present notions from topological dynamical
systems.
In \Cref{sec:Nonexpansive-in-SDS}, we consider
nonexpansive directions in the context of minimal symbolic dynamical system 
corresponding to a $\Z^2$-action on a 2-dimensional torus and a partition of this torus.
The computation of the nonexpansive directions 
is reduced to the computations of sub-orbits under the $\Z^2$-action 
which remain in the boundary of the partition.
In \Cref{sec:jeandel-rao-wang-shift}, we recall previous results on
the Jeandel-Rao Wang shift associated to some particular polygonal partition of
a 2-dimensional torus.
In \Cref{sec:calculating}, we compute the slopes of nonexpansive directions
in the Jeandel-Rao Wang shift from the slopes of appearing in the polygonal partition.
In \Cref{sec:resolution}, we express
the resolution of the Conway worms in the Jeandel-Rao Wang shift
in terms of sequences in the Fibonacci subshift.
In \Cref{sec:octopods}, we propose open questions about octopods and essential
holes within the Jeandel-Rao Wang shift similarly to the 62 decapods known for
Penrose tilings.

\subsection*{Acknowledgments}
We are thankful to the reviewer for their valuable comments leading to a
improved presentation of the results. This work was supported by the Agence Nationale de la Recherche through the
projects ANR CODYS (ANR-18-CE40-0007) and ANR IZES (ANR-22-CE40-0011). The third author acknowledges support from the PIMS Europe Fellowship. The second and third author acknowledge support from Idex Bordeaux Visiting Scholars program.

\section{Topological Dynamical Systems}\label{section:dyn_sys}
We begin by describing a useful framework for understanding the Jeandel-Rao shift as a dynamical system. Most of the notions introduced here can be found in \cite{WaltersPeter1982Aite}. A \emph{dynamical system} is a triple $(X, G, T)$, where $X$ is a topological space, $G$ is a topological group and $T$ is a continuous function $G \times X \rightarrow X$ defining a left action of $G$ on $X$: if $x \in X$, $e$ is the identity element of $G$, and $g, h \in G$, then using additive notation for the operation in $G$ we have $T(e, x) = x$ and $T(g + h, x) = T(g, T(h, x))$. In other words, if one denotes the transformation $x \mapsto T(g, x)$ by $T^g$, then $T^{g+h} = T^gT^h$. The \emph{orbit} of a point $x \in X$ under the left action of $G$ by $T$ is the set $\mathcal{O}_T(x) = \{T^g(x):g \in G\}$.

If $Y \subset X$, let $\overline{Y}$ denote the topological closure of $Y$ and define the \emph{orbit} of $Y$ as $\mathcal{O}_T(Y) = \cup_{y \in Y}\mathcal{O}_T(y)$.  A subset $Y \subset X$ is $T$\emph{-invariant} if $\mathcal{O}_T(Y) = Y$. A dynamical system $(X, G, T)$ is called \emph{minimal} if $X$ does not contain any nonempty, proper, closed $T$-invariant subset. The left action of $G$ on $X$ is \emph{free} if $g = e$ whenever there exists $x \in X$ such that $T^g(x) = x$. 

Let $(X, G, T)$ and $(Y, G, S)$ be two dynamical systems with the same topological group $G$. A \emph{homomorphism} $\theta \!\!:\!\! (X, G, T) \rightarrow (Y, G, S)$ is a continuous function $\theta \!\!:\!\! X \rightarrow Y$ satisfying the commuting property that $T^g \circ \theta = \theta \circ S^g$ or every $g \in G$. A homomorphism $\theta \!\!:\!\! (X, G, T) \rightarrow (Y, G, S)$ is called an \emph{embedding} if it is one-to-one, a \emph{factor map} if it is onto, and a \emph{topological conjugacy} if it is both one-to-one and onto and its inverse map is continuous. If $\theta \!:\! (X, G, T) \rightarrow (Y, G, S)$ is a factor map, then $(Y, G, S)$ is called a \emph{factor} of $(X, G, T)$ and $(X, G, T)$ is called an \emph{extension} of $(Y, G, S)$. Two dynamical systems are \emph{topologically conjugate} if there is a topological conjugacy between them.

Let $\theta:(X,G,T)\to(Y,G,S)$ be a factor map.
We call the preimage set $\theta^{-1}(y)$ of a point $y\in Y$ the \emph{fiber} of $\theta$ over $y$.
The \emph{set of fiber cardinalities}
is the set $\{\card(\theta^{-1}(y)) : y \in Y\}\subset\N\cup\{\infty\}$, see \cite{MR1877329}.
Note that different terminology is used in \cite{MR1355301} as
the set of fiber cardinalities of a factor map is called \emph{thickness spectrum}
and
its supremum is called \emph{thickness} whereas
the supremum is called \emph{maximum rank} in \cite{MR3381481}.

\subsection{Maximal equicontinuous factor}
A metrizable dynamical system $(X,G,T)$ is called \emph{equicontinuous} if
the family of homeomorphisms $\{T^g\}_{g\in G}$ is equicontinuous, i.e., if for
all $\varepsilon>0$ there exists $\delta>0$ such that
\[
    \dist(T^g(x), T^g(y)) < \varepsilon
\]
for all $g\in G$ and all $x,y\in X$ with $\dist(x,y)<\delta$.
According to a well-known theorem~\cite[Theorem 3.2]{MR3381481},
equicontinuous
minimal systems defined by the action of an Abelian group
are rotations on groups.

We say that $\theta:(X,G,T)\to(Y,G,S)$ is an \emph{equicontinuous factor} if
$\theta$ is a factor map and $(Y,G,S)$ is equicontinuous.
We say that $(X_{\rm max}, G, T_{\rm max})$ is the \emph{maximal equicontinuous
factor} of $(X,G,T)$ if 
there exists an equicontinuous factor
$\pi_{\rm max}:(X,G,T)\to(X_{\rm max}, G, T_{\rm max})$,
such that for any
equicontinuous factor $\theta:(X,G,T)\to(Y,G,S)$,
there exists a unique factor map $\psi:(X_{\rm max}, G, T_{\rm max})\to(Y,G,S)$
with $\psi\circ\pi_{\rm max}=\theta$.
The maximal equicontinuous factor exists and is unique (up to topological
conjugacy), see \cite[Theorem 3.8]{MR3381481} and \cite[Theorem 2.44]{MR2041676}.

The maximal equicontinuous factor $f:(X,G,T)\to(X_{\rm max},G,T_{\rm max})$
defines an equivalence relation on the elements $a,b\in X$ as $a\equiv b$ if and only if
$f(a)=f(b)$.
A theorem of Auslander says that the equivalence relation 
is described by regionally proximal pairs.
Two elements $x, y \in X$ are said to be \emph{regionally proximal}
if there are sequences of elements $x_i,y_i\in X$ and a sequence of elements
$g_i\in G$ such that $\lim_{i\to\infty}x_i=x$, $\lim_{i\to\infty}y_i=y$ and 
$\lim_{i\to\infty} \dist(g_i x_i, g_iy_i)=0$.

\begin{theorem}\label{thm:auslander}
\cite[p.130]{MR956049} 
If $(X,G,T)$ is minimal and
    $f:(X,G,T)\to(X_{\rm max},G,T_{\rm max})$
is its maximal equicontinuous factor,
then $f(a)=f(b)$ if and only if $a$ and $b$ are regionally proximal.
\end{theorem}

	\subsection{Subshifts and Subshifts of Finite Type}\label{subsec:shifts}
	
Here we follow the notation of \cite{10.1007/978-1-4613-0165-3_3}. Let $\mathcal{A}$ be a finite set, $d \geq 1$, and let $\mathcal{A}^{\ZZ^d}$ be the set of all maps $x : \ZZ^d \rightarrow \mathcal{A}$, equipped with the compact product topology. An element $x \in \mathcal{A}^{\ZZ^d}$ is called \emph{configuration} and we write it as $x = (x_{\boldsymbol{m}}) = (x_{\boldsymbol{m}} :\boldsymbol{m} \in \ZZ^d)$, where $x_{\boldsymbol{m}} \in \mathcal{A}$ denotes the value of $x$ at $\boldsymbol{m}$. The topology on $\mathcal{A}^{\ZZ^d}$ is compatible with the metric defined for all configurations $x, x' \in \mathcal{A}^{\ZZ^d}$ by $\text{dist}(x, x') = 2^{-\min\{\|\boldsymbol{n}\|:x_{\boldsymbol{n}} \neq x'_{\boldsymbol{n}}\}}$ where $\|\boldsymbol{n}\| = |n_1| + \cdots + |n_d|$. The \emph{shift action} $\sigma :  \boldsymbol{n} \mapsto \sigma^{\boldsymbol{n}}$ of $\ZZ^d$ on $\mathcal{A}^{\ZZ^d}$ is defined by \begin{equation} (\sigma^{\boldsymbol{n}}(x))_{\boldsymbol{m}} = x_{\boldsymbol{m}+\boldsymbol{n}} \label{eq:shift_action}\end{equation} for every $x = (x_{\boldsymbol{m}}) \in \mathcal{A}^{\ZZ^d}$ and $\boldsymbol{n} \in \ZZ^d$. A subset $X \in \mathcal{A}^{\ZZ^d}$ is \emph{shift-invariant} if $\sigma(X) = X$ and a closed, shift-invariant subset $X \subset \mathcal{A}^{\ZZ^d}$ is a \emph{subshift}. If $X \subset \mathcal{A}^{\ZZ^d}$ is a subshift, we write $\sigma = \sigma^X$ for the restriction of the shift action \eqref{eq:shift_action} to $X$. When $X$ is a subshift, the triple $(X, \ZZ^d, \sigma)$ is a dynamical system.

A configuration $x \in X$ is \emph{periodic} if there is a nonzero vector $\boldsymbol{n} \in \ZZ^d \setminus \{0\}$ such that $x = \sigma^{\textbf{n}}(x)$ and otherwise it is said \emph{nonperiodic}. We say that a nonempty subshift $X$ is \emph{aperiodic} if the shift action $\sigma$ on $X$ is free. Note that our definition of aperiodicity of Wang tile protosets given in Section~\ref{sec:introduction} agrees with this more general definition of aperiodicity if we take $\mathcal{A} = \mathcal{T}$, $d = 2$, and the shift action on $\mathcal{T}^{\ZZ^2}$ being $\ZZ^2$ translation.

For any subset $S \subset \ZZ^d$, let $\pi_S : \mathcal{A}^{\ZZ^d} \rightarrow \mathcal{A}^{S}$ denote the projection map which restricts every $x \in \mathcal{A}^{\ZZ^d}$ to $S$. A pattern is a function $p \in \mathcal{A}^{S}$ for some finite subset $S \subset \ZZ^d$. To every pattern $p \in \mathcal{A}^{\ZZ^d}$ corresponds a subset $\pi_{S}^{-1}(p) \subset \mathcal{A}^{\ZZ^d}$ called \emph{cylinder}. A subshift $X \subset \mathcal{A}^{\ZZ^d}$ is a \emph{shift of finite type} (SFT) if there exists a finite set $\mathcal{F}$ of \emph{forbidden patterns} such that \begin{equation} X = \{x \in \mathcal{A}^{\ZZ^d} \,|\, \pi_S \circ \sigma^{\boldsymbol{n}}(x) \notin \mathcal{F} \text{ for all } \boldsymbol{n} \in \ZZ^d \text{ and } S \subset \ZZ^d\} \label{eqn:SFT}\end{equation} In this case, we write $X = SFT(\mathcal{F})$. In this article, we consider shifts of finite type on $\ZZ \times \ZZ$; that is, the case $d = 2$.
    Wang shifts, as defined in the introduction, are shifts of finite type on $\ZZ^2$.

\subsection{Nonexpansive directions}

The following definitions are taken from \cite[\S 2]{colle_nivats_2019}. Let $F$ be a subspace of $\RR^d$. For each $g\in\ZZd$, let $\dist(g,F)=\inf\{\Vert g-u\Vert\colon u\in F\}$, where $\Vert\cdot\Vert$ is the Euclidean norm in $\RR^d$. Given $t>0$, the \emph{$t$-neighborhood} of $F$ is defined by $F^t:=\{g\in\ZZ^d\colon\dist(g,F)\leq t\}$. Let $X\subset\Acal^\ZZd$ be a subshift. Following Boyle and Lind \cite{MR1355295}, we say that a subspace $F\subset\RR^d$ is \emph{expansive} on $X$ if there exists $t>0$ such that for any $x,y \in X$, $x|_{F^t}=y|_{F^t}$ implies that $x=y$. Thus, a subspace $F$ is \emph{nonexpansive} if for all $t > 0$, there exist $x,y \in X$ such that $x|_{F^t} = y|_{F^t}$ but $x \neq y$. 

Additionally, we see that if $F$ is expansive, then every translate of $F$ is expansive since $F^t \subset (F + \boldsymbol{v})^{t + \norm{\boldsymbol{v}}}$ for any $\boldsymbol{v} \in \RR^d$.   Thus, in the 2-dimensional case, which will be the focus of this article, we may refer to \emph{nonexpansive directions}. 

Equivalently, the notion of expansiveness can be defined on half-spaces in
$\RR^d$ rather than codimension one subspaces \cite[\S 2]{MR1869066}. 
Let $\mathsf{S}_{d-1}=\{\bv\in\RR^d\colon\Vert\bv\Vert=1\}$ be the unit $(d-1)$-sphere.
For $\bv\in \mathsf{S}_{d-1}$ define $H_\bv=\{\bx\in\RR^d\colon \langle\bx,\bv\rangle\leq0\}$ to be the 
half-space with outward unit normal $\bv$. Let $\mathsf{H}_d$ be the set of half-spaces in $\RR^d$,
which are identified with $\mathsf{S}_{d-1}$ via the parametrization $\bv\leftrightarrow H_\bv$.
For $H\in\mathsf{H}_d$, we denote its outward unit normal vector by $\bv_H$.
Let $\sigma$ be a $\Z^d$-action on the subshift $X$.
We say that a half-space $H\in\mathsf{H}_d$ is \emph{nonexpansive} for $\sigma$ if there exist
$x,y \in X$ such that $x|_{\Z^d\cap H} = y|_{\Z^d\cap H}$ but $x \neq y$.

\begin{lemma}{\rm\cite[Lemma 2.9]{MR1869066}}
    \label{lem:half-space-is-good}
    Let $\sigma$ be a $\Z^d$-action and $V$ be a codimension 1 subspace of $\R^d$.
    Then $V$ is nonexpansive for $\sigma$ if and only if
    there is a half-space $H\in\mathsf{H}_d$ 
    which is nonexpansive for $\sigma$ with $\partial H=V$.
\end{lemma}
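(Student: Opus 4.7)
My plan is to prove the two implications separately, treating the easier direction first.

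For the reverse direction, assume $H$ is a nonexpansive half-space with $\partial H=V$, and fix witnesses $x\neq y$ in $X$ with $x|_{\Z^d\cap H}=y|_{\Z^d\cap H}$. Given $t>0$, choose an integer vector $\bk$ with $\bk\cdot\bv_H\leq -t$; such $\bk$ exists because $\bv_H\neq 0$ (take a sufficiently large multiple of $\pm\be_i$ for some coordinate where $\bv_H$ is nonzero). For every $\bm\in V^t$, one has $(\bm+\bk)\cdot\bv_H\leq t+(-t)=0$, so $\bm+\bk\in H$ and hence $(\sigma^\bk x)_\bm=(\sigma^\bk y)_\bm$. The shifted pair remains distinct because $\sigma^\bk$ is a bijection on configurations. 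Since $t$ was arbitrary, $V$ is nonexpansive.

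For the forward direction, assume $V$ is nonexpansive. By translation invariance, assume $V$ passes through the origin, so $V=\{\bx\in\R^d:\bx\cdot\bv=0\}$ for a unit vector $\bv$ and $V^t=\{\bm\in\Z^d:|\bm\cdot\bv|\leq t\}$. For each $n\geq 1$ pick $x_n\neq y_n$ in $X$ with $x_n|_{V^n}=y_n|_{V^n}$, and let $D_n=\{\bm\in\Z^d:(x_n)_\bm\neq(y_n)_\bm\}$, which is nonempty and contained in $\{|\bm\cdot\bv|>n\}$. Pick $\bm_n\in D_n$ with $|\bm_n\cdot\bv|$ within $1$ of $\inf_{\bm\in D_n}|\bm\cdot\bv|$; by passing to a subsequence assume the sign of $\bm_n\cdot\bv$ is constant, say negative, and set $M_n\isdef-\bm_n\cdot\bv>n$. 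Define $x'_n\isdef\sigma^{\bm_n}(x_n)$ and $y'_n\isdef\sigma^{\bm_n}(y_n)$: this centers the disagreement at the origin, and a direct translation of the two constraints shows that $(x'_n,y'_n)$ agree on the slab $\{\bk:M_n-n\leq\bk\cdot\bv\leq M_n+n\}$ and can disagree only in $\{\bk\cdot\bv\leq 1\}\cup\{\bk\cdot\bv\geq 2M_n-1\}$.

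By compactness of $X\times X$, a subsequence of $(x'_n,y'_n)$ converges to some $(x,y)$. The disagreement at $\zero$ persists in the limit so $x\neq y$. Since $M_n\to\infty$, any $\bk\in\Z^d$ with $\bk\cdot\bv>1$ eventually satisfies $1<\bk\cdot\bv<2M_n-1$, placing $\bk$ in the agreement slab, so $x_\bk=y_\bk$. Hence $x$ and $y$ agree on the open half-space $\{\bk\in\Z^d:\bk\cdot\bv>1\}$. A final translation by an integer $\bn$ with $\bn\cdot\bv>1$ produces a pair $(\sigma^\bn x,\sigma^\bn y)$ that still differs (now at $-\bn$) and agrees on all of $\{\bk\in\Z^d:\bk\cdot\bv\geq 0\}=\Z^d\cap H_-$, where $H_-\isdef\{\bx\in\R^d:\bx\cdot\bv\geq 0\}$ is a closed half-space with boundary $V$. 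This shows $H_-$ is nonexpansive, completing the implication.

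The main obstacle is the compactness step in the forward direction: preserving the distinctness of the limit pair forces the disagreement to be centered via the shift by $\bm_n$, and making the agreement region swell to an entire open half-space requires the near-minimality of $|\bm_n\cdot\bv|$. Without this control, the shifted agreement slab could drift off to infinity along $\bv$ rather than filling out a half-space in the limit, and distinctness could vanish. The small extra translation by $\bn$ is the mild but necessary upgrade from agreement on the open half-space to agreement on the closed one $\Z^d\cap H_-$.
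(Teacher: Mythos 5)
Your proof is correct. The paper does not actually prove this lemma---it only cites \cite[Lemma 2.9]{MR1869066}---and your argument (recentering each disagreement by a shift chosen nearly minimal in the $\bv$-coordinate, extracting a limit pair by compactness of $X\times X$, then translating once more to turn agreement on an affine half-space into agreement on one bounded by $V$ itself) is essentially the standard proof of the cited result; the only nitpick is the phrase ``placing $\bk$ in the agreement slab,'' where you really mean placing $\bk$ outside the possible-disagreement region, which is what your containment $\{\bk\cdot\bv\leq 1\}\cup\{\bk\cdot\bv\geq 2M_n-1\}$ delivers.
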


Thus if $F$ is a nonexpansive codimension 1 subspace for a subshift $X$, then,
there exist $x,y\in X$ such that $x|_{\Z^d\cap H} = y|_{\Z^d\cap H}$ but $x\neq
y$ where $H$ is the half-space on one side of the space $F$.

The next lemma shows that the set of nonexpansive directions of a subshift (for
instance the one computed in \Cref{thm:main-theorem}) is a topological
invariant.

\begin{lemma}
    Let
    $(X,\Z^d,f)$
    and $(Y,\Z^d,g)$ be two topologically conjugate subshifts
    and $F\subset\R^d$ be a codimension 1 subspace.
    If $F$ is a nonexpansive in $X$, then $F$ is nonexpansive in $Y$.
    \label{lem:top_invar}
\end{lemma}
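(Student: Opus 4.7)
The plan is to exploit the Curtis--Hedlund--Lyndon characterization: any topological conjugacy $\theta:X\to Y$ between $\Z^d$-subshifts over finite alphabets is a sliding block code. Concretely, since $Y$ is a subshift of $\Acal_Y^{\Z^d}$ and $\theta$ is continuous, for each $x\in X$ there is an integer $r(x)\ge0$ such that $x'|_{B_{r(x)}(0)}=x|_{B_{r(x)}(0)}$ implies $\theta(x')_{\zero}=\theta(x)_{\zero}$, where $B_r(0)=\{\bn\in\Z^d:\|\bn\|\le r\}$. Compactness of $X$ yields a uniform $r\ge0$ that works for all $x\in X$. Combining this with shift-equivariance $g^\bk\circ\theta=\theta\circ f^\bk$ gives the sliding-block property
\[
    x|_{\bk+B_r(\zero)} = x'|_{\bk+B_r(\zero)}
    \;\Longrightarrow\; \theta(x)_\bk = \theta(x')_\bk
    \quad\text{for every } \bk\in\Z^d.
\]

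Given $t>0$, I would then invoke the nonexpansiveness of $F$ in $X$ at scale $t+r$ to produce $x,y\in X$ with $x|_{F^{t+r}}=y|_{F^{t+r}}$ and $x\neq y$. For any $\bk\in F^t$ and any $\bn\in B_r(\zero)$, the triangle inequality gives $\dist(\bk+\bn,F)\le\dist(\bk,F)+\|\bn\|\le t+r$, so $\bk+B_r(\zero)\subset F^{t+r}$. Hence $x$ and $y$ agree on each such translate, and the sliding-block property yields $\theta(x)_\bk=\theta(y)_\bk$ for every $\bk\in F^t$. Thus $\theta(x)|_{F^t}=\theta(y)|_{F^t}$, while injectivity of $\theta$ forces $\theta(x)\neq\theta(y)$. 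Since $t>0$ is arbitrary, $F$ is nonexpansive in $Y$.

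The argument is essentially a bookkeeping exercise once the sliding-block structure is in hand; the only mildly delicate point is passing from pointwise continuity to a single uniform block-code radius $r$, which relies on compactness of $X$. Note that the conclusion of \Cref{lem:top_invar} is symmetric in $X$ and $Y$ by applying the same argument to $\theta^{-1}$, so the set of nonexpansive codimension-$1$ subspaces is indeed a topological-conjugacy invariant, as claimed in the remark preceding the lemma.
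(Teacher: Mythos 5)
Your proof is correct, and it takes a genuinely different (though closely related) route from the paper's. The paper first reduces to nonexpansive half-spaces via \Cref{lem:half-space-is-good}, then runs a metric argument: it extracts an expansivity constant $\delta$ for the shift on $Y$, a modulus $\delta'$ from the (uniform) continuity of $\theta$, observes that two configurations agreeing on a half-space $H$ have $d_X(f^{\bn}x,f^{\bn}y)<\delta'$ for all $\bn$ sufficiently deep in $H$, concludes $\theta(x)_\bn=\theta(y)_\bn$ there, and finally applies a shift $g^{\bm}$ to move the region of agreement so that it covers $H\cap\Z^d$ exactly. You instead package the same two ingredients (uniform continuity of $\theta$ plus expansivity of the shift on $Y$) into the Curtis--Hedlund--Lyndon theorem, obtaining a sliding block code of radius $r$, and then work directly with the $t$-neighbourhoods $F^t$ from the definition of nonexpansive subspace: agreement on $F^{t+r}$ in $X$ pushes forward to agreement on $F^t$ in $Y$. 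This buys you two things: you avoid the detour through half-spaces entirely (no appeal to \Cref{lem:half-space-is-good} and no final re-shifting step), and you get a clean quantitative statement relating the widths of the strips of agreement. The only point to be slightly careful about is the mismatch between the $\ell^1$-type norm used in the paper's metric on $\Acal^{\ZZd}$ and the Euclidean norm used in $\dist(g,F)$; since the former dominates the latter, your triangle-inequality step $\dist(\bk+\bn,F)\le\dist(\bk,F)+\norm{\bn}$ goes through, but it is worth a word. Your closing remark that the invariance is symmetric by applying the argument to $\theta^{-1}$ matches the paper's implicit use of the lemma as a two-sided invariant.
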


\begin{proof}
    Let $\theta:X\to Y$ be the topological conjugacy such that
    $\theta\circ f^\bn=g^\bn\circ\theta$ for all $\bn\in\Z^d$.
    Since $Y$ is a subshift,
    there exists $\delta>0$ such that for all $x,y\in Y$
    and for all $\bn\in\Z^d$,
    we have
    $d_Y(g^\bn(x),g^\bn(y))<\delta$ implies that $x_\bn=y_\bn$.
    From the continuity of $\theta$, there exists $\delta'>0$ such that
    for all $x,y\in X$
    we have
    $d_X(x,y)<\delta'$ implies that $d_Y(\theta(x),\theta(y))<\delta$.

    Based on Lemma~\ref{lem:half-space-is-good},
    we do the proof for nonexpansive half-spaces instead of nonexpansive
    codimension 1 spaces.
    Let $H\in\mathsf{H}_d$ be a half-space which is nonexpansive for 
    $\Z^d\overset{f}{\curvearrowright}X$. 
    Thus there exist $x,y \in X$ such that $x|_{\Z^d\cap H} = y|_{\Z^d\cap H}$
    but $x \neq y$.
    Since 
    $x$ and $y$ agree on a half-space
    and $X$ is a subshift,
    there exists $t<0$
    such that
    $d_X(f^{\bn}(x),f^{\bn}(y))<\delta'$
    for all $\bn\in\Z^d$ such that $\langle\bn,\bv_H\rangle<t$.
    Therefore
    from the continuity of $\theta$, we have
    $d_Y(\theta(f^{\bn}x),\theta(f^{\bn}y))<\delta$
    for all $\bn\in\Z^d$ such that $\langle\bn,\bv_H\rangle<t$.
    Therefore, 
    $\theta(x)_\bn= \theta(y)_\bn$
    for all $\bn\in\Z^d$ such that $\langle\bn,\bv_H\rangle<t$.
    Let $\bm\in\Z^d$ such that $\langle\bm,\bv_H\rangle<t$.
    We have
    $g^\bm(\theta(x))|_{\Z^d\cap H} = g^\bm(\theta(y))|_{\Z^d\cap H}$
    but
    $g^\bm(\theta(x))
    \neq g^\bm(\theta(y))$.
    Therefore $H$
    is nonexpansive for 
    $\Z^d\overset{g}{\curvearrowright}Y$. 
\end{proof}

Nonexpansive half-spaces (or \emph{non-deterministic directions}) are used to deduce results about the invertibility of any endomorphism of substitutive subshifts and also about the structure of its normalizer group. For more details, see \cite{cabezas2021homomorphisms}.

\subsection{Conway worms}

Adapting a terminology which was originally defined for tilings of the plane,
we define the notion of Conway worms in the context of subshifts as follows.

\begin{definition}[Conway worm]
Let $X\subset \Acal^{\Z^d}$ be a subshift.
We say that a support $S\subset\Z^d$
is a \emph{Conway worm} associated to a subspace $F$ if
there exist two configurations $x,y\in X$
such that $S=\{\bn\in\Z^d\mid x_\bn\neq y_\bn\}$
and there exists $t>0$ such that $\varnothing\neq S\subset F^t$.

Also, we say that the restriction of the configurations $x$ and $y$
to the support $S$ are two \emph{resolutions of the Conway
worm}.
\end{definition}

\begin{remark}
Observe that if $S\subset\Z^d$ is a Conway worm associated to a subspace $F$,
then $F$ is nonexpansive.
\end{remark}

In this article, we are interested in describing
the Conway worms and their resolutions in the Jeandel-Rao Wang shift.

\section{Nonexpansive directions in Symbolic dynamical systems}\label{sec:Nonexpansive-in-SDS}

\subsection{Symbolic dynamical systems}

We follow the section \cite[\S6.5]{LindDouglas1995AItS} on Markov partitions where we adapt it to the case of invertible $\ZZ^2$-actions. A \emph{topological partition} of a metric space $M$ is a finite collection $\{P_0, P_1,\ldots, P_{r-1}\}$ of disjoint open sets such that $M = \overline{P_0} \cup \overline{P_1} \cup \cdots \cup \overline{P_{r-1}}.$ Suppose that $M$ is a compact metric space, $(M, \ZZ^2,R)$ is a dynamical system, and that $\mathcal{P} = \{P_0, P_1,\ldots, P_{r-1}\}$ is a topological partition of $M$. Let $\mathcal{A} = \{0,1,\ldots,r-1\}$ and $S \subset \ZZ^2$ be a finite set. We say that a \emph{pattern} $w \in \mathcal{A}^S$ is \emph{allowed} for $\mathcal{P},R$ if 
\[
   \bigcap_{\boldsymbol{k}\in S}R^{-\boldsymbol{k}}(P_{w_{\boldsymbol{k}}}) \neq \varnothing.
\] 
Let $\Lcal_{\Pcal,R}$ be the collection of all allowed patterns for $\Pcal,R$.
The set $\Lcal_{\Pcal,R}$ is the language of a subshift 
$\Xcal_{\Pcal,R}\subseteq\Acal^{\Z^2}$ defined as follows,
see \cite[Prop.~9.2.4]{MR3525488},
\[
    \Xcal_{\Pcal,R} = 
    \{x\in\Acal^{\Z^2} \mid \pi_S\circ\sigma^\bn(x)\in\Lcal_{\Pcal,R}
    \text{ for every } \bn\in\Z^2 \text{ and finite subset } S\subset\Z^2\}.
\]

\begin{definition}
We call $\Xcal_{\Pcal,R}$ the \emph{symbolic dynamical
system} corresponding to $\Pcal,R$.
\end{definition}

For each $w \in \mathcal{X}_{\mathcal{P},R} \subset \mathcal{A}^{\ZZ^2}$ and $n \geq 0$ there is a corresponding nonempty open set 
\[D_n(w) = \bigcap_{\|\boldsymbol{k}\| \leq n} R^{-\boldsymbol{k}}(P_{w_{\boldsymbol{k}}}) \subseteq M.\] 
The closures $\overline{D}_n(w)$ of these sets are compact and decrease with $n$ in the sense that that $\overline{D_0}(w) \supseteq \overline{D_1}(w) \supseteq \overline{D_2}(w) \supseteq \cdots$. It follows that $\cap_{n = 0}^{\infty} \overline{D_n}(w) \neq \varnothing$. In order for configurations in $\mathcal{X}_{\mathcal{P},R}$ to correspond to points in $M$, this intersection should contain only one point. This leads to the following definition. 

\begin{definition} A topological partition $\mathcal{P}$ of $M$ gives a \textbf{\emph{symbolic representation}} of $(M, \ZZ^2,R)$ if for every $w \in \mathcal{X}_{\mathcal{P},R}$, the intersection $\cap_{n = 0}^{\infty}\overline{D_n}(w)$ consists of exactly one point $m \in M$. We call $w$ a \textbf{\emph{symbolic representation}} of $m$.  \label{dfn:symb_rep}\end{definition}

An important consequence of the fact that
a partition $\Pcal$ gives a symbolic representation of the dynamical system
$(M,\Z^2,R)$ is the existence of a
factor map $f:\Xcal_{\Pcal,R}\to M$ which commutes the
$\Z^2$-actions.
In the spirit of \cite[Prop.~6.5.8]{LindDouglas1995AItS} for $\Z$-actions,
we have the following proposition whose
proof can be found in \cite{Lab1}
when the compact metric space $M$ is a 2-dimensional torus.

From now one in this section, we assume that $M=\boldsymbol{T} = \RR^2 / \Gamma$
for some lattice $\Gamma$ in $\RR^2$, i.e., a discrete subgroup of the additive
group $\RR^2$ with 2 linearly independent generators,
and that $R$ is a $\ZZ^2$-rotation on $\boldsymbol{T}$. 

Let
\[
    \Delta_{\Pcal} := \bigcup_{a \in \mathcal{A}}\partial P_a 
\]
be the \emph{boundary} of $\mathcal{P}$
and
\[
    \Delta_{\mathcal{P},R} 
    = \mathcal{O}_R
    \left(\Delta_\Pcal\right)
    \subset \boldsymbol{T}
\] 
be the set of points whose orbits under the toral $\ZZ^2$-rotation $R$ intersect the boundary of the topological partition $\mathcal{P}$. We note that $\Delta_{\mathcal{P},R}$ is dense in $\boldsymbol{T}$, by the Baire Category Theorem \cite[Theorem 6.1.24]{LindDouglas1995AItS}. 

\begin{proposition}\label{prop:factor-map}
    {\rm\cite[Prop.~5.1]{Lab1}}
    Let $\Pcal$ give a symbolic representation of the dynamical system
    $(\boldsymbol{T},\Z^2,R)$ such that
    $R$ is a $\ZZ^2$-rotation on $\boldsymbol{T}$.
    Let $f:\Xcal_{\Pcal,R}\to \boldsymbol{T}$ be defined 
    such that $f(w)$ is the unique point
    in the intersection $\cap_{n=0}^{\infty}\overline{D}_n(w)$.
    The map $f$ is a factor map from
            $(\Xcal_{\Pcal,R},\Z^2,\sigma)$ to $(\boldsymbol{T},\Z^2,R)$
            such that $R^\bk\circ f = f\circ\sigma^\bk$
    for every $\bk\in\Z^2$.
    The map $f$ is one-to-one on
    $f^{-1}(\boldsymbol{T}\setminus\Delta_{\Pcal,R})$.
\end{proposition}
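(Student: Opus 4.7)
The plan is to unwind the definitions and mirror the classical argument of \cite[Prop.~6.5.8]{LindDouglas1995AItS} while adapting it to the $\Z^2$-action setting. Well-definedness of $f$ is immediate from the definition of symbolic representation (Definition~\ref{dfn:symb_rep}), which guarantees that for every $w\in\Xcal_{\Pcal,R}$ the nested intersection $\bigcap_{n\geq 0}\overline{D}_n(w)$ is a single point $f(w)\in\boldsymbol{T}$.

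First I would verify the equivariance $R^\bk\circ f = f\circ\sigma^\bk$. The key observation is the identity
\[
    R^{-\bk}\bigl(D_n(\sigma^\bk w)\bigr)
    = \bigcap_{\|\bm\|\leq n} R^{-(\bm+\bk)}(P_{w_{\bm+\bk}}),
\]
which, for large $n$, exhausts the defining intersection for $D_{n'}(w)$ up to a bounded shift in indices. Taking closures and intersecting over $n$ shows $R^{-\bk}(f(\sigma^\bk w)) = f(w)$, i.e. $f\circ\sigma^\bk = R^\bk\circ f$, which is the commuting property required of a homomorphism of dynamical systems. Continuity of $f$ then follows from a standard $\varepsilon$–$N$ argument: the compact sets $\overline{D}_n(w)$ have diameter tending to $0$ (since their intersection is a single point in the compact metric space $\boldsymbol{T}$), and two configurations $w,w'\in\Xcal_{\Pcal,R}$ agreeing on the ball of radius $n$ satisfy $D_n(w)=D_n(w')$, so $f(w)$ and $f(w')$ lie in a common set of small diameter.

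Next I would establish surjectivity. Given $t\in\boldsymbol{T}$, define a configuration $w\in\Acal^{\Z^2}$ by choosing for each $\bk\in\Z^2$ a symbol $w_\bk$ with $R^\bk(t)\in\overline{P_{w_\bk}}$; this is possible since $\Pcal$ covers $\boldsymbol{T}$ through the closures. One checks that every finite pattern of $w$ is allowed for $\Pcal,R$ (the corresponding intersection of closed cells contains $t$, hence is nonempty, and one passes from closed to open cells using that $R$ is a continuous rotation together with a small perturbation argument when needed), so $w\in\Xcal_{\Pcal,R}$ and $f(w)=t$.

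Finally, for the injectivity statement I would take $\Delta_{\Pcal,R}$ to be the union of $R$-translates of the boundaries $\bigcup_i \partial P_i$, i.e.\ the set of points in $\boldsymbol{T}$ whose $\Z^2$-orbit under $R$ meets some partition boundary. If $f(w)=f(w')=t\notin\Delta_{\Pcal,R}$, then for every $\bk$ the point $R^\bk(t)$ lies in the interior of a unique cell $P_{j_\bk}$, forcing $w_\bk=w'_\bk=j_\bk$. This gives $w=w'$ on $f^{-1}(\boldsymbol{T}\setminus\Delta_{\Pcal,R})$. The main technical obstacle, and the place where care is required, is precisely this last step: one must ensure that ``allowed'' as a pattern (nonempty intersection of open cells) is compatible with the chosen symbol when $R^\bk(t)$ lies in the interior, and one must handle the passage between open cells and their closures in the inductive construction. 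Once $\Delta_{\Pcal,R}$ is defined as above, that technicality disappears off $\Delta_{\Pcal,R}$, yielding the claimed one-to-one property.
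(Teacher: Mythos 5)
A remark on scope first: the paper does not actually prove this proposition --- it is quoted verbatim from \cite[Prop.~5.1]{Lab1} --- so your attempt can only be compared with the standard argument (the $\Z^2$-adaptation of \cite[Prop.~6.5.8]{LindDouglas1995AItS}), which is indeed the route you follow. Your treatment of well-definedness, of the equivariance identity $R^{-\bk}(D_n(\sigma^{\bk}w))=\bigcap_{\|\bm\|\le n}R^{-(\bm+\bk)}(P_{w_{\bm+\bk}})$, of continuity via the shrinking diameters of the nested compacta $\overline{D}_n(w)$, and of injectivity off $\Delta_{\Pcal,R}$ (which implicitly uses that $R^{\bk}(f(w))\in\overline{P_{w_\bk}}$ for every $\bk$, so that away from the boundary orbits the symbol is forced) are all correct.

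The genuine gap is in surjectivity. Given $t\in\boldsymbol{T}$ you choose, independently for each $\bk$, any symbol $w_\bk$ with $R^{\bk}(t)\in\overline{P_{w_\bk}}$, and then claim every pattern of $w$ is allowed because the intersection of the \emph{closed} cells contains $t$. But ``allowed'' requires the intersection of the \emph{open} cells $\bigcap_{\bk\in S}R^{-\bk}(P_{w_\bk})$ to be nonempty, and nonemptiness of the closed intersection does not imply this. The perturbation argument you gesture at cannot repair incoherent choices: since $R$ is a rotation, replacing $t$ by $t+\epsilon v$ moves every $R^{\bk}(t)$ by the same vector $\epsilon v$, so if $R^{\bk}(t)$ and $R^{\bk'}(t)$ lie on parallel boundary segments and $w_\bk$, $w_{\bk'}$ were chosen on opposite sides of them, no perturbation lands in both open cells simultaneously, and nothing else in your construction supplies a witness; the resulting $w$ need not belong to $\Xcal_{\Pcal,R}$ at all. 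Two standard repairs: (a) make the choices coherent by fixing one direction $v$ not parallel to any boundary edge and always taking the cell entered by $R^{\bk}(t)+\epsilon v$ --- exactly the device the paper uses around Figure~\ref{fig:2d-walk}; or (b) avoid boundary points entirely: for $t\notin\Delta_{\Pcal,R}$ the symbols are unambiguous and $t$ itself witnesses that all the open intersections are nonempty, so $f$ surjects onto the dense set $\boldsymbol{T}\setminus\Delta_{\Pcal,R}$, and then compactness of $\Xcal_{\Pcal,R}$ together with continuity of $f$ makes the image closed, hence all of $\boldsymbol{T}$. Either repair completes the proof; as written, the surjectivity step does not go through.
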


The proposition implies that points of $\boldsymbol{T}$ are encoded uniquely in $\mathcal{X}_{\mathcal{P},R}$ when the orbit of the point does not intersect the boundary of the partition.
In other words, the points in $\bT$ whose fibers  under the factor map $f:\mathcal{X}_{\mathcal{P},R} \rightarrow \boldsymbol{T}$ are not singletons is
\begin{equation} \label{eq:non-singleton-fibers}
    \{ y \in \bT : |f^{-1}(y)| > 1\} = \Delta_{\mathcal{P},R}.
\end{equation}

\subsection{Nonexpansive Directions in $\mathcal{X}_{\mathcal{P},R}$} 
\label{section:nonexpansive}

The next lemma says where to search for nonexpansive directions
in the symbolic dynamical system $\Xcal_{\Pcal,R}$.

\begin{lemma}
    Let $\Pcal$ give a symbolic representation of the dynamical system
    $(\boldsymbol{T},\Z^2,R)$ such that
    $R$ is a $\ZZ^2$-rotation on $\boldsymbol{T}$
    and assume that the subshift $\Xcal_{\Pcal,R}$ is minimal.
    Let $H$ be a nonexpansive half-space
    for $\mathcal{X}_{\mathcal{P},R}$.
    Then
    there exist $x,y\in\mathcal{X}_{\mathcal{P},R}$
    such that 
    $x|_{H\cap \Z^2}=y|_{H\cap \Z^2}$,
    $x\neq y$,
    and 
    $f(x)=f(y)\in\Delta_{\mathcal{P},R}$.\label{lem:non_exp_dense}
\end{lemma}

\begin{proof}
    Since $H$ is nonexpansive, 
    there exist $x,y\in\mathcal{X}_{\mathcal{P},R}$
    such that 
    $x|_{H\cap \Z^2}=y|_{H\cap \Z^2}$
    and $x\neq y$.
    Let $\bv_H$ be the outward unit normal vector of the half-space $H$.
    Let $(g_i)_{i\in\N}$ be a sequence of vectors $g_i\in H\cap\ZZ^2$ such that
    $\lim_{i\to\infty}\langle\bv_H, g_i\rangle=-\infty$.
    We have $\lim_{i\to\infty} \dist(\sigma^{g_i} x, \sigma^{g_i}y)=0$,
    thus the elements $x,y$ are regionally proximal.

    From Theorem~\ref{thm:auslander}, $x$ and $y$ must have the same image
    under the equicontinuous factor, i.e., we have $f(x)=f(y)$.
    If $f(x)\notin\Delta_{\mathcal{P},R}$, then from 
    Proposition~\ref{prop:factor-map}, $|f^{-1}(f(x))|=1$. Thus
    $x=y$ which is impossible.
    Therefore $f(x)\in\Delta_{\mathcal{P},R}$.
\end{proof}

Per Lemma~\ref{lem:non_exp_dense}, all nonexpansive directions correspond to
pairs of configurations whose images under the factor map are equal and belong
to $\Delta_{\mathcal{P},R}$. 

The strategy which will be used in the next sections is stated as follows.

\begin{lemma}\label{lem:strategy-to-get-Conway-worms}
    Let $x,y\in\Xcal_{\Pcal,R}$ such that $x\neq y$ and $f(x)=f(y)=\bp\in\Delta_{\mathcal{P},R}$.
    Then
    \begin{equation}\label{eq:diff-set-subset-orbit-partition-boundary}
        D(x,y)=\{\bn\in \ZZ^2\mid x_\bn \neq y_\bn\}
        \subseteq \{ \bn \in \ZZ^2 \mid R^\bn(\bp) \in \Delta_\Pcal\}.
    \end{equation}
    If there exist $t>0$ and a vector space $F\subset\RR^2$ 
    such that $\varnothing\neq D(x,y)\subset F^t$, then 
    $F$ is nonexpansive and
    $D(x,y)$ is a Conway worm associated to $F$.
\end{lemma} 

\begin{proof}
    Let $\bn \in \ZZ^2$ be such that $R^\bn(\bp) \notin \Delta_\Pcal$.
    Thus there exists $a\in\Acal$ such that 
    $f(\sigma^\bn(x))=f(\sigma^\bn(y))=R^\bn(\bp)\in P_a$.
    Thus $x_\bn=a=y_\bn$. This shows the set inclusion.
    The rest follows from the definition of Conway worms.
\end{proof}

In the sections that follow, we have pairs $x,y$ of configurations such that
Equation~\eqref{eq:diff-set-subset-orbit-partition-boundary} is in fact an
equality.
This allows to search for Conway worms and nonexpansive directions
in the context of the Jeandel-Rao Wang shift
by performing an exhaustive examination of the orbits
of points that intersect $\Delta_\Pcal$, the boundary of the partition.

\section{The Jeandel-Rao Wang shift}\label{sec:jeandel-rao-wang-shift}

In this section, we recall known results about the Jeandel-Rao Wang shift on
which our results are based. In \cite{Lab1}, the author presents a remarkable method of generating a proper, minimal, aperiodic subshift of the full Wang shift $\Omega_0$. We describe it loosely here, and refer the reader to \cite{Lab1} for the full details. The central aspect of this construction is a partition $\mathcal{P}_0$ of the torus $\boldsymbol{T} = \R^2/\Gamma_0$ where $\Gamma_0$ is the lattice generated by the vectors $\gamma_0 = (\varphi,0)$ and $\gamma_1 = (1,\varphi + 3)$. In Figure~\ref{fig:markov_partition} we see the partition $\mathcal{P}_0$ of the rectangular fundamental domain of the torus $\boldsymbol{T}$.

\begin{figure}[h]
\begin{center}
    \includegraphics{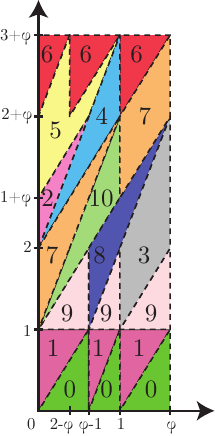}
\end{center}
    \caption{The partition $\mathcal{P}_0$ of $\R^2/\Gamma_0$.}
    \label{fig:markov_partition}
\end{figure}

Configurations (tilings) correspond to points in $\boldsymbol{T}$ by defining a dynamical system on $\boldsymbol{T}$. Specifically, we define the continuous $\Z^2$-action $R_0$ on $\boldsymbol{T}$ by $R_0^\bn(\bx):=R_0(\bn,\bx)=\bx + \bn$ for every $\bn=(n_1,n_2)\in\Z^2$, and $R_0$ gives rise to a dynamical system $(\R^2/\Gamma_0, \Z^2, R_0)$. From the labeling of the partition $\mathcal{P}_0$, we obtain the symbolic dynamical system $\Xcal_{\Pcal_0,R_0}$ corresponding  to $(\R^2/\Gamma_0, \Z^2, R_0)$ 

The following theorem about $\Xcal_{\Pcal_0,R_0}$ is proved in \cite{Lab1}.

\begin{theorem}\label{thm:JR-max-equi-factor}
    The Jeandel-Rao Wang shift $\Omega_0$ has the following properties:
\begin{enumerate}[\rm (i)]
\item $\Xcal_{\Pcal_0,R_0}\subsetneq\Omega_0$
is a proper minimal and aperiodic subshift of $\Omega_0$,
\item the partition $\Pcal_0$ gives a symbolic representation of 
$(\R^2/\Gamma_0,\Z^2,R_0)$,
\item the dynamical system $(\R^2/\Gamma_0,\Z^2,R_0)$ is the maximal
    equicontinuous factor of $(\Xcal_{\Pcal_0,R_0},\Z^2,\sigma)$,
\item the set of fiber cardinalities of the factor map
$\Xcal_{\Pcal_0,R_0}\to\R^2/\Gamma_0$ is $\{1,2,8\}$, and
\item
    the dynamical system $(\Xcal_{\Pcal_0,R_0},\Z^2,\sigma)$ is strictly
    ergodic and
    the measure-preserving dynamical system $(\Xcal_{\Pcal_0,R_0},\Z^2,\sigma,\nu)$
    is isomorphic 
    to $(\R^2/\Gamma_0,\Z^2,R_0,\lambda)$ 
    where $\nu$ is the unique shift-invariant probability measure on
    $\Xcal_{\Pcal_0,R_0}$
    and $\lambda$ is the Haar measure on $\R^2/\Gamma_0$.
\end{enumerate}
\end{theorem}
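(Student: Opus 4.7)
The theorem is stated as a citation from \cite{Lab1}, so the plan is to explain how I would reconstruct each item by exploiting the Markov-partition framework of Section~\ref{subsec:shifts} together with the geometry of $\Pcal_0$ on the torus $\R^2/\Gamma_0$. The unifying object is the coding map $f:\Xcal_{\Pcal_0,R_0}\to\R^2/\Gamma_0$ produced by Proposition~\ref{prop:factor-map}: once we know $f$ exists and commutes with the actions, every statement follows by transferring properties between the two systems.

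First I would establish (ii), since (iii)--(v) all depend on the existence and well-behavedness of $f$. The task is to show $\bigcap_{n\geq 0}\overline{D_n(w)}$ is a single point for every $w\in\Xcal_{\Pcal_0,R_0}$. The natural route is to prove that the diameters $\diam(D_n(w))$ tend to $0$, which I would try to do by exhibiting a finite family of ``refinement patches'' $S_j\subset\Z^2$ such that the intersection $\bigcap_{\bk\in S_j}R_0^{-\bk}(P_{w_\bk})$ is contained in some subatom of the partition strictly finer than any element of $\Pcal_0$. Iterating the $\Z^2$-action, the rotation vectors $(\varphi,0)$ and $(1,\varphi+3)$ being $\Q$-linearly independent modulo $\Gamma_0$ ensures that the orbits are dense and the sets $D_n(w)$ shrink. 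Once (ii) holds, Proposition~\ref{prop:factor-map} delivers $f$ and the one-to-one condition on $f^{-1}(\boldsymbol T\setminus\Delta_{\Pcal_0,R_0})$.

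Next I would tackle (i). Containment $\Xcal_{\Pcal_0,R_0}\subseteq\Omega_0$ is verified by checking, for every two-letter pattern in $\Lcal_{\Pcal_0,R_0}$ (equivalently, for every atom $P_i$ and every generator $\be_1,\be_2$), that the allowed successor atoms correspond to Wang tiles that match across the shared edge; this is a finite and explicit combinatorial check that reduces to inspecting the picture of $\Pcal_0$ against Figure~\ref{fig:JR-tile-set}. Properness, $\Xcal_{\Pcal_0,R_0}\neq\Omega_0$, can be demonstrated by constructing one Jeandel--Rao configuration containing a pattern that never appears in $\Lcal_{\Pcal_0,R_0}$. Minimality follows from the fact that the $\Z^2$-rotation $R_0$ is minimal on $\R^2/\Gamma_0$ (irrational rotation with $\Q$-linearly independent translations): given $w\in\Xcal_{\Pcal_0,R_0}$ and any pattern $p\in\Lcal_{\Pcal_0,R_0}$, density of the orbit of $f(w)$ visits the open cell associated with $p$, so $p$ occurs in $w$ along some translate. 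Aperiodicity follows by combining minimality with the freeness of $R_0$ on $\boldsymbol T$: a period of some $x\in\Xcal_{\Pcal_0,R_0}$ would push down via $f$ to a period of $f(x)$, forcing $\bn=\zero$.

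For (iii), I would invoke the universal property of the maximal equicontinuous factor. The torus rotation $(\R^2/\Gamma_0,\Z^2,R_0)$ is equicontinuous, so it is a factor of the maximal equicontinuous factor. Conversely, general theory (see the framework in \cite{Lab1}) gives that if $g:\Xcal_{\Pcal_0,R_0}\to Y$ is any equicontinuous factor onto a minimal system, then $g$ descends through $f$, using that fibers of $f$ are proximal. Item (iv) requires a careful case analysis: the fiber $f^{-1}(\bx)$ has size $1$ on the complement of $\Delta_{\Pcal_0,R_0}$, size $2$ on generic boundary strata (points whose $\Z^2$-orbit hits one family of partition edges), and size $8$ on the intersection of several boundary strata orbits; here I would enumerate the orbit-intersections of the horizontal, vertical and slope-$\varphi$ (and $\varphi^2$) edges of $\Pcal_0$ under $R_0$, and for each type count the number of compatible symbolic preimages. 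Finally, (v) follows from minimality, unique ergodicity of $(\boldsymbol T,\Z^2,R_0,\lambda)$, and the fact that $f$ is a.e.\ one-to-one: any shift-invariant probability measure $\nu$ on $\Xcal_{\Pcal_0,R_0}$ must push forward to $\lambda$, and the a.e.\ injectivity of $f$ determines $\nu$ uniquely, yielding the measurable isomorphism.

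The main obstacle is step~(ii), verifying that the $D_n(w)$ shrink. The partition $\Pcal_0$ has atoms of several different shapes, and one must rule out pathological configurations whose cells refine to a positive-area set; this amounts to proving that the coincidence set $\Delta_{\Pcal_0,R_0}$ is a genuine measure-zero stratification and that the symbolic language $\Lcal_{\Pcal_0,R_0}$ is rich enough to separate any two points of $\boldsymbol T$. After that, the computation of fiber sizes in (iv) is the main bookkeeping challenge, as the jumps from $1$ to $2$ to $8$ reflect precisely the geometry that will later drive the nonexpansive-direction analysis of Theorem~\ref{thm:main-theorem}.
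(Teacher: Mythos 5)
The paper itself offers no proof of this theorem: it is imported wholesale from \cite{Lab1} with the single sentence ``The following theorem about $\Xcal_{\Pcal_0,R_0}$ is proved in \cite{Lab1}.'' So there is no in-paper argument to compare yours against, and your opening observation that the statement is a citation is correct. Your outline is a reasonable reconstruction of the standard Markov-partition strategy (establish the symbolic representation, obtain the factor map $f$, then transfer minimality, aperiodicity, the maximal equicontinuous factor property, and unique ergodicity from the torus rotation). Items (i), (iii) and (v) are handled essentially the way one would expect, and your remark that properness of $\Xcal_{\Pcal_0,R_0}\subsetneq\Omega_0$ comes from a pattern of $\Omega_0$ absent from $\Lcal_{\Pcal_0,R_0}$ is consistent with the fault-line discussion in Section~3 of this paper.

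The genuine gap is that the two hardest items, (ii) and (iv), are exactly where your sketch stops being a proof. For (ii), asserting that $\Q$-linear independence of the translation vectors ``ensures that the sets $D_n(w)$ shrink'' is not enough: density of orbits does not by itself force the refined cells to separate points, and the actual argument in \cite{Lab1} goes through the self-similar (substitutive) structure of $\Pcal_0$ under a two-dimensional analogue of Rauzy induction, with the contraction supplied by the induced substitutions and verified in part by computer. Your proposed ``finite family of refinement patches'' is the right kind of object, but you give no mechanism for producing it or for proving it exists. For (iv), you describe the bookkeeping but give no reason the answer at the most singular points (the orbit of $\boldsymbol{0}$, where $\Delta$-lines of all four slopes meet) should be $8$ rather than $4$ or $16$; this count is a delicate enumeration of compatible symbolic preimages along intersecting boundary orbits and cannot be waved through. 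As written, your proposal is a plausible plan rather than a proof, and it would need the substitutive machinery of \cite{Lab1} (or an equivalent separation argument) to close these two steps.
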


\begin{figure}[h]
\begin{center}
    \includegraphics[width=0.99\linewidth,trim={2cm 0cm 0cm 0cm},clip]{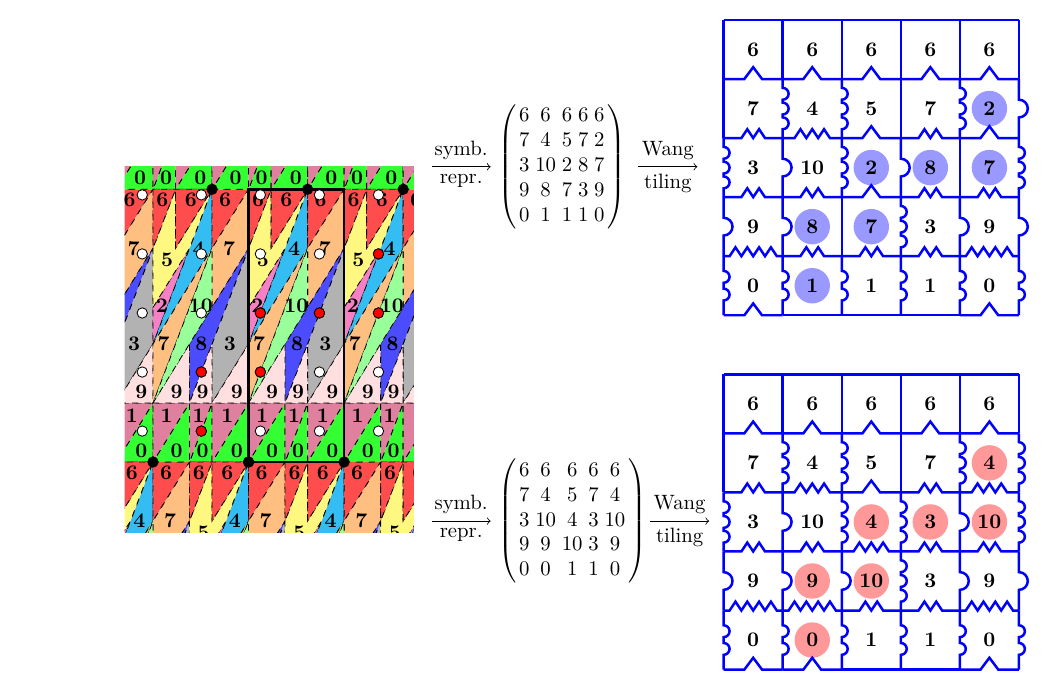}
\end{center}
    \caption{
        On the left, we illustrate the lattice $\Gamma_0=\langle(\varphi,0), (1,\varphi+3)\rangle_\Z$, where $\varphi=\frac{1+\sqrt{5}}{2}$, with black vertices, a rectangular fundamental domain of the flat torus $\boldsymbol{T} = \R^2/\Gamma_0$ with a black contour and  a polygonal partition $\Pcal_0$ of $\R^2/\Gamma_0$ with indices in the set $\{0,1,\dots,10\}$. For every starting point $\bp\in\R^2$, the coding of the orbit $\mathcal{O}_{R_0}(\boldsymbol{p})$, which is just the shifted lattice $\bp+\Z^2$ (the white dots), under the polygonal partition  yields a configuration $w:\Z^2\to\{0,1,\dots,10\}$ which is  a symbolic representation of $\bp$. The configuration $w$ corresponds to a valid tiling of the plane with Jeandel-Rao's set of 11 Wang tiles. As shown on the right, when the orbit of $\bp$ hits the boundary of the partition $\Pcal_0$, $\bp$ has more than one symbolic representations.} \label{fig:2d-walk}
\end{figure}

As stated in Theorem~\ref{thm:main-theorem}, only one of the slopes of
nonexpansive directions within the minimal subshift of the Jeandel-Rao Wang
shift is rational, namely the slope 0. When the slope of a nonexpansive
direction is rational, it may be associated to a pattern
which is repeated periodically
along a nonexpansive direction $(n_1,n_2)\in\Z^2$. 
This allows one to modify a valid configuration into a different valid configuration
by shifting half of the configuration on one side of
the nonexpansive direction by a vector $(n_1,n_2)$. If this is possible, the nonexpansive direction is
called a \emph{fault line}. The undesirable effect of a fault line is that of allowing valid patterns
of appearance probability zero which may appear only along the fault line and
never elsewhere.
In the case of the Jeandel-Rao Wang shift, it turns out that the line of slope
zero is a fault line. As a consequence, the Jeandel-Rao Wang shift $\Omega_0$
is not minimal \cite{MR4226493}. But it is conjectured to be uniquely ergodic
or, in other words, each pattern has a uniquely determined appearance frequency 
(shift-invariant measure) in every valid configuration in the Jeandel-Rao Wang
shift $\Omega_0$.

\begin{conjecture} 
\cite{MR4226493}
The Jeandel-Rao subshift $\Omega_0$ is uniquely ergodic. 
\label{conj:ergodic}
\end{conjecture}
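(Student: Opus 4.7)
The plan is to bootstrap the strict ergodicity of the minimal subshift $\Xcal_{\Pcal_0,R_0}$ established in Theorem~\ref{thm:JR-max-equi-factor}(v) up to unique ergodicity of the full Wang shift $\Omega_0$. Since $\Xcal_{\Pcal_0,R_0}$ is uniquely ergodic, its invariant probability measure $\nu$ extends trivially to a shift-invariant probability measure on $\Omega_0$ supported on $\Xcal_{\Pcal_0,R_0}$. The task reduces to showing that $\nu$ is the \emph{only} such measure, i.e., that every shift-invariant probability measure $\mu$ on $\Omega_0$ satisfies $\mu(\Xcal_{\Pcal_0,R_0})=1$.

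First, I would analyze the structure of the residual set $\Omega_0\setminus\Xcal_{\Pcal_0,R_0}$. The discussion preceding the conjecture indicates that elements of this set arise from resolving the slope-$0$ fault line: one takes a valid configuration and modifies the strip along a horizontal Conway worm of slope $0$, producing a new valid configuration containing patterns that appear only along this line. I would attempt to classify these exotic configurations as those whose defect, relative to $\Xcal_{\Pcal_0,R_0}$, is confined to a bounded tubular neighbourhood of finitely many straight lines in $\Z^2$ parallel to the nonexpansive directions of \Cref{thm:main-theorem}, possibly together with combinations thereof arising when several Conway worms coexist as in \Cref{fig:all-Conway-worms-in-JR-in-one-image}.

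Second, I would implement a density argument. If a configuration $x\in\Omega_0\setminus\Xcal_{\Pcal_0,R_0}$ has all its defects concentrated in a neighbourhood of finitely many lines, then any pattern $p$ occurring in $x$ but not in any element of $\Xcal_{\Pcal_0,R_0}$ has Besicovitch density zero: within a box of side $n$, such patterns occur at most $O(n)$ times, while the box contains $\Theta(n^2)$ positions. By the ergodic decomposition together with the pointwise ergodic theorem, any shift-invariant probability measure $\mu$ must assign zero measure to the cylinder of any pattern of zero density; combined with the classification of the residual set, this would force $\mu(\Omega_0\setminus\Xcal_{\Pcal_0,R_0})=0$ and hence $\mu=\nu$.

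The main obstacle is the first step: rigorously showing that every configuration in $\Omega_0\setminus\Xcal_{\Pcal_0,R_0}$ has its defects confined to a lower-dimensional set. \emph{A priori}, $\Omega_0$ could contain exotic configurations with defects of positive density, in which case the density argument would collapse. Producing such a structural description requires controlling all interactions between fault-line resolutions and the Conway worms along the four nonexpansive directions identified in \Cref{thm:main-theorem}, which is likely where most of the combinatorial work lies; it would presumably combine the matching-rule analysis of \cite{Lab1} with the interval-exchange description of the Conway worms promised for \Cref{sec:iet}. A complementary approach, avoiding an explicit description of the residual set, would be to verify directly that empirical frequencies of patterns converge uniformly on $\Omega_0$ to those prescribed by $\nu$, via a generalization of the Oxtoby criterion to non-minimal $\Z^2$-subshifts.
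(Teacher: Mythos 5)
This statement is stated in the paper as a \emph{conjecture} (attributed to \cite{MR4226493}); the paper offers no proof of it, so there is nothing to compare your argument against, and your proposal must be judged on its own. Judged that way, it is a strategy sketch rather than a proof, and the gap you yourself flag is exactly the open content of the conjecture. Your reduction is the natural one: since $(\Xcal_{\Pcal_0,R_0},\Z^2,\sigma)$ is strictly ergodic by Theorem~\ref{thm:JR-max-equi-factor}(v), unique ergodicity of $\Omega_0$ is equivalent to the assertion that every shift-invariant probability measure $\mu$ on $\Omega_0$ satisfies $\mu(\Xcal_{\Pcal_0,R_0})=1$, and your density argument in the second step (defects confined to $O(n)$ positions in an $n\times n$ box force zero measure for the corresponding cylinders) is sound \emph{conditional on} the first step.

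The genuine gap is the first step: nothing in this paper, nor in your argument, establishes that every configuration in $\Omega_0\setminus\Xcal_{\Pcal_0,R_0}$ has its ``defects'' confined to a bounded neighbourhood of finitely many lines. The paper only exhibits one mechanism producing points outside the minimal subshift (the slope-$0$ fault line, which shows $\Omega_0$ is not minimal); it does not claim, and Theorem~\ref{thm:main-theorem} does not imply, that this exhausts $\Omega_0\setminus\Xcal_{\Pcal_0,R_0}$. Note in particular that Theorem~\ref{thm:main-theorem} computes the nonexpansive directions of the \emph{minimal subshift} $\Xcal_{\Pcal_0,R_0}$, not of $\Omega_0$, so it gives no control over configurations lying outside $\Xcal_{\Pcal_0,R_0}$; a priori $\Omega_0$ could contain an invariant subshift disjoint from $\Xcal_{\Pcal_0,R_0}$ whose configurations differ from minimal ones on a set of positive density, and then your density argument collapses, as you acknowledge. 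Ruling this out is precisely what would be needed to settle Conjecture~\ref{conj:ergodic}, and neither the matching-rule analysis of \cite{Lab1} nor the interval-exchange description in Section~\ref{sec:iet} (which again concerns orbits coded by the partition, i.e.\ points of $\Xcal_{\Pcal_0,R_0}$) supplies it. Your proposal should therefore be regarded as a correct reduction plus an honest identification of the open problem, not as a proof.
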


The process of coding a point $\boldsymbol{p} \in \boldsymbol{T}$ is illustrated in Figure~\ref{fig:2d-walk}\footnote{We encourage the reader to view an electronic copy of this figure and others since they are color coded.}. Observe that at points of the orbit  $\mathcal{O}_{R_0}(\boldsymbol{p})$ that fall on the boundary of an atom of the partition $\mathcal{P}_0$, the coding is ambiguous, but this problem is easily addressed by specifying a direction $\vec{v}$ that is nonparallel to the boundary edges of the atoms in $\mathcal{P}_0$ and using $\vec{v}$ to determine the labeling from the atoms adjacent to the boundary point. For the tiling on the upper right of Figure~\ref{fig:2d-walk}, we use direction $\vec{v} = (1,-1)$ to resolve the ambiguity where the orbit $\mathcal{O}_{R_0}(\boldsymbol{p})$ intersects the boundary of $\mathcal{P}_0$, and for the tiling on the lower right side of Figure~\ref{fig:2d-walk}, we use the opposite direction $-\vec{v}$ to resolve the ambiguity.

In understanding the nonexpansive directions in $\mathcal{X}_{\mathcal{P}_0, R_0}$, the main point is this article, we must consider the orbits of points that intersect one (or more) of the boundaries 
$\Delta_{\Pcal_0}$
of the polygons in $\mathcal{P}_0$. 
Also we define a \emph{$\Delta_{\Pcal_0}$-line} to be any straight line segment forming part of the boundary of an atom in the partition $\mathcal{P}_0$.	

In next section, we  perform an exhaustive examination of the orbits of points that intersect $\Delta_{\Pcal_0}$, the boundary of the partition, to calculate the nonexpansive directions for $\mathcal{X}_{\mathcal{P}_0,R_0}$.

\section{Calculating the nonexpansive directions for $\mathcal{X}_{\mathcal{P}_0,R_0}$} 
\label{sec:calculating}

The goal of this section is prove Theorem~\ref{thm:main}
which gives the nonexpansive directions within
the minimal subshift $\mathcal{X}_{\mathcal{P}_0,R_0}$ of the Jeandel-Rao Wang shift.
If $x,y\in\Xcal_{\Pcal,R}$ are such that $x\neq y$ and $f(x)=f(y)=\bp\in\Delta_{\mathcal{P},R}$,
then from Lemma~\ref{lem:strategy-to-get-Conway-worms},
we have that the support of their differences satisfy $D(x,y)\subseteq W(\bp)$
where
\[
    W(\bp) = \{ \bn \in \ZZ^2 \mid R_{0}^{\bn}(\bp) \in \Delta_{\Pcal_0}\}
\]
for every $\bp \in \boldsymbol{T}$.
If there exist $t>0$ and a vector space $F\subset\RR^2$ 
such that $W(\bp)\subset F^t$, then 
$F$ is nonexpansive for $\mathcal{X}_{\mathcal{P}_0,R_0}$.

Thus we need to describe the part of the orbit $\mathcal{O}_{R_0}(\bp)$ 
of a point $\bp \in \boldsymbol{T}$ that stays in the boundary
$\Delta_{\Pcal_0}$ of the partition $\Pcal_0$.
When $\bp\in\Delta_{\Pcal_0,R_0}$, the set $W(\bp)$ can have 
different kinds of behaviors.
For example, the set $W(\bp)$ is illustrated in 
Figure~\ref{fig:delta_pts_phi} when $\bp = 1/4(\varphi -1 ,1)$
and in Figure~\ref{fig:delta_pts_origin} when $\bp = (0,0)$.
The results in this sections aim to describe these behaviors.

\begin{figure}[h]
\centering
\begin{subfigure}[h]{.45\textwidth} 
\centering
\includegraphics[scale = .283]{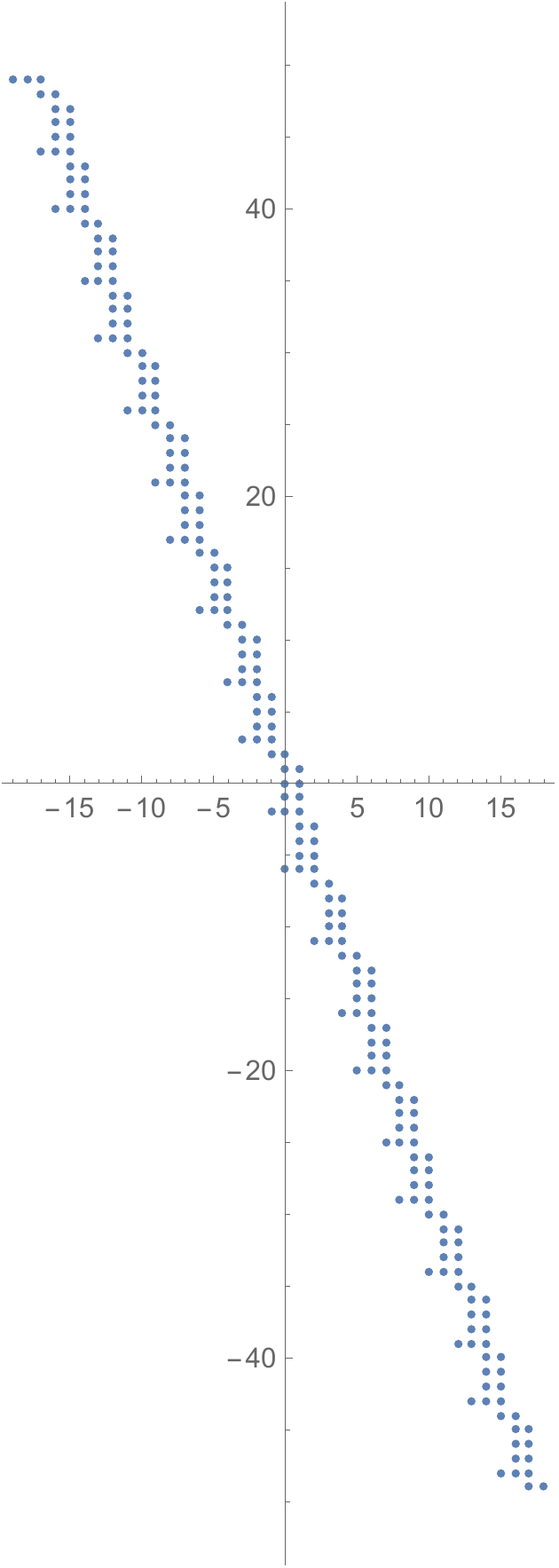}  
    \caption{$W(\bp)$ when $\boldsymbol{p} = (1/4)(\varphi - 1,1)$.}
\label{fig:delta_pts_phi}
\end{subfigure}
\begin{subfigure}[h]{.45\textwidth} 
\centering
\includegraphics[scale = .25]{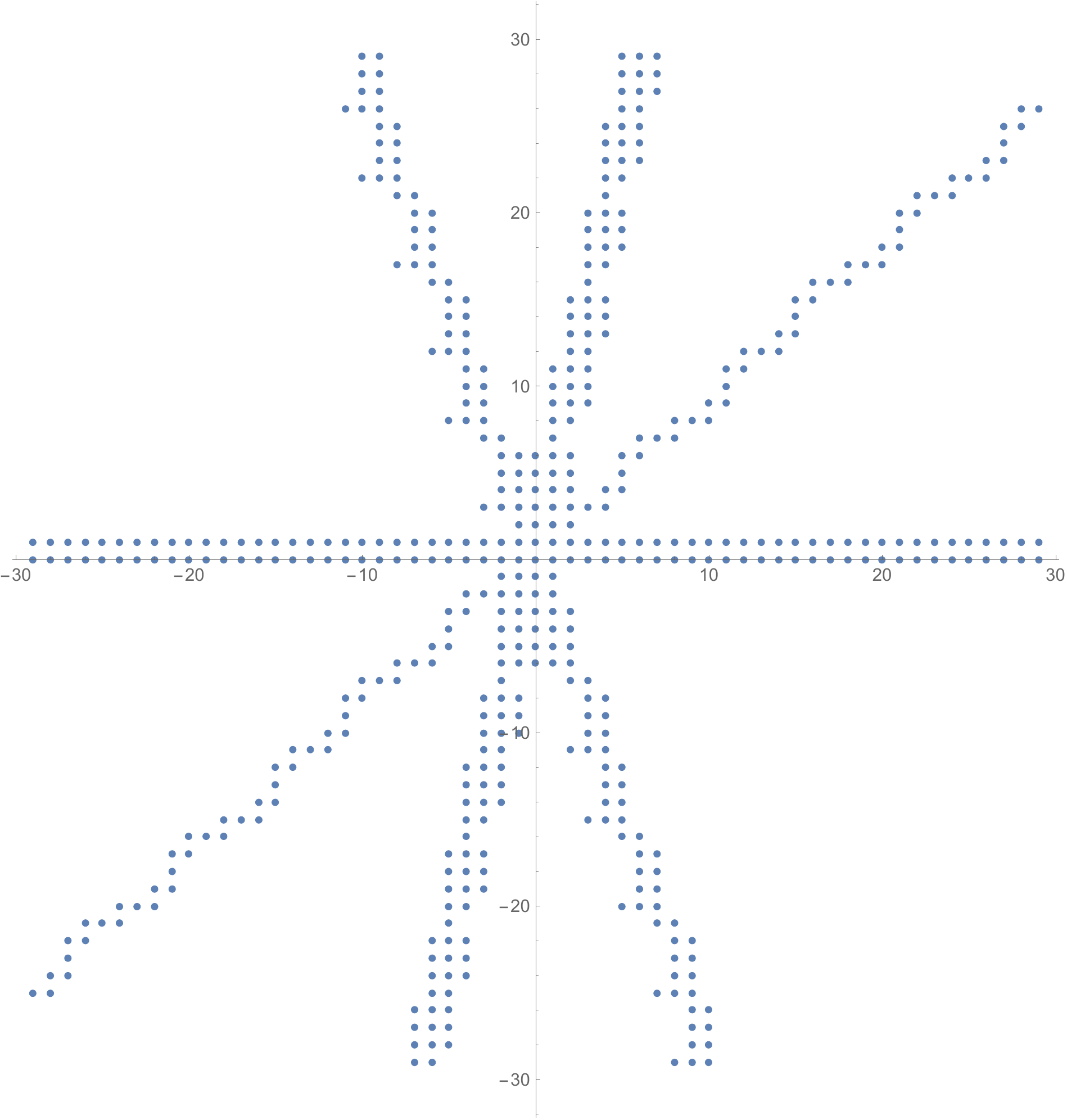}  
    \caption{$W(\bp)$ when $\boldsymbol{p} = \boldsymbol{0}.$}
    \label{fig:delta_pts_origin}
\end{subfigure}\caption{
    Illustration of the set $W(\bp)$ computed experimentally.}
    \label{fig:delta_pts} 
\end{figure}

\subsection{Returning to a fixed segment}

The next proposition shows that the set of return vectors $\bn\in\ZZ^2$ 
to a fixed segment is bounded away from an Euclidean line.
It describes the restriction of an
orbit under $R_0$ returning to an arbitrary line segment $\vecomega$ 
of slope in $\QQ(\varphi)\cup\{\infty\}$
in the 2-dimensional torus $\boldsymbol{T}=\RR^2/\Gamma_0$.
It even provides the normal vector of the associated 1-dimensional vector space
in terms of $\vecomega$.

\begin{proposition}\label{prop:return-to-a-generic-segment}
    Let $M=\left(\begin{smallmatrix}\varphi & 1\\0 & \varphi+3\end{smallmatrix}\right)$
        so that $\Gamma_0=M\cdot\ZZ^2$.
    Let $\vecomega = \left(\begin{smallmatrix}a\varphi+b\\c\end{smallmatrix}\right)$ 
        where $a,b,c\in \ZZ$ such that $\vecomega\neq0$ and
    \[
        K_{\vecomega}=\{ \bn \in \ZZ^2 \mid \bn+M\bg=\alpha\vecomega 
              \text{ for some }\bg\in \ZZ^2
              \text{ and }     -1<\alpha<1\}.
    \]
    There exists $t>0$ such that $K_{\vecomega}\subset F^t$, that is,
    $K_{\vecomega}$ is bounded away from $F$,
    where $F$ is the 1-dimensional vector space orthogonal to the normal vector
    $\bu=\left(\begin{smallmatrix}0\\1\end{smallmatrix}\right)$ if $c=0$,
    $\bu=\left(\begin{smallmatrix}c\\-b\end{smallmatrix}\right)$ if $a+3b-c=0$,
    and $\bu=\left((M A^{-1}B)^\top + {\rm Id}\right)\vecomega$
    if $c(a+3b-c)\neq0$ where
        $A = \left(\begin{smallmatrix}
            c & -4a-b\\
            0 &  a+3b-c 
        \end{smallmatrix}\right)$
        and
        $B = \left(\begin{smallmatrix}
            0 & a\\
            c & -b
        \end{smallmatrix}\right)$, that is,
    \begin{equation}\label{eq:bu-generic-normal-vector}
        \bu=
        \begin{pmatrix}
        (5a^2c+8abc+b^2c+c^3)\varphi + (4a^2c+2abc+3b^2c+3c^3)\\
        (a^3-2ab^2-b^3-a^2c-2abc-bc^2)\varphi + (a^3-a^2b-ab^2-a^2c-b^2c+ac^2-c^3)
        \end{pmatrix}.
    \end{equation}
\end{proposition}

\begin{proof} 
    Let $\bn=\left(\begin{smallmatrix}n_1\\n_2\end{smallmatrix}\right) \in \ZZ^2$ be such that $\bn+M\bg=\alpha\vecomega$ for some
    $\bg=\left(\begin{smallmatrix}g_1\\g_2\end{smallmatrix}\right) \in\ZZ^2$ and $0<\alpha<1$.
    Also let $\vecomega^\perp = \left(\begin{smallmatrix}c\\-a\varphi-b\end{smallmatrix}\right)$.
    We have
    \begin{align}\label{eq:alphaomega-omega}
        \begin{split}
        0 &= \langle \alpha\vecomega,\vecomega^\perp\rangle
           = \langle \bn+M\bg,       \vecomega^\perp\rangle\\
          &= (cg_1-4ag_2-bg_2-an_2)\varphi + (-ag_2-bn_2-3bg_2+cn_1+cg_2).
        \end{split}
    \end{align}
    As $\varphi \notin \QQ$ and all coefficients are integers, both parenthesis must be zero.
    Thus we deduce two equations from the one above:
    \begin{align*}
        cg_1 - (4a+b)g_2 &=        an_2\\
            (a+3b-c) g_2 &= c n_1 -bn_2,
    \end{align*}
    a system which can be rewritten as
    \[
        A\bg = B\bn
    \]
    where
    \[
        A = \begin{pmatrix}
            c & -4a-b\\
            0 &  a+3b-c 
        \end{pmatrix}
        \qquad
        \text{ and }
        \qquad
        B = \begin{pmatrix}
            0 & a\\
            c & -b
        \end{pmatrix}.
    \]
    Note that $\det(A)=c(a + 3 b - c)$.

    Assume that $\det(A)=0$ and $c=0$.
    We have
    \[
        \left\{ \begin{array}{rl}
                - (4a+b)g_2 &= an_2\\
                (a+3b) g_2 &= -bn_2
        \end{array} \right.
        \implies
        \left\{ \begin{array}{rl}
                - (4ab+b^2)g_2 &= abn_2\\
                (a^2+3ab) g_2 &= -abn_2
        \end{array} \right.
        \implies
        (a^2-ab-b^2) g_2 = 0.
    \]
    The equation $a^2-ab-b^2=0$ has only $a=b=0$ as integer solution which implies that
    $\vecomega=0$ which is a contradiction.
    Therefore $g_2=0$, which implies that $n_2=0$, and hence 
    $K_{\vecomega}\subseteq \ZZ \times \{0\}$
    which are vectors orthogonal to the normal vector 
    $\bu=\left(\begin{smallmatrix}0\\1\end{smallmatrix}\right)$.

    Assume that $\det(A)=0$ and $a+3b-c=0$. Then, $cn_1=bn_2$.
    Thus, $K_{\vecomega}$
    consists of vectors orthogonal to the normal vector 
    $\bu=\left(\begin{smallmatrix}c\\-b\end{smallmatrix}\right)$.

    Now assume that $\det(A)\neq 0$.
    We have
    \begin{align*}
        \langle \alpha\vecomega,\vecomega\rangle
           &= \langle \bn+M\bg, \vecomega\rangle
           = \langle \bn, \vecomega\rangle+\langle M\bg, \vecomega\rangle\\
           &= \langle \bn, \vecomega\rangle+\langle M A^{-1}B\bn, \vecomega\rangle\\
           &= \langle \bn, \vecomega\rangle+\langle\bn, (M A^{-1}B)^\top \vecomega\rangle
           = \langle\bn, (M A^{-1}B)^\top \vecomega + \vecomega\rangle
    \end{align*}
    Let
    \begin{equation}\label{eq:normal-vector-u}
        \bu 
        = \left((M A^{-1}B)^\top + {\rm Id}\right)\vecomega
    \end{equation}
    and $F$ be the vector space $F$ orthogonal to the vector $\bu$.
    The distance from a point $\bn\in\ZZ^2$ to $F$ can be defined
    as $\dist(\bn,F)=|\langle \bn,\bu\rangle|$.
    We have that $\bn$ is bounded away from the vector space $F$ 
    orthogonal to the vector $\bu$, i.e.,
    \[
        \dist(\bn,F)
        = |\langle \bn,\bu\rangle|
        = |\langle \alpha\vecomega,\vecomega\rangle|
        = |\alpha|\cdot \norm{\vecomega}^2 
        \leq \norm{\vecomega}^2 
        = (a^{2}+ 2ab)\varphi + a^{2} + b^{2} + c^{2}.
    \]
    Thus if $t=\norm{\vecomega}^2=(a^{2}+ 2ab)\varphi + a^{2} + b^{2} + c^{2}$,
    we have $\bn\in F^t$.
    Note that
    \begin{align*}
        A^{-1}&=
        \frac{1}{\det(A)}
    \left(\begin{array}{rr}
        a + 3 b - c & 4 a + b \\
        0 & c
    \end{array}\right),\\
        A^{-1}B &=
        \frac{1}{\det(A)}
        \left(\begin{array}{rr}
            4 a c + b c & a^{2} - a b - b^{2} - a c \\
            c^{2} & -b c
        \end{array}\right),\\
        M A^{-1}B &=
        \frac{1}{\det(A)}
    \left(\begin{array}{rr}
        (4 a c + b c)\varphi  + c^{2} & 
        (a^{2} - a b - b^{2} - a c)\varphi - b c \\
        c^2\varphi + 3c^{2} & - b c\varphi - 3 b c
    \end{array}\right).
    \end{align*}
    Thus, we compute that the normal vector is
    \begin{align*}
        \bu
        &=\left((M A^{-1}B)^\top + {\rm Id}\right)\vecomega
        =\left((M A^{-1}B)^\top \vecomega + \vecomega\right)\\
        &=
        \frac{1}{\det(A)}
    \left(\begin{array}{r}
        (5a^2c+8abc+b^2c+c^3)\varphi + (4a^2c+2abc+3b^2c+3c^3)\\
        (a^3-2ab^2-b^3-a^2c-2abc-bc^2)\varphi + (a^3-a^2b-ab^2-a^2c-b^2c+ac^2-c^3)
    \end{array}\right).\qedhere
    \end{align*}
\end{proof}

Equation~\eqref{eq:normal-vector-u} provides
a generic formula for the vector $\bu$, but 
the matrices $A$ and $B$ can not be deduced
from $\vecomega$ and the matrix $M$ only from linear algebra.
Indeed, the matrices $A$ and $B$ are obtained from the single
Equation~\eqref{eq:alphaomega-omega} which involves properties of the ring
$\ZZ[\varphi]$ allowing to deduce two equations from one.

\subsection{Returning to segments of a fixed slope}

\begin{figure}[h]
\centering
\includegraphics[scale=0.95]{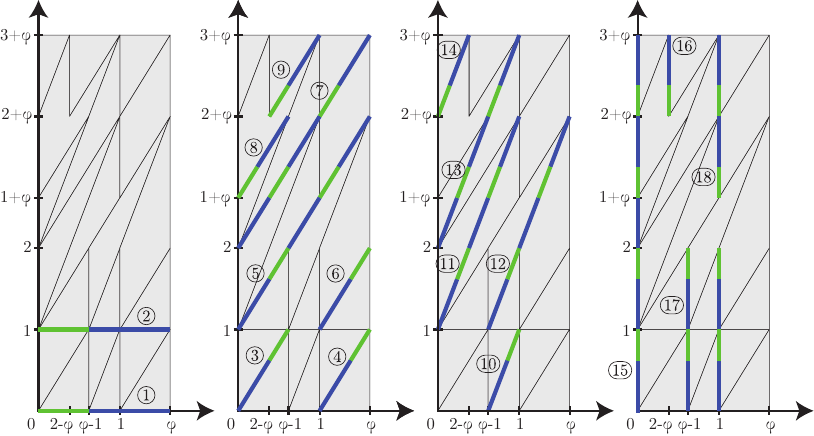}
    \caption{$\Delta_{\Pcal_0}$-lines in $\mathcal{P}_0$}
    \label{fig:markov_partition_edges}
\end{figure}

\begin{table}[h]
\centering
\footnotesize
\begin{tabular}{|c|c|c|c|}\hline
    Slope & $\Delta_{\Pcal_0}$-line & Domain Restriction & $(a,b)$  \\ \hline\hline

\multirow{2}{*}{0} 
  & \Circled{1} & $0 \leq x \leq \varphi$ & $(0,0)$ \\
  & \Circled{2} & $0 \leq x \leq \varphi $ & $(0,-1)$\\\hline 
    
\multirow{14}{*}{$\varphi$}   
  & \Circled{3} & $0 \leq x \leq \varphi - 1$ & $(0,0)$\\
  & \Circled{4} & $1 \leq x \leq \varphi$ & $(-1,0)$\\
  & \Circled{5} & $0 \leq x \leq \varphi - 1$ & $(0,-1)$\\
  & \Circled{5} & $\varphi -1 \leq x \leq 2\varphi - 2$ & $(1,-2)$\\
  & \Circled{5} & $2\varphi - 2 \leq x \leq \varphi$ & $(2,-3)$\\
  & \Circled{6} & $1 \leq x \leq \varphi$ & $(-1,-1)$\\
  & \Circled{7} & $0 \leq x \leq \varphi - 1$ & $(0,-2)$\\
  & \Circled{7} & $\varphi - 1 \leq x \leq 1$ & $(1,-3)$\\
  & \Circled{7} & $1 \leq x \leq 2\varphi - 2$ & $(1,-3)$\\
   & \Circled{7} & $2\varphi - 2 \leq x \leq \varphi$ & $(2,-4)$\\
   & \Circled{8} & $0 \leq x \leq 2\varphi - 3$ & $(2,-2)$\\
  & \Circled{8} & $2\varphi - 3 \leq x \leq \varphi - 1$ & $(3,-3)$\\
  & \Circled{9} & $2 - \varphi \leq x \leq \varphi - 1$ & $(0,-3)$\\
  & \Circled{9} & $2 - \varphi \leq x \leq \varphi - 1$ & $(1,-4)$\\ \hline
\end{tabular}
\quad
\begin{tabular}{|c|c|c|c|}\hline
    Slope & $\Delta_{\Pcal_0}$-line & Domain Restriction & $(a,b)$  \\ \hline\hline
\multirow{7}{*}{$\varphi^2$}   
  & \Circled{10} & $\varphi - 1 \leq x \leq 1$          & $(0,  0)$\\
  & \Circled{11} & $0 \leq x \leq 2-\varphi$            & $(-1,-1)$\\
  & \Circled{11} & $2-\varphi \leq x \leq 4-2\varphi$   & $(-3,-2)$\\
  & \Circled{11} & $4-2\varphi \leq x \leq 1$           & $(-5,-3)$\\
  & \Circled{12} & $\varphi - 1 \leq x \leq 1$          & $(0, -1)$\\
   & \Circled{12} & $1 \leq x \leq 3 - \varphi$         & $(-2,-2)$\\
    & \Circled{12} & $3 - \varphi \leq x \leq \varphi$  & $(-4,-3)$\\
  & \Circled{13} & $0 \leq x \leq 2-\varphi$            & $(-1,-2)$\\
  & \Circled{13} & $2-\varphi \leq x \leq 4-2\varphi$   & $(-3,-3)$\\
  & \Circled{13} & $4-2\varphi \leq x \leq 1$           & $(-5,-4)$\\
   & \Circled{14} & $0 \leq x \leq 5-3\varphi$          & $(-4,-3)$\\
  & \Circled{14} & $5-3\varphi \leq x \leq 2 - \varphi$ & $(-6,-4)$\\\hline
\multirow{14}{*}{$\infty$}   
   & \Circled{15} & $x = 0, 0 \leq y \leq 1$                 & $(-1,0)$\\
   & \Circled{15} & $x = 0, 1 \leq y \leq 2$                 & $(-1,-1)$\\
   & \Circled{15} & $x = 0, 2 \leq y \leq 3$                 & $(-1,-2)$\\
   & \Circled{15} & $x = 0, 3 \leq y \leq 4$                 & $(-1,-3)$\\
   & \Circled{15} & $x = 0, 4 \leq y \leq 3+\varphi$         & $(-1,-4)$\\
   & \Circled{16} & $x = 2-\varphi, 2+\varphi \leq y \leq 4$ & $(-3,-3)$\\
   & \Circled{16} & $x = 2-\varphi, 4 \leq y \leq 3+\varphi$ & $(-3,-4)$\\
   & \Circled{17} & $x = \varphi-1, 0 \leq y \leq 1$         & $(0,0)$\\
   & \Circled{17} & $x = \varphi-1, 1 \leq y \leq 2$         & $(0,-1)$\\ 
   & \Circled{18} & $x = 1, 0 \leq y \leq 1$                 & $(-2,0)$\\
   & \Circled{18} & $x = 1, 1 \leq y \leq 2$                 & $(-2,-1)$\\
   & \Circled{18} & $x = 1, 1+\varphi \leq y \leq  3$        & $(-2,-2)$\\
   & \Circled{18} & $x = 1, 3 \leq y \leq 4$                 & $(-2,-3)$\\
   & \Circled{18} & $x = 1, 4 \leq y \leq 3+\varphi$         & $(-2,-4)$\\ \hline
\end{tabular}
    \caption{Values of $\boldsymbol{n} = (a,b) \in \ZZ^2$ such that $R^{\boldsymbol{n}}$ moves $\Delta_{\Pcal_0}$-lines to some base segment $Z_i$, see Figure~\ref{fig:markov_partition_edges}.}
\label{tab:move_to_0}
\end{table}

It is convenient to split $\Delta_{\Pcal_0}$
as $\Delta_{\Pcal_0}=\Delta_0\cup\Delta_\infty\cup\Delta_\varphi\cup\Delta_{\varphi^2}$
where,
for $i \in \{0,\infty,\varphi,\varphi^2\}$, $\Delta_i$ denotes the union of the slope-$i$ line segments in $\Delta_{\Pcal_0}$ (the \emph{\textbf{slope-$i$ part}} of $\Delta_{\Pcal_0}$). 
Also, we designate certain specific segments $Z_i\subset\Delta_i$ to be the \textbf{\emph{slope-$i$ base segments}} as follows: \begin{itemize}
\item $Z_0$ is the segment from $(0,0)$ to $(\varphi,0)$,
\item $Z_{\infty}$ is the segment from $(\varphi-1,0)$ to $(\varphi-1,1)$,
\item $Z_{\varphi}$ is the segment from $(0,0)$ to $(\varphi-1,1)$,
\item $Z_{\varphi^2}$ is the segment from $(\varphi-1,0)$ to $(1,1)$.
\end{itemize}
For every $i \in \{0,\infty,\varphi,\varphi^2\}$, $Z_i$ is the left most among
the bottom most segment of slope $i$ in the partition. 
These particular segments are also used in the proof of
Theorem~\ref{thm:resolution} in Section~\ref{sec:resolution}.
The next lemma shows that every point in $\Delta_i$ is in the orbit under $R_0$
of a point in the base segment $Z_i$.

\begin{lemma} For each $\boldsymbol{x} \in \Delta_{i}$ where $i \in \{0,\infty,\varphi, \varphi^2\}$, there exists $\boldsymbol{n} = (n_1,n_2) \in \ZZ^2$ with $\|\boldsymbol{n}\| = |n_1| + |n_2| \leq 10$ such that $R_{0}^{\boldsymbol{n}}(\boldsymbol{x}) \in Z_i$. \label{lem:move_to_origin}\end{lemma}

\begin{proof} The proof is established by inspection of the $\Delta_{\Pcal_0}$-lines as numbered in Figure~\ref{fig:markov_partition_edges}. The values of $\boldsymbol{n} \in \ZZ^2$ such that $R_{0}^{\boldsymbol{n}}$ moves segments in $\Delta_i$ to the base segment $Z_i$ are given in Table~\ref{tab:move_to_0}. All of them satisfy $\|\boldsymbol{n}\| = |n_1| + |n_2| \leq 10$.
\end{proof}

The description of $W(\bp)$ is best done when restricting it to vectors
returning to segments of a fixed slope.
Also, from the previous lemma, it can be approximated by restricting it to
vectors returning to the base segments $Z_i$. Thus, for every $\bp \in
\boldsymbol{T}$ and
$i \in \{0,\infty,\varphi,\varphi^2\}$, we define
\begin{align*}
    W_i(\bp) &= \{ \bn \in \ZZ^2 \mid R_{0}^{\bn}(\bp) \in \Delta_i\},\\
    V_i(\bp) &= \{ \bn \in \ZZ^2 \mid R_{0}^{\bn}(\bp) \in Z_i\}.
\end{align*}
which satisfies 
$W(\bp) = W_0(\bp) \cup W_\infty(\bp) \cup W_\varphi(\bp) \cup W_{\varphi^2}(\bp)$.

Also from Lemma~\ref{lem:move_to_origin}, we have
$W_i(\bp) \subset V_i(\bp)+[-10,10]^2 $.
Thus the global structure of $W_i(\bp)$ follows the one of $V_i(\bp)$.
We may now use Proposition~\ref{prop:return-to-a-generic-segment} to deduce
that $V_i(\bp)$ is at a bounded distance from a Euclidean line.

\begin{lemma}\label{lem:orbit-remaining-in-partition-boundary-onlyVi}
    Let
    $\boldsymbol{p} \in \Delta_{\Pcal_0,R_0}$ and
    $i \in \{0,\infty,\varphi,\varphi^2\}$.
    If $V_i(\bp)\neq\varnothing$, then
    $V_i(\bp)$ is at bounded distance from a line of slope $m_i$
    where
    \[
       m_0=0,
       \qquad m_{\infty}=\varphi+3,
       \qquad m_{\varphi}=-3\varphi+2,
       \qquad m_{\varphi^2}=-\varphi+\frac{5}{2}.
    \]
\end{lemma}

\begin{proof} 
    Let $i \in \{0,\infty,\varphi,\varphi^2\}$. 
    Since $V_i(\bp)\neq\varnothing$, then there exists $\bm\in\ZZ^2$ such that
    $\bq=R_{0}^{\bm}(\bp) \in Z_i+\Gamma_0$.
    Note that $V_i(\bp)=V_i(\bq)+\bm$.

    Now let $\boldsymbol{n}\in V_i(\bq)$ so that
    $R_{0}^{\bn}(\bq)\in Z_i+\Gamma_0$.
    Let $\vecomega = (\omega_1,\omega_2)$ be the vector with the direction
    and length of $Z_i$. Because $\bq,\bq+\bn \in Z_i+\Gamma_0$, there exists some $\alpha,\beta
    \in (0,1)$ such that 
    $\bq+\bn = \alpha\vecomega+\Gamma_0$ and $\bq = \beta\vecomega+\Gamma_0$.  
    Let $D=[0,\varphi)\times[0,\varphi+3)$ be a fundamental domain of 
    $\RR^2/(M\ZZ^2)$
    where $M=\left(\begin{smallmatrix}\varphi & 1\\0 & \varphi+3\end{smallmatrix}\right)$.
    Since $D$ is a fundamental domain for the additive group $\Gamma_0=M\ZZ^2$ acting on $\RR^2$,
    there exists $\boldsymbol{g} \in \ZZ^2 $ such that 
    \[
        (\alpha-\beta)\vecomega
        =(\bq+\bn)-\bq = \bn+M\bg
    \]
    with $-1<\alpha-\beta<1$
    which implies $\bn\in K_\vecomega$. Thus $V_i(\bq)\subset K_{\vecomega}$.

Recall that $\vecomega$ is one of the four vectors $(\varphi,0)$, $(0,1)$, $(\varphi-1,1)$, and $(2-\varphi,1)$.
From Proposition~\ref{prop:return-to-a-generic-segment} and
using Equation~\eqref{eq:bu-generic-normal-vector}, we compute the normal vector $\bu$:
    \begin{itemize}
\item when $\vecomega=(1,0)$, we have $a=0$, $b=1$ and $c =0$, which gives 
    $\bu_{0} = (1,0)$,
\item when $\vecomega=(0,1)$, we have $a = b =0$ and $c =1$, which gives 
    $\bu_{\infty} = (\varphi+3,-1)$,
\item when $\vecomega=(\varphi-1,1)$, we have $a = 1$, $b = -1$ and $c =1$, which gives 
    $\bu_{\varphi} = (-\varphi+8,2\varphi-1)$, and
\item when $\vecomega=(2-\varphi,1)$, we have $a = -1$, $b = 2$, and $c =1$ which gives 
    $\bu_{\varphi^2} = (-6\varphi+15,-6)$. 
    \end{itemize} 
    From Proposition~\ref{prop:return-to-a-generic-segment}, 
    there exists $t>0$ such that $K_{\vecomega}\subset F^t$, 
    where $F$ is the 1-dimensional vector space orthogonal to the normal vector $\bu$.
Therefore, 
$V_{i}(\bq) \subset K_{\vecomega}\subset F^t$ where $F$ 
    is a Euclidean 2-dimensional line with slope 
    $m_{i} =0$, $\varphi+3,
    -3\varphi+2,$ or 
    $\varphi+\frac{5}{2}$ 
    if $i = 0, \infty, \varphi$, or $\varphi^2$, respectively.
    Finally since $V_i(\bp)=V_i(\bq)+\bm$,
    there exists $t'>0$ such that
    $V_{i}(\bp)\subset F^{t'}$.
\end{proof}

The same results holds from $W_i(\bp)$.

\begin{proposition}\label{prop:orbit-remaining-in-partition-boundary}
    Let
    $\boldsymbol{p} \in \Delta_{\Pcal_0,R_0}$ and
    $i \in \{0,\infty,\varphi,\varphi^2\}$.
    If $W_i(\bp)\neq\varnothing$, then
    $W_i(\boldsymbol{p})$ is at bounded distance from a line of slope $m_i$
    where
    \[
       m_0=0,
       \qquad m_{\infty}=\varphi+3,
       \qquad m_{\varphi}=-3\varphi+2,
       \qquad m_{\varphi^2}=-\varphi+\frac{5}{2}.
    \]
\end{proposition}

\begin{proof}
    Let $i \in \{0,\infty,\varphi,\varphi^2\}$ and 
    $\bp \in \Delta_i$.
From the definition of $V_i(\bp)$, we have $V_i(\bp) \subset W_i(\bp)$, but
from Lemma~\ref{lem:move_to_origin}, we also have
    \[
        W_i(\bp)
        \subset
        V_i(\bp)+[-10,10]^2.
    \]
    From Lemma~\ref{lem:orbit-remaining-in-partition-boundary-onlyVi},
    $V_i(\bp)$ is at bounded distance from a line of slope $m_i$.
    Thus the same holds for $W_i(\bp)$.
\end{proof}

\subsection{Proof of Theorem~\ref{thm:main}}

It remains to describe $W(\bp)$ from the restricted $W_i(\bp)$.
The next lemma shows that only the orbit of the origin intersect segments of
different slopes in $\Delta_{\Pcal_0}$.

\begin{lemma} Let $\boldsymbol{p} \in \Delta_{\Pcal_0} \subset \boldsymbol{T}$ and let $\boldsymbol{0} = (0,0) \in \boldsymbol{T}$. If $\mathcal{O}_{R_0}(\boldsymbol{p})$ contains points from $\Delta_i$ and $\Delta_j$ with $i \neq j$, then $\bp \in \mathcal{O}_{R_{0}}(\boldsymbol{0})$.\label{lem:origin_orbit}\end{lemma}

\begin{proof} Because $\{\mathcal{O}_{R_{0}}(\boldsymbol{x}) : \boldsymbol{x} \in \boldsymbol{T}  \}$ partitions $\boldsymbol{T}$, we need only show that $\mathcal{O}_{R}(\boldsymbol{p}) \cap \mathcal{O}_{R}(\boldsymbol{0}) \neq \varnothing$. To that end, let $\boldsymbol{x}\in \mathcal{O}_{R_{0}}(\boldsymbol{p}) \cap \Delta_i$ and $\boldsymbol{y}\in \mathcal{O}_{R_{0}}(\boldsymbol{p}) \cap \Delta_j$, and without loss of generality, suppose that $i \neq \infty$. By Lemma~\ref{lem:move_to_origin}, we may assume without loss of generality that $\boldsymbol{x} \in Z_i$ and $\boldsymbol{y} \in Z_j$.  Then $\boldsymbol{x} = \beta (1,i) + (0,1)$ for some $\beta \in \RR$ and $\boldsymbol{y} = \alpha(d_1,d_2) + (0,1)$ for some $\alpha \in \RR$, where $d_1 = 0$ and $d_2 = 1$ if $j = \infty$ and $d_1 = 1$ and $d_2 = j$ if $j \neq \infty$. We know that $\boldsymbol{y} \in \mathcal{O}_{R}(\boldsymbol{x})$, so there exists some and $\boldsymbol{n} = (n_1,n_2) \in \ZZ^2$
such that $\boldsymbol{y} = R_{0}^{\boldsymbol{n}}(\boldsymbol{x}) = \boldsymbol{x} + (n_1,n_2) \pmod{\Gamma_0}$. Thus there exists some and $\boldsymbol{g} = (g_1,g_2) \in \ZZ^2$ such that \[\alpha(d_1,d_2) = \beta(1,i) + (n_1,n_2) + g_1(\varphi,0) + g_2(1,\varphi+3), \] from which we obtain the equation \[
     \left<\beta(1,i) + (n_1,n_2) + g_1(\varphi,0) + g_2(1,\varphi+3),(d_2,-d_1)\right> = 0.\] Solving this equation for $\beta$ gives \begin{equation*}\label{eqn:alpha_vals}
         \beta = \frac{d_1(g_2(\varphi+3)+n_2)-d_2(g_1 \varphi + g_2 + n_1)}{d_2-d_1i}.
     \end{equation*} Next, to have $\boldsymbol{x} = \beta(1,i) + (0,1) \in \mathcal{O}_{R_{0}}(\boldsymbol{0})$, there must exist $(a,b),(z_1,z_2) \in \ZZ^2$ such that \begin{equation}\label{eq:origin_orbit}\beta(1,i) = a(\varphi,0) + b(1,\varphi+3) + (z_1,z_2).\end{equation} In Table~\ref{tab:origin_orbit}, we give the various possible values of $i$, $j$, $d_1$, and $d_2$ and the resulting integers $a,b,z_1,z_2$ making Equation~\eqref{eq:origin_orbit} valid, and thus we have $\mathcal{O}_{R_{0}}(\boldsymbol{p})= \mathcal{O}_{R_{0}}(\boldsymbol{x}) = \mathcal{O}_{R_{0}}(\boldsymbol{y}) = \mathcal{O}_{R_{0}}(\boldsymbol{0})$.\end{proof}

\begin{table}[h]
\centering
\begin{tabular}{ |c|c|c|c|c|c|c|c|c|} 
 \hline
 $i$ & $j$ & $d_1$ & $d_2$ & $a$ & $b$ & $z_1$ & $z_2$ \\\hline
0 & $\infty$  & 0 & 1 & $-g_1$ & 0 & $-g_2-n_1$ & 0 \\
$\varphi$ & $\infty$ & 0 & 1 &  $-g_1$ & $-g_1-g_2-n_1$ & $g_1$  & $2g_1 +3g_2 + 3n_1$\\
 $\varphi^2$ & $\infty$  & 0 & 1 &  $-g_1$ & $-2g_1-g_2-n_1$ & $2g_1$ & $5g_1 +2g_2 + 2n_1$ \\
$\varphi$ & 0 & 1 & 0  & $-3g_2-n_2$ & $-g_2$ & $3g_2 + n_2$ & $-n_2$  \\
$\varphi^2$ & 0 & 1 & 0 & $2g_2+n_2$ & $-g_2$ & $-4g_2 - 2n_2$ & $-n_2$ \\
$\varphi$ & $\varphi^2$ & 1 & $\varphi^2$ &  $-2g_1-n_1$ & $-3g_1+2g_2-2n_1+n_2$ & $2g_1 + n_1$ & $7g_1 - 6g_2 + 5n_1 - 3n_2$   \\\hline
\end{tabular}\caption{}\label{tab:origin_orbit}
\end{table}

\begin{lemma}\label{lem:Wp=W_ip}
    If $\boldsymbol{p} \in \Delta_{\Pcal_0,R_0} \setminus \mathcal{O}_{R_0}(\boldsymbol{0})$,
    then there exists $i \in \{0,\infty,\varphi,\varphi^2\}$ such that
    $W(\bp)= W_i(\bp)$.
\end{lemma}

         \begin{proof}
             For every $i \in \{0,\infty,\varphi,\varphi^2\}$,
             we have 
             $W_i(\bp)\subset W(\bp)$.
             If there exists $i,j\in \{0,\infty,\varphi,\varphi^2\}$
             with $i\neq j$
             such that 
             $W_i(\bp) \neq\varnothing$ and
             $W_j(\bp) \neq\varnothing$, then
             there exists $\bn,\bm\in\ZZ^2$ such that
             $R_0^\bn(\bp)\in\Delta_i$
             and
             $R_0^\bm(\bp)\in\Delta_j$.
From Lemma~\ref{lem:origin_orbit},
$\bp \in \mathcal{O}_{R_{0}}(\boldsymbol{0})$ which is a contradiction.
Thus there exists a unique $i\in \{0,\infty,\varphi,\varphi^2\}$ such that
    $W(\bp)= W_i(\bp)$.
\end{proof}

\begin{lemma}\label{lem:Dxy=Wip}
    Let $x,y\in\Xcal_{\Pcal_0,R_0}$ such that $x\neq y$ and $f(x)=f(y)=\bp\in\Delta_{\Pcal_0,R_0}$.
    If $\bp\in\mathcal{O}_{R_0}(\Delta_i)\setminus\mathcal{O}_{R_0}(\boldsymbol{0})$,
    for some $i \in \{0,\infty,\varphi,\varphi^2\}$,
    then $D(x,y)=W(\bp)=W_i(\bp)$.
\end{lemma}

\begin{proof}
    From Lemma~\ref{lem:strategy-to-get-Conway-worms}
    we have $D(x,y)\subseteq W(\bp)$.
    From Lemma~\ref{lem:Wp=W_ip}, we have
    then there exists $i \in \{0,\infty,\varphi,\varphi^2\}$ such that
    $W(\bp)=W_i(\bp)$.

    We now want to show $ W_i(\bp)\subseteq D(x,y)$.
    Let $\bn\in W_i(\bp)$.
    By contradiction, suppose that
    $\bn\notin D(x,y)$.
    Thus $x_\bn=y_\bn$.
    This implies that configurations $x$ and $y$ are limits
    $x=\lim_{k\to\infty} x^{(k)}$
    and $y=\lim_{k\to\infty} y^{(k)}$
    of configurations $x^{(k)},y^{(k)}\in\Xcal_{\Pcal_0,R_0}$, $k\in\NN$,
    such that $f(x^{(k)})$ and $f(y^{(k)})$ are approaching
    a segment of slope~$i$ from the same side 
    (that is, from the left or from the right if the segment is vertical, etc.).

    Now let $\bm\in D(x,y)$.
    By definition,
    $x_\bm$ 
    ($y_\bm$, resp.)
    is the label of the atom of the partition containing the point 
    $R_0^\bm(f(x^{(k)}))$ 
    ($R_0^\bm(f(y^{(k)}))$, resp.)
    for arbitrarily large $k$.
    Since both sequence are approaching a segment of slope $i$ from the same side
    toward the point $R_0^{\bm}(\bp)$ which is not contained in segment of other slopes 
    of the partition,
    we must have $x_\bm=y_\bm$.
    Thus $\bm\notin D(x,y)$ which is a contradiction.
    We obtain that $W_i(\bp)\subseteq D(x,y)$
             and we conclude that $D(x,y)=W(\bp)=W_i(\bp)$.
         \end{proof}

\begin{figure}[h]
\begin{center}
\includegraphics[width=0.75\textwidth]{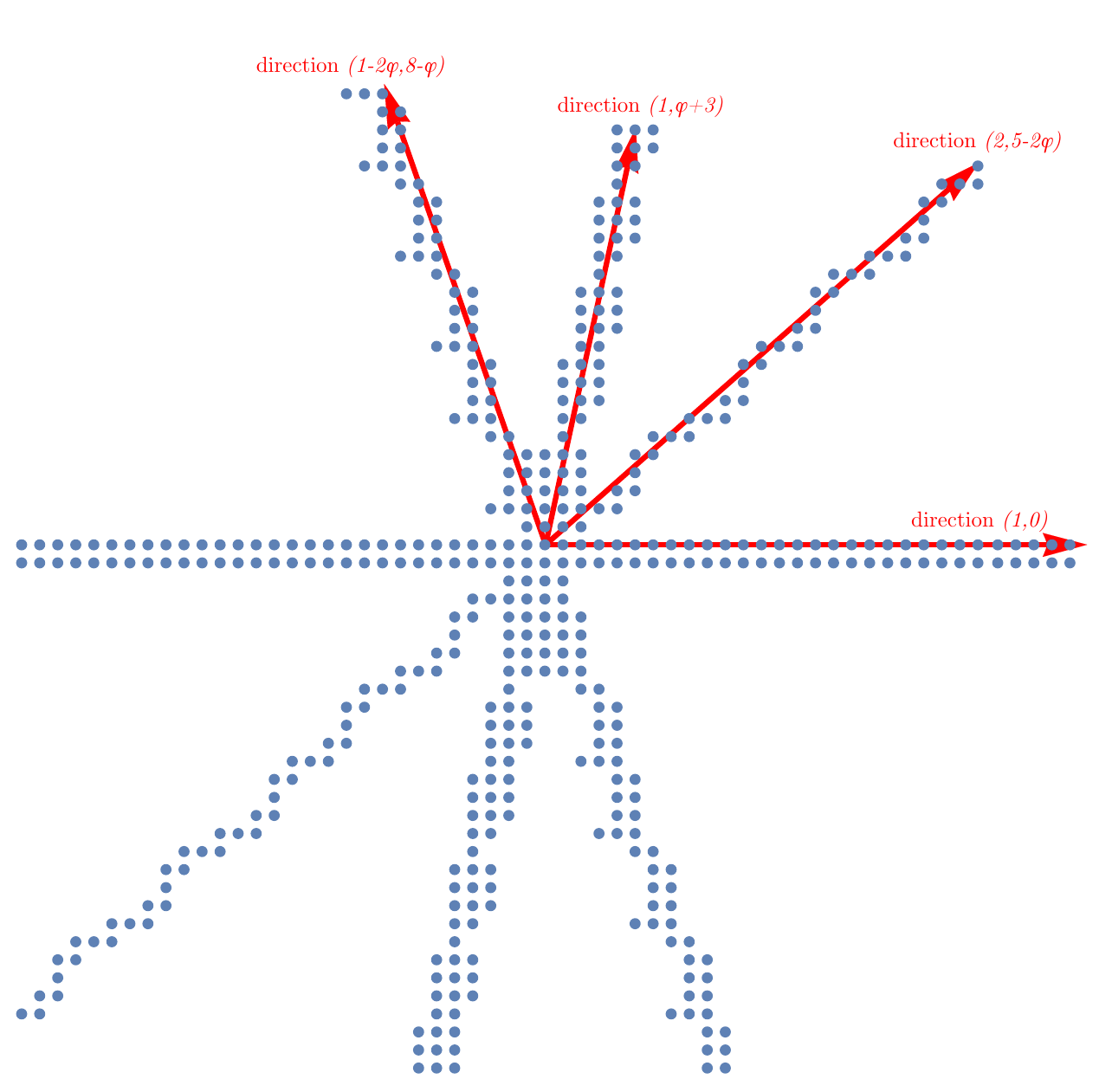}\caption{All four
    nonexpansive directions shown in Theorem~\ref{thm:main}
    are exhibited by the set $W(\boldsymbol{0})$.}
    \label{fig:all_four}
\end{center}
\end{figure}

We may now prove the main result.

\begin{proof}[Proof of Theorem~\ref{thm:main}] By Lemma~\ref{lem:non_exp_dense}, 
    if $H$ is a nonexpansive half-space
    for the subshift $\mathcal{X}_{\mathcal{P}_0,R_0}$,
    then there exist $x,y\in\mathcal{X}_{\mathcal{P}_0,R_0}$
    such that 
    $x|_{H\cap \Z^2}=y|_{H\cap \Z^2}$,
    $x\neq y$
    and $f(x)=f(y)=\bp\in\Delta_{\mathcal{P}_0,R_0}$.

    First assume that
    $\bp\notin\mathcal{O}_{R_0}(\boldsymbol{0})$.
    From Lemma~\ref{lem:Dxy=Wip}, we have
    that there exists $i \in \{0,\infty,\varphi,\varphi^2\}$ such that
    $D(x,y)= W(\bp)=W_i(\bp)$.
    From Proposition~\ref{prop:orbit-remaining-in-partition-boundary},
    $W_i(\boldsymbol{p})$ 
    is at bounded distance from a line of slope $m_i$
    where $m_0=0$,
       $m_{\infty}=\varphi+3$,
       $m_{\varphi}=-3\varphi+2$ and
       $m_{\varphi^2}=-\varphi+\frac{5}{2}$.
    Thus there exists $t>0$ such that $D(x,y)\subset F^t$ where
    $F\subset\RR^2$ is the 1-dimensional vector space of slope $m_i$.
    We conclude that $F$ is nonexpansive 
    for the $\ZZ^2$-shift action $\sigma$ on $\Xcal_{\Pcal_0,R_0}$
    and $D(x,y)$ is a Conway worm associated to $F$.

    Now suppose that
    $\boldsymbol{p}\in\mathcal{O}_{R_0}(\boldsymbol{0})$.
    We have from Lemma~\ref{lem:strategy-to-get-Conway-worms} that
    \[
    D(x,y)\subset W(\bp)
    = W_0(\bp) \cup W_\infty(\bp) \cup W_\varphi(\bp) \cup W_{\varphi^2}(\bp).
    \]
    If there exists a unique $i\in \{0,\infty,\varphi,\varphi^2\}$ 
    such that $D(x,y) \cap W_i(\bp)$ is infinite, 
    then 
    we conclude as above 
    from Proposition~\ref{prop:orbit-remaining-in-partition-boundary}
    that $D(x,y)$ is at bounded distance from a line of slope $m_i$.
    If there exists two $i,j\in \{0,\infty,\varphi,\varphi^2\}$ 
    with $i\neq j$ such that
    $D(x,y) \cap W_i(\bp)$ and
    $D(x,y) \cap W_j(\bp)$ are infinite, 
    then
    we obtain a contradiction because
    $x$ and $y$ must be equal on the half-space $H\cap\ZZ^2$.
\end{proof}

The nonexpansive directions 
mentioned in Theorem~\ref{thm:main}
can be seen in the illustration of $W(\boldsymbol{0})$, see
Figure~\ref{fig:all_four}.

\section{Resolutions of Conway Worms in the Jeandel-Rao Wang shift}\label{sec:resolution}

Let $x,y\in\Xcal_{\Pcal_0,R_0}$ be two configurations in the Jeandel-Rao Wang
shift such that $x\neq y$
and $f(x)=f(y)=\bp\in\Delta_{\mathcal{P}_0,R_0}$.
If $\bp\notin\mathcal{O}_{R_0}(\boldsymbol{0})$, then
    from Lemma~\ref{lem:Dxy=Wip},
    there exists $i \in \{0,\infty,\varphi,\varphi^2\}$ such that
    \[
        D(x,y)=\{\bn\in \ZZ^2\mid x_\bn \neq y_\bn\}
        =\{ \bn \in \ZZ^2 \mid R_0^\bn(\bp) \in \Delta_i\}
        =W_i(\bp).
    \]
From Proposition~\ref{prop:orbit-remaining-in-partition-boundary},
the set $D(x,y)$ is a Conway worm associated to some nonexpansive subspace $F$
whose slope was computed in the previous section.
The goal of this section is to describe the Conway worms $D(x,y)$ and
their resolutions $x|_{D(x,y)}$ and $y|_{D(x,y)}$ within the minimal subshift
$\Xcal_{\Pcal_0,R_0}$ of the Jeandel-Rao Wang shift.

In particular, we show that 
the Conway worms
follow the structure of a two-sided
Sturmian word. 
The resolutions of the Conway worms are coded by a sequence in the Fibonacci
subshift, a Sturmian (mechanical) sequence whose ratio of letter frequencies is the golden
mean.
For every $\alpha\in\RR\setminus\QQ$ with $0<\alpha<1$ and every $n\in\ZZ$, let
\begin{align}
    \label{eq:sturmian1}
    s_{\alpha,\rho}(n) &= \lfloor \alpha(n+1) + \rho \rfloor - \lfloor \alpha n + \rho\rfloor,\\
    \label{eq:sturmian2}
    s^{\prime}_{\alpha,\rho}(n) &= \lceil \alpha(n+1) + \rho \rceil - \lceil \alpha n + \rho\rceil.
\end{align}
be the two-sided \emph{lower (resp. upper) mechanical words} with slope
$\alpha$ and intercept $\rho$. Mechanical words are one of the many equivalent
definitions of Sturmian sequences, see \cite{lothaire_2002}.
A mechanical sequence is the binary encoding of a path in the grid $\ZZ^2$
which is a digital approximation of an Euclidean line of slope $\alpha$. In what follows,
it is convenient to split the support of this path according to the value of
$s_{\alpha,\rho}$ at position $n\in\ZZ$. Thus, let
    \begin{align}
        \label{eq:H}
        H_{\alpha,\rho} &= \{(n, \lfloor\alpha n + \rho \rfloor) \in \ZZ^2 \,|\, s_{\alpha,\rho}(n) = 0\},\\
        \label{eq:V}
        V_{\alpha,\rho} &= \{(n, \lfloor\alpha n + \rho \rfloor) \in \ZZ^2 \,|\, s_{\alpha,\rho}(n) = 1\}.
    \end{align}

In this section, we prove the following result about the resolution
of the Conway worms within the Jeandel-Rao Wang shift.
Upon inspection of the structure of $D(x,y)$, we observe there exist subsets
$B,G\subset\ZZ^2$ that can be arranged in such a way to create $D(x,y)$, see
Figure~\ref{fig:other-3-Conway-worms-in-JR} 
and Figure~\ref{fig:all-Conway-worms-in-JR-in-one-image}.
The structure of that arrangement, along with details about the two resolutions
of the Conway worm are given in the following result.

\begin{maintheorem} 
    \label{thm:resolution}
    Let $x,y\in\Xcal_{\Pcal_0,R_0}$ be two configurations in the Jeandel-Rao Wang
    shift such that $x\neq y$
    and $f(x)=f(y)=\bp\in\Delta_{\Pcal_0,R_0}\setminus\mathcal{O}_{R_0}(\boldsymbol{0})$.
    There exist 
    finite subsets $B,G\subset\ZZ^2$ and
    a matrix $M$
    such that the Conway worm is
    \[
        D(x,y)
        = \bk + 
        \big(\left(M H_{\alpha,\rho} + B\right) 
        \cup
        \left(M V_{\alpha,\rho} + G\right) \big)
    \]
    for some $\bk\in\ZZ^2$, $0<\rho<1$
    where 
    $\alpha = 2 - \varphi=2-\frac{1+\sqrt{5}}{2}$.
    Moreover, 
    there exist 
    patterns $b^{-}$, $b^{+}$ of support $B$
    and patterns $g^{-}$, $g^{+}$ of support $G$
    such that the two resolutions $x|_{D(x,y)}$ and $y|_{D(x,y)}$ of the Conway
    worm are constructed using these, that is,
    \begin{align*}
      x|_{\bk+M \boldsymbol{h}+B} = b^{+}
              \quad
              \text{ and }
              \quad
      x|_{\bk+M \boldsymbol{v}+G} = g^{+}\\
      y|_{\bk+M \boldsymbol{h}+B} = b^{-}
              \quad
              \text{ and }
              \quad
      y|_{\bk+M \boldsymbol{v}+G} = g^{-}
    \end{align*}
    for every $\boldsymbol{h} \in H_{\alpha,\rho}$ 
          and $\boldsymbol{v} \in V_{\alpha,\rho}$.
\end{maintheorem}

\subsection{The Fibonacci subshift appearing vertically in the Jeandel-Rao Wang shift}

As observed by Jeandel and Rao \cite{JR1}, in every configuration in the
Jeandel-Rao Wang shift, the tiles labeled by 0 or 1 appear in rows where only
tiles 0 and 1 can appear. Also, the distance between these rows is 4 or 5.
Therefore, every configurations can be split by infinite horizontal strips of height 4 or 5
with the tiles 0 or 1 appearing at the bottom of each infinite strip.
To prove aperiodicity of the Jeandel-Rao Wang shift, 
Jeandel and Rao proved that the possible sequences of 4 and 5 is exactly the language
of the Fibonacci word \cite{berstel_mots_1980} over the alphabet $\{4,5\}$.
In other words, a biinfinite sequence in $\{4,5\}^\ZZ$ is in the Fibonacci subshift
generated by the substitution $4\mapsto 5,5\mapsto 54$
if and only if it is the sequence of heights of horizontal strips of a valid configuration in
the Jeandel-Rao Wang shift.

These strips are illustrated by the color of the tiles in Figures
\ref{fig:other-3-Conway-worms-in-JR} and their heights is shown in the right margin of
Figure~\ref{fig:all-Conway-worms-in-JR-in-one-image}.
The sequence of heights of complete blocks seen in the figures from bottom to top is either
\begin{align*}
    &5,4,5,5,4,5 \text{ or }\\
    &5,5,4,5,4,5
\end{align*}
which could be extended to longer biinfinite sequences
\begin{align*}
    &\dots
 ,5,4,5,5,4,5,4,5,5,4,
 \underline{5,4,5,5,4,5},
 5,4,5,4,5,5,4,5,5,4,\dots %
    \text{ or }\\
    &\dots
 ,5,4,5,5,4,5,4,5,5,4,
 \underline{5,5,4,5,4,5},
 5,4,5,4,5,5,4,5,5,4,\dots %
\end{align*}
where the heights of complete blocks seen in the figures are underlined.
Notice that flipping the Conway worm of slope 0 from one resolution to the other
flips a $4,5$ into a $5,4$ in the above sequence, see
Figure~\ref{fig:all-Conway-worms-in-JR-in-one-image}.
After replacing $5$ by $\texttt{0}$
and $4$ by $\texttt{1}$,
the above sequences correspond to the two biinfinite Fibonacci word \cite{berstel_mots_1980},
that is,
the lower and upper Sturmian sequences of slope $\alpha=\varphi^{-2}=2-\varphi$:
\begin{align*}
    s_{\varphi^{-2},0}&=  \dots \texttt{01001010010}\fbox{\texttt{10}}.\texttt{0100101001001}\dots\\
    s'_{\varphi^{-2},0}&= \dots \texttt{01001010010}\fbox{\texttt{01}}.\texttt{0100101001001}\dots
\end{align*}
Thus the structure of the Fibonacci word, a particular example of a Sturmian
sequence, is required in the sequence of Lemmas to follow.

For $\alpha \in (0,1)$, define the rotation $T_{\alpha}$ of $[0,1)$ by \begin{equation}T_{\alpha}(\rho) = \rho + \alpha \text{ (mod } 1 \text{)}\label{eqn:IET-T}\end{equation} For $\rho \in [0,1)$, $T_\alpha$ encodes biinfinite words $u_{\rho} = \ldots u_{-3} u_{-2} u_{-1} u_0 u_1 u_2 u_3 \ldots$ in the alphabet $\{0,1\}$ according to the formula \[u_n = \begin{cases} 
0 & \text{ if } T_{\alpha}^{n}(\rho) \in [0,1-\alpha) \\  
1 & \text{ if } T_{\alpha}^{n}(\rho) \in [1-\alpha,1)\end{cases}. \] 
We wish to connect $T_\alpha$ to a geometric line of slope $\alpha$, and happily such a connection is well-known.
The words $s_{\alpha,\rho}$ and $s^{\prime}_{\alpha,\rho}$ 
from Equations~\eqref{eq:sturmian1} and~\eqref{eq:sturmian2}
are associated with polygonal paths with corners in $\ZZ^2$ that approximate the line $y = \alpha x + \rho$; the lower mechanical path contains the $\ZZ^2$ points $P_{\alpha,\rho}(n) = (n, \lfloor\alpha n + \rho \rfloor)$ and the upper mechanical path contains the points $P^{\prime}_{\alpha,\rho}(n)  = (n,\lceil\alpha n + \rho\rceil)$. 
It is seen that $s(n) = 0$ when $P_{\alpha,\rho}$ is horizontal between $P_{\alpha,\rho}(n)$ and $P_{\alpha,\rho}(n+1)$, and $s(n) = 1$ when $P_{\alpha,\rho}$ angles upward between $P_{\alpha,\rho}(n)$ and $P_{\alpha,\rho}(n+1)$ (and similarly for the relationship between $s^{\prime}_{\alpha,\rho}$ and $P^{\prime}_{\alpha,\rho}$). 
We note that when $\alpha$ is irrational, $s_{\alpha,\rho} = s^{\prime}_{\alpha,\rho}$ most of the time (except at possibly 2 consecutive values of $n$) and for the application in this article, they are equal all the time, so we shall refer only to $s$ and $P$ from here on.  
We get the following nice fact from \cite{lothaire_2002}: \begin{equation}u_n = s_{\alpha,\rho}(n)\label{eqn:T=s}\end{equation} for all $n \in \ZZ$. 
That is, the word $u_\rho$ encoded by $T_{\alpha}$ on input $\rho$ is the same word encoded by $s_{\alpha,\rho}$. 
Also, notice that for all $n \in \ZZ^2$, \begin{equation} P_{\alpha,\rho}(n) = k(1,0) + \ell(1,1)\label{eqn:num_hor_diag}\end{equation} where $k$ is the number of horizontal segments and $\ell$ is the number of diagonal segments in the mechanical line between $(0,0)$ and $P_{\alpha,\rho}(n)$.
The set $P_{\alpha,\rho}$ is partitioned into two sets of points, $H_{\alpha,\rho}$ and $V_{\alpha,\rho}$, see Equations~\eqref{eq:H} and~\eqref{eq:V};
$H_{\alpha,\rho}$ is the set of starting points of horizontal segments in the lower mechanical path and $V_{\alpha,\rho}$ is the starting points of diagonal segments in the lower mechanical path. 
 
 \subsection{Patterns $B$ of height 5 and patterns $G$ of height 4}
Next we want to describe the resolutions of the Conway worms. 
In what follows, except for slope $i=0$, 
$B\subset\ZZ^2$ will be the support of a pattern of height $5$ and 
$G\subset\ZZ^2$ will be the support of a pattern of height $4$, see
Figure~\ref{fig:other-3-Conway-worms-in-JR} and 
Figure~\ref{fig:all-Conway-worms-in-JR-in-one-image}.

We assume that the support $B$ is shifted in such a way that
$(0,0)\in B$, 
$\min\{j\mid (i,j) \in B\}=0$
and $\min\{i\mid (i,j) \in B \text{ and } j=0\}=0$
and similarly for $G$.
The support $B$ and $G$ will be translated by
some \emph{placement vectors}
$\boldsymbol{b} = (b_1,b_2)$ and $\boldsymbol{g}=(g_1,g_2)$.
The placement vectors $\boldsymbol{b}$ and $\boldsymbol{g}$ give the translation from the origin 
of a $B$ (resp. $G$) pattern in a Conway worm to the origin of the pattern that follows.

The points in $H_{\alpha,\rho}$ correspond to starting points of $B$ patterns and the points of $V_{\alpha,\rho}$ correspond to the starting points of $G$ patterns, but the points of $H$ and $V$ must be appropriately rescaled by the placement vectors to account for the sizes of $B$ and $G$. We accomplish this by utilizing Equation~\eqref{eqn:num_hor_diag}: Notice that \begin{equation}\left(\begin{smallmatrix} b_1 & g_1 - b_1 \\ b_2 & g_2 - b_2 \end{smallmatrix}\right)\left(k \left(\begin{smallmatrix} 1 \\ 0 \end{smallmatrix}\right)  + \ell \left(\begin{smallmatrix} 1 \\ 1 \end{smallmatrix}\right) \right) = k \boldsymbol{b} + \ell \boldsymbol{g}.\label{eqn:shifted_patterns}\end{equation}  Thus the set of starting points for all $B$ patterns in the Conway worm will be of the form \begin{equation} \left(\begin{smallmatrix} b_1 & g_1 - b_1 \\ b_2 & g_2-b_2 \end{smallmatrix}\right) H_{\alpha,\rho} \label{eqn:B_places}\end{equation} and the set of starting points for all $G$ patterns in the Conway worm will be of the form  \begin{equation} \left(\begin{smallmatrix} b_1 & g_1 - b_1 \\ b_2 & g_2 - b_2 \end{smallmatrix}\right) V_{\alpha,\rho}.\label{eqn:G_places}\end{equation} 
    
\subsection{Proof of Theorem~\ref{thm:resolution}}

We proceed by separately considering the four possible slopes of
$\Delta_{\Pcal_0}$-lines on which $\boldsymbol{p}$ lies. 

\subsection*{Case: $\boldsymbol{p} \in \Delta_{\varphi^2}$.}\mbox{}\\

\begin{lemma} 
    Let $x,y\in\Xcal_{\Pcal_0,R_0}$ be two configurations in the Jeandel-Rao Wang
    shift such that $x\neq y$
    and $f(x)=f(y)=\bp\in\Delta_{\varphi^2}\setminus\mathcal{O}_{R_0}(\boldsymbol{0})$.
    Suppose that $\bp$ lies on the segment $Z_{\varphi^2}$ with endpoints $(\varphi^{-1},0)$ 
            and $(1,1)$, that is, let 
    $\bp=(1-\rho)(\varphi^{-1},0)+\rho(1,1)$ for some $\rho\in\RR$ such that 
    $0<\rho<1$.
    Then there exist 
    finite subsets $B_{\varphi^2},G_{\varphi^2}\subset\ZZ^2$ such that the Conway worm is
    \begin{equation}
        W_{\varphi^2}(\bp) = \left[\left(\begin{smallmatrix} 6 & -2 \\ 5 & -1 \end{smallmatrix}\right) H_{\alpha,\rho} + B_{\varphi^2}\right] 
        \cup
        \left[\left(\begin{smallmatrix} 6 & -2 \\ 5 & -1 \end{smallmatrix}\right) V_{\alpha,\rho} + G_{\varphi^2}\right] \label{eqn:orbit_formula} \end{equation} 
            where $\alpha=\varphi^{-2}=2-\varphi$.
    Moreover, 
    there exist 
        patterns $b^{-}_{\varphi^2}$, $b^{+}_{\varphi^2}$ of support $B_{\varphi^2}$
        and patterns $g^{-}_{\varphi^2}$, $g^{+}_{\varphi^2}$ of support $G_{\varphi^2}$
    such that the two resolutions $x|_{D(x,y)}$ and $y|_{D(x,y)}$ of the Conway
    worm are constructed using these, that is,
    \begin{align*}
      x|_{\left(\begin{smallmatrix} 6 & -2 \\ 5 & -1 \end{smallmatrix}\right) 
              \boldsymbol{h}+B_{\varphi^2}} = b^{+}_{\varphi^2}
              \quad
              \text{ and }
              \quad
      x|_{\left(\begin{smallmatrix} 6 & -2 \\ 5 & -1 \end{smallmatrix}\right) 
              \boldsymbol{v}+G_{\varphi^2}} = g^{+}_{\varphi^2}\\
      y|_{\left(\begin{smallmatrix} 6 & -2 \\ 5 & -1 \end{smallmatrix}\right) 
              \boldsymbol{h}+B_{\varphi^2}} = b^{-}_{\varphi^2}
              \quad
              \text{ and }
              \quad
      y|_{\left(\begin{smallmatrix} 6 & -2 \\ 5 & -1 \end{smallmatrix}\right) 
              \boldsymbol{v}+G_{\varphi^2}} = g^{-}_{\varphi^2}
    \end{align*}
    for every $\boldsymbol{h} \in H_{\alpha,\rho}$ 
          and $\boldsymbol{v} \in V_{\alpha,\rho}$.
\label{lem:phi-sq-rot-new}\end{lemma}

\begin{proof}
    Assume $\boldsymbol{p}$ lies on the $\Delta_{\varphi^2}$-line from $P=(\varphi -1,0)$ to $Q=(1,1)$. Divide $\overline{PQ}$ into two segments, one labeled $B$ of length $b =\sqrt{15 - 9 \varphi}$ starting at $P$ and another labeled $G$ of length $g = \sqrt{39 - 24 \varphi}$ starting where $B$ ends.  
We illustrate in Figure~\ref{fig:IET_phi_sq} the orbit of the point $\bp$ under the
    $\ZZ^2$-action $R_0$ which stays inside $\Delta_{\varphi^2}+\Gamma_0$ until it returns
    to the original segment $\overline{PQ}$.
    It reveals a few key observations: \begin{itemize} 
\item If $\boldsymbol{p}$ lies on the segment labeled $G \subseteq \overline{PQ}$ and 
    \[
G_{\varphi^2} = \{(0,0),(0,1),(1,1),(1,2),(2,2),(3,2),(3,3), (4,3)\},
    \]
    then $\boldsymbol{p} + G_{\varphi^2}$ traces out a set of points starting at $\boldsymbol{p}$ and returning $\boldsymbol{p}$ to $\overline{PQ}$ in the torus $\boldsymbol{T}$.
\item If $\boldsymbol{p}$ lies on the segment labeled $B \subseteq \overline{PQ}$ and
    \[
B_{\varphi^2} = \{(0,0),(0,1),(1,1),(1,2),(2,2),(3,2),(3,3), (4,3), (5,3),(5,4),(6,4)\},
    \]
    then $\boldsymbol{p} + B_{\varphi^2}$ traces out a set of points starting at $\boldsymbol{p}$ and returning $\boldsymbol{p}$ to $\overline{PQ}$ in $\boldsymbol{T}$.
\item The action of moving $\boldsymbol{p}$ in this way is captured by a rotation $T$. Indeed, observe that $ b/(b+g) = \varphi - 1$ and $ g/(b+g) = 2 - \varphi$, so that, after scaling by $1/(b+g)$, we can see that the exchange of intervals $B$ and $G$ is captured by the rotation $T_\alpha$ where $\alpha = 2 - \varphi = (3 - \sqrt{5})/2$. In this rotation, the point $\boldsymbol{p}$ corresponds to $\rho = d(\boldsymbol{p},(\varphi-1,0))/|\overline{PQ}|$, and in the encoding $u_\rho$ of $\rho$, 0 corresponds to $B$ and 1 corresponds to $G$. Thus $T_\alpha$ encodes two-sided Sturmian words in the alphabet $\{B,G\}$.

\end{itemize}

\begin{figure}[h]
\begin{center}
    \includegraphics[width=1\textwidth]{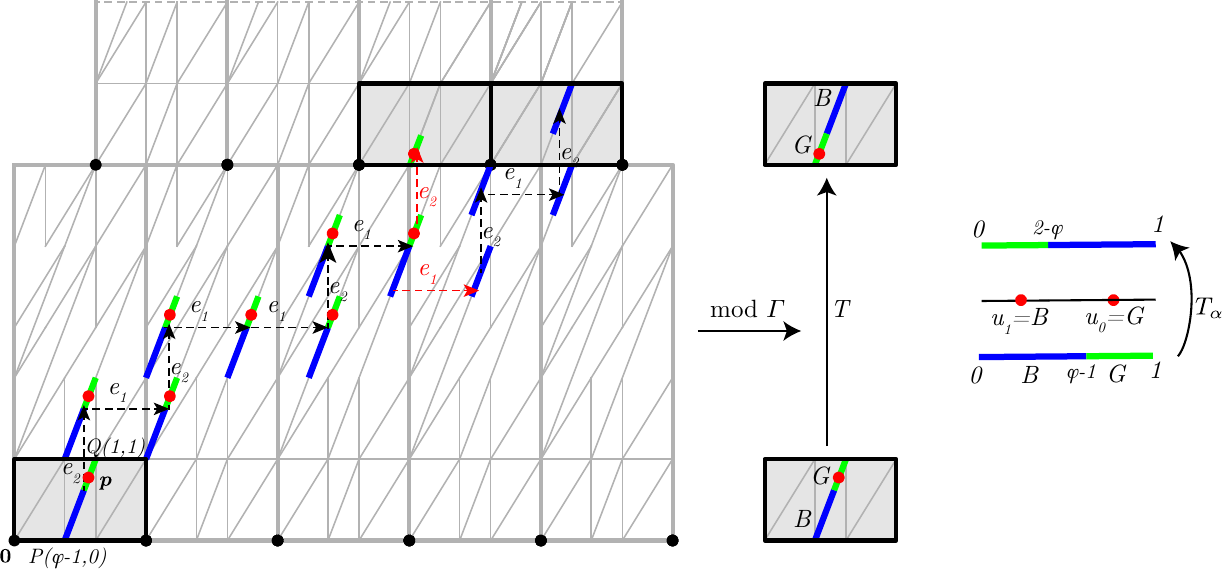}\caption{The $\ZZ^2$ action induces an exchange of the intervals (or rotation of) $B$ and $G$. A point $\boldsymbol{p}$ on the segment $\overline{PQ}$ from $(\varphi - 1,0)$ to $(1,1)$ will return to $\overline{PQ}$ in a manner captured by the rotation $T$ of $[0,1]$.}\label{fig:IET_phi_sq}
\end{center}
\end{figure}

    Next, we define the patterns 
    $b^{+}_{\varphi^2}$ and $b^{-}_{\varphi^2}$ of support $B_{\varphi^2}$
    and the patterns
    $g^{+}_{\varphi^2}$ and $g^{-}_{\varphi^2}$ of support $G_{\varphi^2}$,
    as depicted in Figure~\ref{fig:blocks-phi-sq}. 
    The patterns 
    $b^{+}_{\varphi^2}$ 
    and
    $g^{+}_{\varphi^2}$ 
    are encoding the behavior of points approaching the segment $\overline{PQ}$ ``from the right"
    and
    the patterns 
    $b^{-}_{\varphi^2}$ 
    and
    $g^{-}_{\varphi^2}$ 
    are encoding the behavior of points approaching the segment $\overline{PQ}$ ``from the left".

\begin{figure}[h]
\centering
\begin{subfigure}[b]{.2\textwidth} 
\centering
\includegraphics[scale = .5]{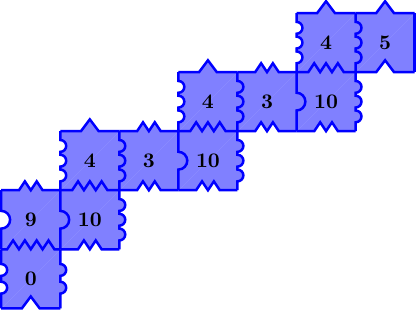}  
\caption{$b^{+}_{\varphi^2}$}
\label{fig:b-plus-phi-sq}
\end{subfigure}
\begin{subfigure}[b]{.2\textwidth} 
\centering
\includegraphics[scale = .5]{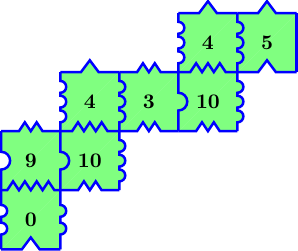}  
\caption{$g^{+}_{\varphi^2}$}
\label{fig:g-plus-phi-sq}
\end{subfigure}
\begin{subfigure}[b]{.2\textwidth} 
\centering
\includegraphics[scale = .5]{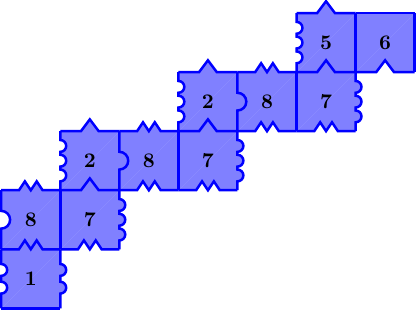}  
\caption{$b^{-}_{\varphi^2}$}
\label{fig:b-minus-phi-sq}
\end{subfigure}
\begin{subfigure}[b]{.2\textwidth} 
\centering
\includegraphics[scale = .5]{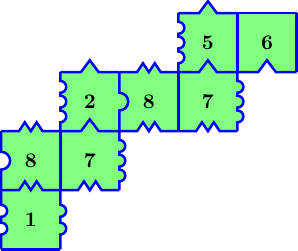}  
\caption{$g^{-}_{\varphi^2}$}
\label{fig:g-minus-phi-sq}
\end{subfigure}
\caption{The patterns $b^+_{\varphi^2}$, $g^+_{\varphi^2}$, $b^-_{\varphi^2}$, and $g^-_{\varphi^2}$.}
\label{fig:blocks-phi-sq} 
\end{figure}

    Thus we see that $T_\alpha$ encodes a sequence of patterns (in the alphabet $\{b^{+}_{\varphi^2},g^{+}_{\varphi^2} \}$ or $\{b^{-}_{\varphi^2},g^{-}_{\varphi^2}\}$) in the nonexpansive direction associated with the slope $\varphi^2$, so $\mathcal{O}_{R_0}(\boldsymbol{p}) \cap \Delta_{\Pcal_0}$ has the structure of a Sturmian word. As described in Equation~\eqref{eqn:T=s}, the mechanical word $s_{\alpha,\rho}$ emulates $T_\alpha$ and is easier to relate to the translations needed to place the patterns, so we switch to $s_{\alpha,\rho}$ for this encoding from here on. From the $b_{\varphi^2}$ and $g_{\varphi^2}$ patterns in Figure~\ref{fig:blocks-phi-sq}, we see that the placement vectors for the $B$ and $G$ patterns are $\boldsymbol{b} = (6,5)$ and $\boldsymbol{g} =(4,4)$, respectively. Thus, utilizing Equations~\eqref{eqn:B_places} and~\eqref{eqn:G_places}, we get set of starting points for the $B$ patterns to be \[ \left(\begin{smallmatrix} 6 & -2 \\ 5 & -1 \end{smallmatrix}\right) H_{\alpha,\rho} \] and the set of starting points for the $G$ patterns will be  \[\left(\begin{smallmatrix} 6 & -2 \\ 5 & -1 \end{smallmatrix}\right) V_{\alpha,\rho}.\] Knowing that these are the starting points for the $B$ and $G$ patterns, truth of the lemma follows.\end{proof}

        To illustrate ideas with an example, let us choose the point $\boldsymbol{p} = (\varphi - 1,0) + (9/10)(2-\varphi,1)$, which is 9/10 of the way along the slope-$\varphi^2$ segment $\overline{PQ}$ in Figure~\ref{fig:IET_phi_sq}, so $\rho = 9/10$. To compute $\mathcal{O}_{R_0}(\boldsymbol{p}) \cap \Delta_{\Pcal_0}$, let $\alpha = 2-\varphi$ and $\rho = 9/10$. The word $s_{\alpha,\rho} = \ldots 010100.101001\ldots$ gives the pattern of $B$ and $G$ patterns along the Conway worm ($B$ is associated with 0 and $G$ with 1). Next, we find \[H_{\alpha,\rho} = \{\ldots,(-6, -2), (-4, -1), (-2, 0), (-1, 0), \text{\hspace{.2in}} (1, 1), (3, 2), (4, 2),\ldots\}\] and \[V_{\alpha,\rho} = \{\ldots,(-5, -2), (-3, -1), \text{\hspace{.2in}} (0, 0), (2, 1), (5, 2),\ldots\}\] (the larger horizontal space in $H_{\alpha,\rho}$ and $V_{\alpha,\rho}$ corresponds to the radix point in $s_{\alpha,\rho}$). From $H_{\alpha,\rho}$ and $V_{\alpha,\rho}$ we calculate \[\left(\begin{smallmatrix} 6 & -2 \\ 5 & -1 \end{smallmatrix}\right) H_{\alpha,\rho} = \{\ldots,(-32, -28), (-22, -19), (-12, -10), (-6, -5), \text{\hspace{.2in}} (4, 4), (14, 13), (20, 18),\ldots\}\] and \[\left(\begin{smallmatrix} 6 & -2 \\ 5 & -1 \end{smallmatrix}\right) V_{\alpha,\rho} = \{\ldots,(-26, -23), (-16, -14), \text{\hspace{.2in}} (0, 0), (10, 9), (26, 23),\ldots\}.\] Notice that the points in $\left(\begin{smallmatrix} 6 & -2 \\ 5 & -1 \end{smallmatrix}\right) H_{\alpha,\rho}$ are exactly the beginning points of the $B$ patterns in Figure~\ref{fig:phi-sq-blocks-tiling} and the points $\left(\begin{smallmatrix} 6 & -2 \\ 5 & -1 \end{smallmatrix}\right) V_{\alpha,\rho}$ are exactly the starting points of the $G$ patterns in Figure~\ref{fig:phi-sq-blocks-tiling}.

\begin{figure}[h]
\centering
\begin{subfigure}[b]{.35\textwidth} 
\centering
\includegraphics[scale = .6 ]{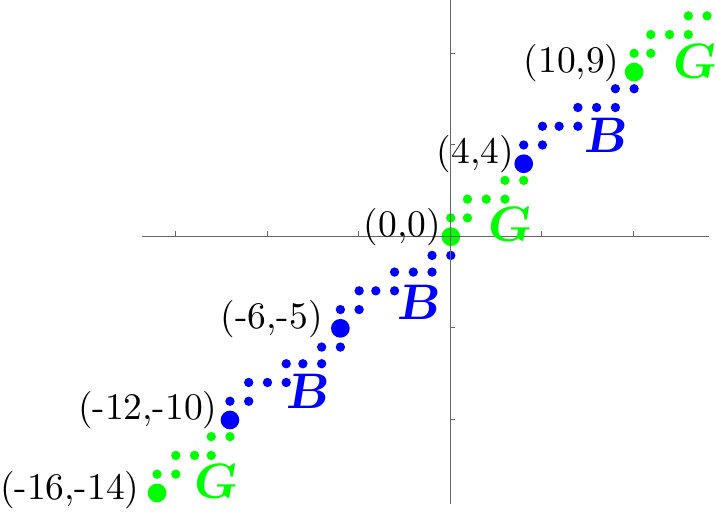}  
    \caption{The elements of $\ZZ^2$ taking\\ $\boldsymbol{p} = (\varphi - 1,0) + (9/10)(2-\varphi,1)$\\ to $\Delta_{\Pcal_0}$.}\label{fig:phi_sq_orbit}
\end{subfigure}
\begin{subfigure}[b]{.6\textwidth} 
\centering
\includegraphics[scale = .3]{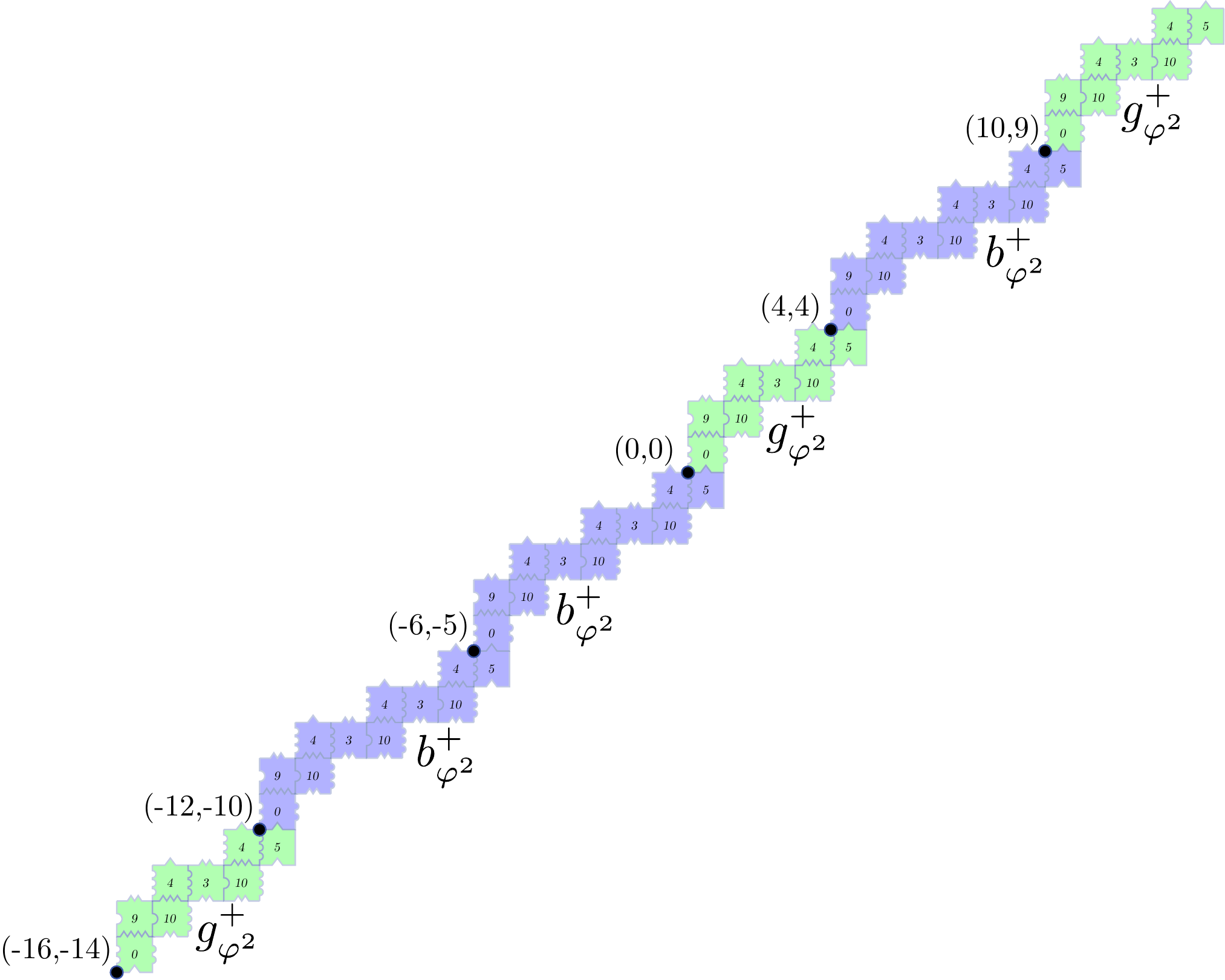}  
    \caption{A partial tiling corresponding to $\mathcal{O}_{R_0}(\boldsymbol{p}) \cap \Delta_{\Pcal_0}$ (with the ``$+$" choice for labeling)}\label{fig:phi-sq-BG-blocks}
\end{subfigure}\caption{An example of the Sturmian structure of a worm $\mathcal{O}_{R_0}(\boldsymbol{p}) \cap \Delta_{\Pcal_0}$}\label{fig:phi-sq-blocks-tiling} 
\end{figure}

\subsection*{Case: $\boldsymbol{p} \in \Delta_{\varphi}$}\mbox{}\\

The following lemma describe the resolution of the Conway worm corresponding to $\Delta_{\varphi}$. 
This case and the remaining cases proceed along the same lines as the slope-$\varphi^2$ case. 

\begin{lemma} 
    Let $x,y\in\Xcal_{\Pcal_0,R_0}$ be two configurations in the Jeandel-Rao Wang
    shift such that $x\neq y$
    and $f(x)=f(y)=\bp\in\Delta_{\varphi}\setminus\mathcal{O}_{R_0}(\boldsymbol{0})$.
    Suppose that $\bp$ lies on the segment $Z_\varphi$ with endpoints $(0,0)$ 
            and $(\varphi^{-1},1)$, that is, let 
    $\bp=(1-\rho)(0,0)+\rho(\varphi^{-1},1)$ for some $\rho\in\RR$ such that $0<\rho<1$.
    Then there exist 
    finite subsets $B_{\varphi},G_{\varphi}\subset\ZZ^2$ such that the Conway worm is
    \begin{equation}
        W_{\varphi}(\bp) = \left[\left(\begin{smallmatrix} -2 & 1 \\ 5 & -1 \end{smallmatrix}\right) H_{\alpha,\rho} + B_{\varphi}\right] 
        \cup
        \left[\left(\begin{smallmatrix} -2 & 1 \\ 5 & -1 \end{smallmatrix}\right) V_{\alpha,\rho} + G_{\varphi}\right] \label{eqn:orbit_formula} \end{equation} 
            where $\alpha=2-\varphi$.
    Moreover, 
    there exist 
        patterns $b^{-}_{\varphi}$, $b^{+}_{\varphi}$ of support $B_{\varphi}$
        and patterns $g^{-}_{\varphi}$, $g^{+}_{\varphi}$ of support $G_{\varphi}$
    such that the two resolutions $x|_{D(x,y)}$ and $y|_{D(x,y)}$ of the Conway
    worm are constructed using these, that is,
    \begin{align*}
      x|_{\left(\begin{smallmatrix} -2 & 1 \\ 5 & -1 \end{smallmatrix}\right) 
              \boldsymbol{h}+B_{\varphi}} = b^{+}_{\varphi}
              \quad
              \text{ and }
              \quad
      x|_{\left(\begin{smallmatrix} -2 & 1 \\ 5 & -1 \end{smallmatrix}\right) 
              \boldsymbol{v}+G_{\varphi}} = g^{+}_{\varphi}\\
      y|_{\left(\begin{smallmatrix} -2 & 1 \\ 5 & -1 \end{smallmatrix}\right) 
              \boldsymbol{h}+B_{\varphi}} = b^{-}_{\varphi}
              \quad
              \text{ and }
              \quad
      y|_{\left(\begin{smallmatrix} -2 & 1 \\ 5 & -1 \end{smallmatrix}\right) 
              \boldsymbol{v}+G_{\varphi}} = g^{-}_{\varphi}
    \end{align*}
    for every $\boldsymbol{h} \in H_{\alpha,\rho}$ 
          and $\boldsymbol{v} \in V_{\alpha,\rho}$.
\label{lem:phi-rot-new}\end{lemma}

\begin{proof} Assume $\boldsymbol{p}$ lies on the $\Delta_{\varphi}$-line
    from $P=(0,0)$ to $Q=(\varphi-1,1)$, as seen at the bottom of
    Figure~\ref{fig:IET_phi}).
We illustrate in Figure~\ref{fig:IET_phi} the orbit of the point $\bp$ under the
    $\ZZ^2$-action $R_0$ which stays inside $\Delta_{\varphi}+\Gamma_0$ until it returns
    to the original segment $\overline{PQ}$.
    We divide the segment $\overline{PQ}$ into two subintervals: the blue subinterval $B$ has length $b = \sqrt{7 - 4\varphi}$ and the green subinterval $G$ has length $g = \sqrt{18-11\varphi}$. In Figure~\ref{fig:IET_phi} the nearby $\ZZ^2$-translates of these subintervals are depicted, and by inspection one sees that, as before, an irrational rotation of the unit interval captures the $\ZZ^2$ action on these points; specifically, comparing the shaded box on the lower right and the two shaded boxes toward the top of Figure~\ref{fig:IET_phi}, which are equivalent modulo $\Gamma_0$, shows this rotation. Just as in the $\varphi^2$ case, we observe that $ b/(b+g) = \varphi - 1$ and $ g/(b+g) = 2 - \varphi$, so that, after scaling by $1/(b+g)$, we see this rotation is again captured by the rotation $T_\alpha$ of Equation~\eqref{eqn:IET-T} with $\alpha = 2 - \varphi$.  

\begin{figure}[h]
\centering
\begin{subfigure}[b]{.65\textwidth} 
\centering
\includegraphics[scale = .75 ]{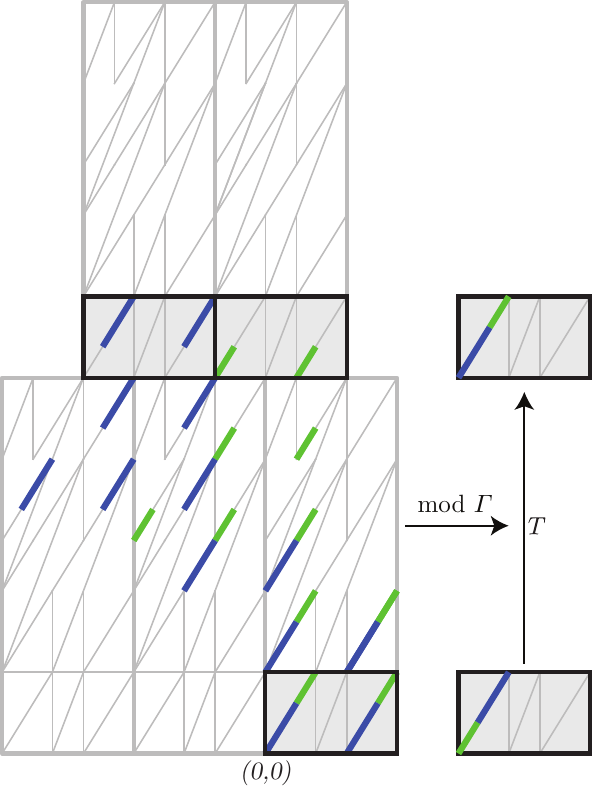}  
\caption{The rotation $T_{\varphi}$}\label{fig:IET_phi}
\end{subfigure}
\begin{subfigure}[b]{.3\textwidth} 
\centering
\includegraphics[scale = .48]{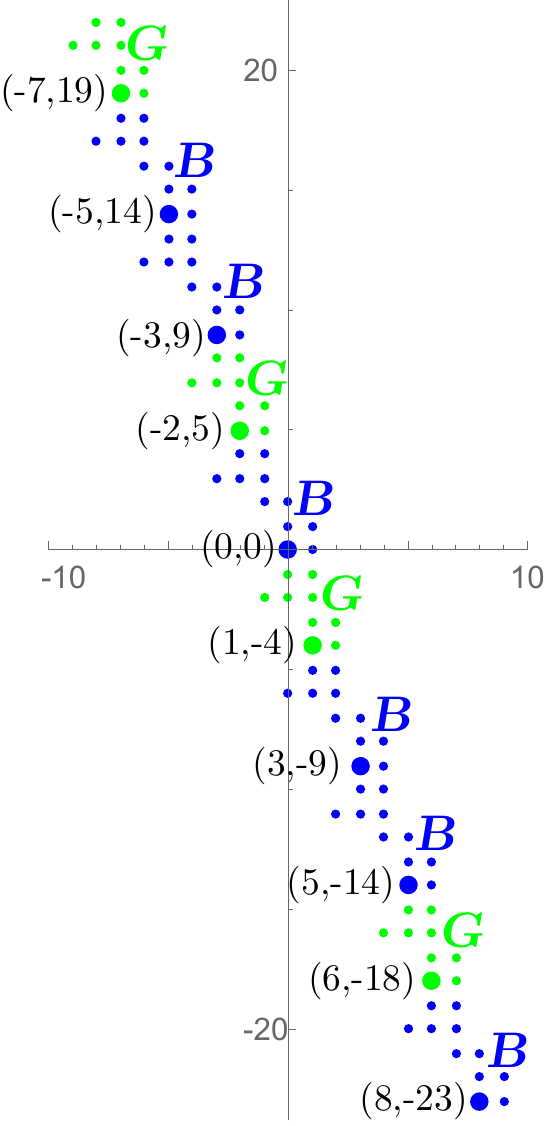}  
    \caption{The elements of $\ZZ^2$ taking the point $p = (1/4, \varphi/4)$ to $\Delta_{\Pcal_0}$.}\label{fig:phi_orbit}
\end{subfigure}\caption{The dynamics of slope-$\varphi$ segments in $\Delta_{\Pcal_0}$ under $\ZZ^2$ translation.}\label{fig:phi_dynamics} 
\end{figure}

\begin{figure}[h]
\centering
\begin{subfigure}[b]{.2\textwidth} 
\centering
\includegraphics[scale = .5]{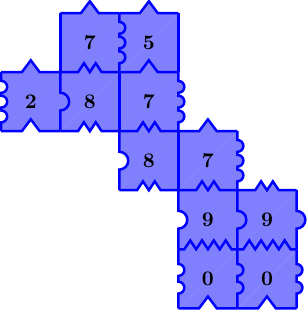}  
\caption{$b^{+}_{\varphi}$}
\label{fig:b-plus-phi}
\end{subfigure}
\begin{subfigure}[b]{.2\textwidth} 
\centering
\includegraphics[scale = .5]{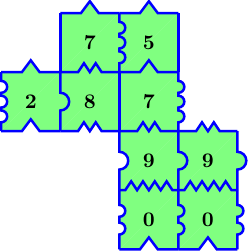}  
\caption{$g^{+}_{\varphi}$}
\label{fig:g-plus-phi}
\end{subfigure}
\begin{subfigure}[b]{.2\textwidth} 
\centering
\includegraphics[scale = .5]{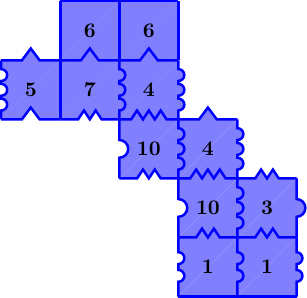}  
\caption{$b^{-}_{\varphi}$}
\label{fig:b-minus-phi}
\end{subfigure}
\begin{subfigure}[b]{.2\textwidth} 
\centering
\includegraphics[scale = .5]{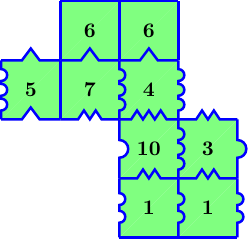}  
\caption{$g^{-}_{\varphi}$}
\label{fig:g-minus-phi}
\end{subfigure}
\caption{The patterns $b^+_{\varphi}$, $g^+_{\varphi}$, $b^-_{\varphi}$, and $g^-_{\varphi}$.}
\label{fig:blocks-phi} 
\end{figure}

    Keeping track of the intermediate unit $\ZZ^2$ shifts that move the blue and green segments from their starting position to their reversed positions at the top of Figure~\ref{fig:IET_phi} under $T_{\alpha}$ and associating the symbols $B$ and $G$ with these lists of $\ZZ^2$ shifts prompts us to define the supports
    \begin{align*}
B_{\varphi} &=  \{(0,0),(1,0),(0,1),(1,1), (-1,2), (0,2),(-3,3), (-2,3), (-1,3),(-2,4),(-1,4)\},\\
G_{\varphi} &= \{(0,0),(1,0),(0,1),(1,1), (-2,2), (-1,2), (0,2), (-1,3),(0,3)\}.
    \end{align*} 
    We also define the patterns 
    $b^{+}_{\varphi}$ and $b^{-}_{\varphi}$ of support $B_{\varphi}$
    and the patterns
    $g^{+}_{\varphi}$ and $g^{-}_{\varphi}$ of support $G_{\varphi}$, see
    Figure~\ref{fig:blocks-phi}.
    These patterns can be seen in the middle row of Figure~\ref{fig:other-3-Conway-worms-in-JR}. 
    The patterns 
    $b^{+}_{\varphi}$ 
    and
    $g^{+}_{\varphi}$ 
    are encoding the behavior of points approaching the segment $\overline{PQ}$ ``from the right"
    and
    the patterns 
    $b^{-}_{\varphi}$ 
    and
    $g^{-}_{\varphi}$ 
    are encoding the behavior of points approaching the segment $\overline{PQ}$ ``from the left".

The placement vectors 
giving the relative positions of the translated support $B_\varphi$ and $G_\varphi$
    are $\boldsymbol{b} = (-2,5)$ and $\boldsymbol{g} = (-1,4)$ 
from which we deduce the matrix 
$\left(\begin{smallmatrix} -2 & 1 \\ 5 & -1 \end{smallmatrix}\right)$
    using Equations~\eqref{eqn:B_places} and~\eqref{eqn:G_places}.
\end{proof}

As an example, consider the point $\boldsymbol{p} = (1/4,\varphi/4)$, which lies 1/4 of the way along $\overline{PQ}$ (in the $B$ subinterval), so $\rho = 1/4$. From there we get $s_{\alpha,\rho} = \ldots 01001.01001\ldots$. Next we find \[\left(\begin{smallmatrix} -2 & 1 \\ 5 & -1  \end{smallmatrix}\right) H_{\alpha,\rho} = \{\ldots,(8, -23), (5, -14), (3, -9), \text{\hspace{.2in}} (0, 0), (-3, 9), (-5, 14), (-8, 23),\ldots\}\] and \[\left(\begin{smallmatrix} -2 & 1 \\ 5 & -1  \end{smallmatrix}\right) V_{\alpha,\rho} = \{\ldots,(9, -27), (6, -18), (1, -4), \text{\hspace{.2in}}(-2, 5), (-7, 19),\ldots\}.\]  In Figure~\ref{fig:phi_orbit} we see how these points serve to move the supports of $b^{+}_{\varphi}$, $b^{-}_{\varphi}$, $g^{+}_{\varphi}$, and $g^{-}_{\varphi}$ into place along the nonexpansive strip.

\subsection*{Case: $\boldsymbol{p} \in \Delta_{\infty}$.}\mbox{}\\

    \begin{lemma} 
    Let $x,y\in\Xcal_{\Pcal_0,R_0}$ be two configurations in the Jeandel-Rao Wang
    shift such that $x\neq y$
    and $f(x)=f(y)=\bp\in\Delta_{\infty}\setminus\mathcal{O}_{R_0}(\boldsymbol{0})$.
    Suppose that $\bp$ lies on the vertical segment $Z_\infty$ with endpoints $(\varphi -
        1,0)$ and $(\varphi-1,1)$, that is, let 
        $\bp=(1-\rho)(\varphi^{-1},0)+\rho(\varphi^{-1},1)$ for some $\rho\in\RR$ such that 
    $0<\rho<1$.
    Then there exist 
    finite subsets $B_{\infty},G_{\infty}\subset\ZZ^2$ such that the Conway worm is
    \begin{equation}
        W_{\infty}(\bp) = \left[\left(\begin{smallmatrix} 1 & 0 \\ 5 & -1 \end{smallmatrix}\right) H_{\alpha,\rho} + B_{\infty}\right] 
        \cup
        \left[\left(\begin{smallmatrix} 1 & 0 \\ 5 & -1 \end{smallmatrix}\right) V_{\alpha,\rho} + G_{\infty}\right] \label{eqn:orbit_formula} \end{equation} 
            where $\alpha=2-\varphi$.
    Moreover, 
    there exist 
        patterns $b^{-}_{\infty}$, $b^{+}_{\infty}$ of support $B_{\infty}$
        and patterns $g^{-}_{\infty}$, $g^{+}_{\infty}$ of support $G_{\infty}$
    such that the two resolutions $x|_{D(x,y)}$ and $y|_{D(x,y)}$ of the Conway
    worm are constructed using these, that is,
    \begin{align*}
      x|_{\left(\begin{smallmatrix} 1 & 0 \\ 5 & -1 \end{smallmatrix}\right) 
              \boldsymbol{h}+B_{\infty}} = b^{+}_{\infty}
              \quad
              \text{ and }
              \quad
      x|_{\left(\begin{smallmatrix} 1 & 0 \\ 5 & -1 \end{smallmatrix}\right) 
              \boldsymbol{v}+G_{\infty}} = g^{+}_{\infty}\\
      y|_{\left(\begin{smallmatrix} 1 & 0 \\ 5 & -1\end{smallmatrix}\right) 
              \boldsymbol{h}+B_{\infty}} = b^{-}_{\infty}
              \quad
              \text{ and }
              \quad
      y|_{\left(\begin{smallmatrix} 1 & 0 \\ 5 & -1 \end{smallmatrix}\right) 
              \boldsymbol{v}+G_{\infty}} = g^{-}_{\infty}
    \end{align*}
    for every $\boldsymbol{h} \in H_{\alpha,\rho}$ 
          and $\boldsymbol{v} \in V_{\alpha,\rho}$.
\label{lem:phi-infty-new}\end{lemma}

\begin{proof}
The proof is similar to proof of Lemma~\ref{lem:phi-sq-rot-new} and Lemma~\ref{lem:phi-rot-new}.
    Assume $\boldsymbol{p}$ lies on the $\Delta_{\infty}$-line from 
    $P=(\varphi - 1,0)$ and $Q=(\varphi-1,1)$.
We illustrate in Figure~\ref{fig:IET_vert} the orbit of the point $\bp$ under the
    $\ZZ^2$-action $R_0$ which stays inside $\Delta_{\infty}+\Gamma_0$ until it returns
    to the original segment $\overline{PQ}$.

\begin{figure}[h]
\centering
\begin{subfigure}[b]{.65\textwidth} 
\centering
\includegraphics[scale = .75 ]{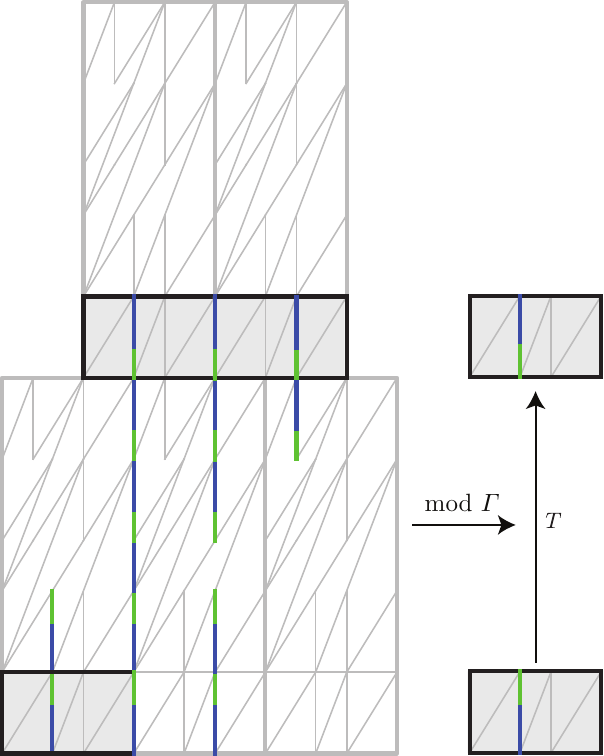}  
\caption{The rotation $T_{\infty}$}\label{fig:IET_vert}
\end{subfigure}
\begin{subfigure}[b]{.3\textwidth} 
\centering
\includegraphics[scale = .4]{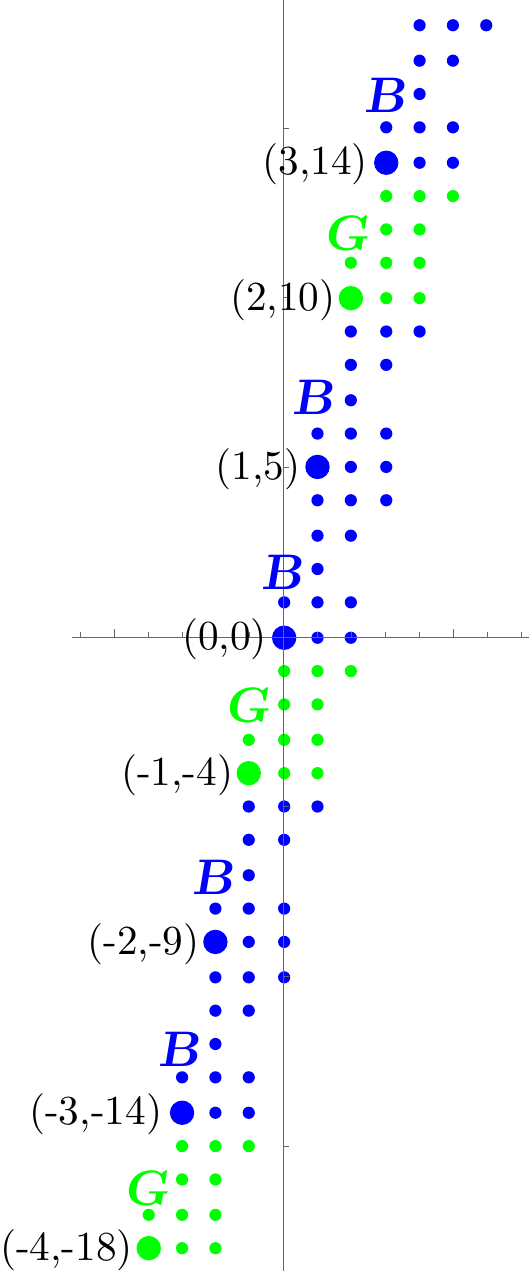}  
    \caption{The elements of $\ZZ^2$ taking the point $\boldsymbol{p} = (\varphi - 1, 1/5)$ on a vertical segment to $\Delta_{\Pcal_0}$.}\label{fig:vert_orbit}
\end{subfigure}\caption{The dynamics of vertical segments in $\Delta_{\Pcal_0}$ under $\ZZ^2$ translation.}\label{fig:vert_dynamics} 
\end{figure}

\begin{figure}[h]
\centering
\begin{subfigure}[b]{.2\textwidth} 
\centering
\includegraphics[scale = .5]{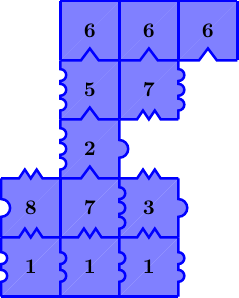}  
\caption{$b^{+}_{\infty}$}
\label{fig:b-plus-infty}
\end{subfigure}
\begin{subfigure}[b]{.2\textwidth} 
\centering
\includegraphics[scale = .5]{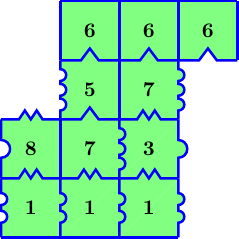}  
\caption{$g^{+}_{\infty}$}
\label{fig:g-plus-infty}
\end{subfigure}
\begin{subfigure}[b]{.2\textwidth} 
\centering
\includegraphics[scale = .5]{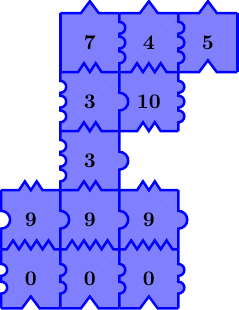}  
\caption{$b^{-}_{\infty}$}
\label{fig:b-minus-infty}
\end{subfigure}
\begin{subfigure}[b]{.2\textwidth} 
\centering
\includegraphics[scale = .5]{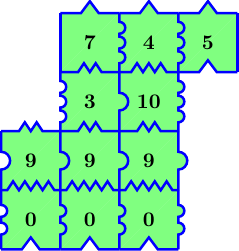}  
\caption{$g^{-}_{\infty}$}
\label{fig:g-minus-infty}
\end{subfigure}
\caption{The patterns $b^+_{\infty}$, $g^+_{\infty}$, $b^-_{\infty}$, and $g^-_{\infty}$.}
\label{fig:blocks-infty} 
\end{figure}

    We divide the segment $\overline{PQ}$ into two subintervals: the blue subinterval $B$ has length $b = \varphi - 1$ and the green subinterval $G$ has length $g = 2 - \varphi$. By inspection of this figure we see that once again the rotation $T_\alpha$ of Equation~\eqref{eqn:IET-T} with $\alpha = 2 - \varphi$ captures the exchange of intervals $B$ and $G$. This time we have the supports
    \begin{align*}
B_{\infty} &= \{(0,0),(1,0),(2,0),(0,1),(1,1),(2,1),(1,2),(1,3),(2,3),(1,4),(2,4),(3,4)\},\\
G_{\infty} &= \{(0,0),(1,0),(2,0),(0,1),(1,1),(2,1),(1,3),(2,3),(1,4),(2,4),(3,4)\}.
    \end{align*}
    We define the patterns 
    $b^{+}_{\infty}$ and $b^{-}_{\infty}$ of support $B_{\infty}$
    and the patterns
    $g^{+}_{\infty}$ and $g^{-}_{\infty}$ of support $G_{\infty}$,
    see Figure~\ref{fig:blocks-infty}.
    These patterns can be seen in the top row of Figure~\ref{fig:other-3-Conway-worms-in-JR}. 
    The patterns 
    $b^{+}_{\infty}$ 
    and
    $g^{+}_{\infty}$ 
    are encoding the behavior of points approaching the segment $\overline{PQ}$ ``from the right"
    and
    the patterns 
    $b^{-}_{\infty}$ 
    and
    $g^{-}_{\infty}$ 
    are encoding the behavior of points approaching the segment $\overline{PQ}$ ``from the left".
    The placement vectors for these patterns are $\boldsymbol{b} = (1,5)$ and
    $\boldsymbol{g} = (1,4)$.
\end{proof}

For example, we consider the orbit of the point $\boldsymbol{p} = (\varphi - 1,1/5)$, which corresponds to $\rho = 1/5$, yielding $s_{\rho,\alpha} = \ldots1001.1101\ldots$ for the pattern of $B$ and $G$ patterns along the Conway worm in the corresponding nonexpansive direction. From there we get the pattern starting points \begin{align*}\left(\begin{smallmatrix} 1 & 0 \\ 5 & -1  \end{smallmatrix}\right) H_{\alpha,\rho} &= \{\ldots,(-3, -14), (-2, -9), \text{\hspace{.2in}} (0, 0), (1, 5), (3, 14),\ldots\} \text{, and}\\ 
    \left(\begin{smallmatrix} 1 & 0 \\ 5 & -1  \end{smallmatrix}\right) V_{\alpha,\rho} &= \{\ldots,(-4, -18), (-1, -4), \text{\hspace{.2in}}(2, 10),\ldots\}.\end{align*} In Figure~\ref{fig:vert_orbit} we give a visualization of  $\mathcal{O}_{R_0}(\boldsymbol{p}) \cap \Delta_{\Pcal_0}$, shifted so that $\boldsymbol{p}$ corresponds to the origin so that we are seeing the elements of $\ZZ^2$ taking $\boldsymbol{p} = (\varphi - 1, 1/5)$ to $\Delta_{\Pcal_0}$. 

\subsection*{Case: $\boldsymbol{p} \in \Delta_0$}\mbox{}\\

\begin{lemma} 
    Let $x,y\in\Xcal_{\Pcal_0,R_0}$ be two configurations in the Jeandel-Rao Wang
    shift such that $x\neq y$
    and $f(x)=f(y)=\bp\in\Delta_{0}\setminus\mathcal{O}_{R_0}(\boldsymbol{0})$.
    Suppose that $\bp$ lies on the horizontal segment $Z_0$ with endpoints $(0,0)$ 
            and $(\varphi,0)$, that is, let 
        $\bp=(1-\rho)(0,0)+\rho(\varphi,0)$ for some $\rho\in\RR$ such that 
    $0<\rho<1$.
    Then there exist 
    finite subsets $B_{0},G_{0}\subset\ZZ^2$ such that the Conway worm is
    \begin{equation}
        W_{0}(\bp) = \left[\left(\begin{smallmatrix} 1 & 0 \\ 0 & 0 \end{smallmatrix}\right) H_{\alpha,\rho} + B_{0}\right] 
        \cup
        \left[\left(\begin{smallmatrix} 1 & 0 \\ 0 & 0 \end{smallmatrix}\right) V_{\alpha,\rho} + G_{0}\right] \label{eqn:orbit_formula} \end{equation} 
            where $\alpha=2-\varphi$.
    Moreover, 
    there exist 
        patterns $b^{-}_{0}$, $b^{+}_{0}$ of support $B_{0}$
        and patterns $g^{-}_{0}$, $g^{+}_{0}$ of support $G_{0}$
    such that the two resolutions $x|_{D(x,y)}$ and $y|_{D(x,y)}$ of the Conway
    worm are constructed using these, that is,
    \begin{align*}
      x|_{\left(\begin{smallmatrix} 1 & 0 \\ 0 & 0 \end{smallmatrix}\right) 
              \boldsymbol{h}+B_{0}} = b^{+}_{0}
              \quad
              \text{ and }
              \quad
      x|_{\left(\begin{smallmatrix} 1 & 0 \\ 0 & 0 \end{smallmatrix}\right) 
              \boldsymbol{v}+G_{0}} = g^{+}_{0}\\
      y|_{\left(\begin{smallmatrix} 1 & 0 \\ 0 & 0\end{smallmatrix}\right) 
              \boldsymbol{h}+B_{0}} = b^{-}_{0}
              \quad
              \text{ and }
              \quad
      y|_{\left(\begin{smallmatrix} 1 & 0 \\ 0 & 0 \end{smallmatrix}\right) 
              \boldsymbol{v}+G_{0}} = g^{-}_{0}
    \end{align*}
    for every $\boldsymbol{h} \in H_{\alpha,\rho}$ 
          and $\boldsymbol{v} \in V_{\alpha,\rho}$.
\label{lem:phi-hor-new}\end{lemma}

            \begin{proof}
    Assume $\boldsymbol{p}$ lies on the $\Delta_{0}$-line from 
    $P=(0,0)$ to $Q=(\varphi,0)$.
We illustrate in Figure~\ref{fig:IET_hor} the orbit of the point $\bp$ under the
    $\ZZ^2$-action $R_0$ which stays inside $\Delta_{0}+\Gamma_0$ until it returns
    to the original segment $\overline{PQ}$.

\begin{figure}[h]
\centering
\begin{subfigure}[b]{.6\textwidth} 
\centering
\includegraphics[scale = 1 ]{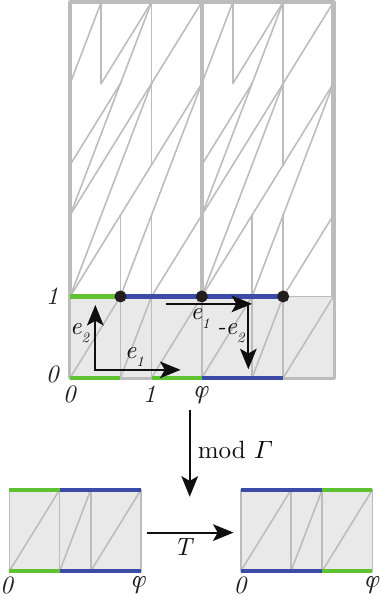}  
\caption{The rotation $T_{\text{hor}}$}\label{fig:IET_hor}
\end{subfigure}
\begin{subfigure}[b]{.38\textwidth} 
\centering
\includegraphics[scale = .55]{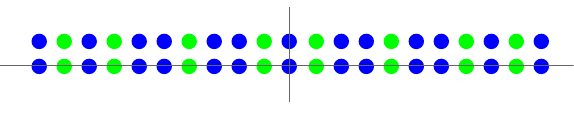}  
    \caption{The set $W_0(\bp)$.}\label{fig:hor_orbit}
\end{subfigure}
    \caption{The dynamics of horizontal segments in $\Delta_{\Pcal_0}$ under $\ZZ^2$ translation.}\label{fig:hor_dynamics} 
\end{figure}

We appeal to Figure~\ref{fig:all-Conway-worms-in-JR-in-one-image} and Figure~\ref{fig:IET_hor} to obtain the required support sets, which are $B_0 = G_0 = \{(0,0), (0,1)\}$ and corresponding patterns $b^{+}_{0}$, $b^{-}_{0}$, $g^{+}_{0}$, and $g^{-}_{0}$, see Figure~\ref{fig:blocks-0}. Also, from Figure~\ref{fig:IET_hor} we deduce that once again the rotation $T_\alpha$ of Equation~\eqref{eqn:IET-T} with $\alpha = \varphi - 2$ captures the rotation happening there.
            \end{proof}

\begin{figure}[h]
\centering
\begin{subfigure}[b]{.2\textwidth} 
\centering
\includegraphics[scale = .5]{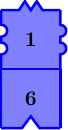}  
\caption{$b^{+}_{0}$}
\label{fig:b-plus-0}
\end{subfigure}
\begin{subfigure}[b]{.2\textwidth} 
\centering
\includegraphics[scale = .5]{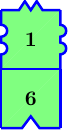}  
\caption{$g^{+}_{0}$}
\label{fig:g-plus-0}
\end{subfigure}
\begin{subfigure}[b]{.2\textwidth} 
\centering
\includegraphics[scale = .5]{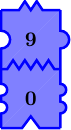}  
\caption{$b^{-}_{0}$}
\label{fig:b-minus-0}
\end{subfigure}
\begin{subfigure}[b]{.2\textwidth} 
\centering
\includegraphics[scale = .5]{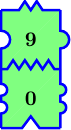}  
\caption{$g^{-}_{0}$}
\label{fig:g-minus-0}
\end{subfigure}
\caption{The patterns $b^+_{0}$, $g^+_{0}$, $b^-_{0}$, and $g^-_{0}$.}
\label{fig:blocks-0} 
\end{figure}

As an example, we consider the orbit of the point $\boldsymbol{p} = 3/10(\varphi,0)$, which corresponds to $\rho= 3/10$. We obtain 
$s_{\alpha,\rho} = \ldots 0101001001.01001001010\ldots$ with 
{\small 
\begin{align*}
\left(\begin{smallmatrix} 1 & 0 \\ 0 & 0  \end{smallmatrix}\right) H_{\alpha,\rho} 
     &= \{\ldots,(-10, 0), (-8, 0), (-6, 0), (-5, 0), (-3, 0), (-2, 0), \text{\hspace{.2in}} (0, 0), (2, 0), (3, 0), (5, 0), (6, 0), (8, 0),\ldots\},\\
\left(\begin{smallmatrix} 1 & 0 \\ 0 & 0  \end{smallmatrix}\right) V_{\alpha,\rho} 
&= \{\ldots,(-9, 0), (-7, 0), (-4, 0), (-1, 0), \text{\hspace{.2in}}(1, 0), (4, 0), (7, 0), (9, 0),\ldots\}.
\end{align*}}
In Figure~\ref{fig:hor_orbit} we give a visualization of $W_0(\bp)$.

We may now prove the second main result. 

\begin{proof}[Proof of Theorem~\ref{thm:resolution}]
    Let $x,y\in\Xcal_{\Pcal_0,R_0}$ be two configurations in the Jeandel-Rao Wang
    shift such that $x\neq y$
    and $f(x)=f(y)=\bp\in\Delta_{\Pcal_0,R_0}\setminus\mathcal{O}_{R_0}(\boldsymbol{0})$.
    From Lemma~\ref{lem:Dxy=Wip},
    there exists $i \in \{0,\infty,\varphi,\varphi^2\}$ such that
    \[
        D(x,y)=\{\bn\in \ZZ^2\mid x_\bn \neq y_\bn\}
        =W_i(\bp).
    \]
    From Proposition~\ref{prop:orbit-remaining-in-partition-boundary},
    the set $D(x,y)$ is a Conway worm associated to some nonexpansive subspace $F$.

Since $\bp\in\Delta_{\Pcal_0,R_0}$, there exists $\bk_1\in\Z^2$ such
    that $R_0^{\bk_1}(\bp)\in\Delta_{\Pcal_0}$
    which implies $R_0^{\bk_1}(\bp)\in\Delta_i$.
From Lemma~\ref{lem:move_to_origin}, there exists $\bk_2\in\Z^2$ 
    that $R_0^{\bk_2}(R_0^{\bk_1}(\bp))\in Z_i$
    where $Z_i$ is the slope-$i$ base segment in the partition $\Pcal_0$.
    Let $\bk=\bk_1+\bk_2$.
    We consider the shifted configurations
    $\sigma^{\bk}(x)$
    and
    $\sigma^{\bk}(y)$.
    We have that
    $f(\sigma^{\bk}(x))
         =R_0^{\bk}(f(x))
         =R_0^{\bk}(\bp)
         =R_0^{\bk}(f(y))
         =f(\sigma^{\bk}(y))\in Z_i$
    and
    \[
        D(x,y) 
        = \bk+ D(\sigma^{\bk}(x), \sigma^{\bk}(y))
        = \bk+ W_i(R_0^\bk(\bp)).
    \]
    The result follows from
Lemma~\ref{lem:phi-sq-rot-new},
Lemma~\ref{lem:phi-rot-new},
Lemma~\ref{lem:phi-infty-new} and
Lemma~\ref{lem:phi-hor-new}
applied to the point $R_0^\bk(\bp)\in Z_i$.
\end{proof}

\section{Octopods and related questions}\label{sec:octopods}

Penrose's low-order aperiodic protoset and its connection to quasicrystalline
structures prompted research into the idea of ``seeds" around which a perfectly
formed quasicrystal can form, yet the seed itself is not part of the perfect
quasicrystalline structure, such as when an ice crystal forms around a particle
of some non-ice material. In terms of 2-D tilings, an \emph{essential hole} is
an empty finite region of the plane around which a unique valid tiling can be
formed, yet the hole itself cannot be filled in by copies of tiles from the
protoset as part of a valid tiling \cite[p.566-567]{GS1}. 
Essential holes in the context of Penrose tilings can be obtained from the
tiling where 10 half-infinite Conway worms emerge out of a central regular
dodecagon cartwheel.
Flipping the 10 half-infinite Conway worms from one resolution
to the other around the cartwheel leads to $2^{10}=1024$ combinations. 
Up to isometry, Conway (\cite{MR968892,zbMATH01014067}) 
identified 62 finite regions called \emph{decapods}.
Two decapods called \emph{Batman} and \emph{Asterix} are legal patterns and thus can be completed
to an infinite tiling of the plane in infinitely many ways.
The 60 other decapods are such that a unique infinite Penrose tilings can be
formed around them but such that the hole formed by the decapod cannot be
filled in by the Penrose rhombs as part of a valid tiling. 

Later, in Onodo et.
al \cite{PhysRevLett.60.2653}, it is pointed out that completing the tiling
around a decapod can be carried out using only local interactions, based on
restricting the matching rules of Penrose tiles through vertex matching rules.
This bolstered the idea that quasicrystals can form around seeds in a natural
way, based only on the local attraction of the molecules of the crystal and
without need of any scaffolding to enforce long-range structure. More recently,
Galanov explored this topic in the context of cut-and-project schemes
\cite{galan2019}, coining the term ``bad seed" to describe such holes that can
be extended to unique tilings.

We suggest to say that an \emph{octopod} is a decapod in the context of Jeandel-Rao Wang shift 
due to the 4 nonexpansive directions and to the 8 half-infinite Conway worms
running out of a central region in some specific tiling, see
Figure \ref{fig:all-Conway-worms-in-JR-in-one-image}. An octopod with
Jeandel-Rao Wang tiles is shown in Figure~\ref{fig:octopod}.
We believe it is an essential hole as it can be extended to a tiling of the
plane except the hole, see Figure~\ref{fig:bad_seed_completed}.

\begin{question}
    Prove that the octopod shown in Figure~\ref{fig:octopod} is an essential hole.
\end{question}

An answer to this question might rely on Conjecture \ref{conj:ergodic}. This
conjecture implies that the only configurations in $\Omega_0 \setminus
\mathcal{X}_{\mathcal{P}_0,R_0}$ are configurations in
$\mathcal{X}_{\mathcal{P}_0,R_0}$ that have had their bottom (or top) halves
shifted along an horizontal fault line.

\begin{figure}[h]
    \includegraphics[width=3cm]{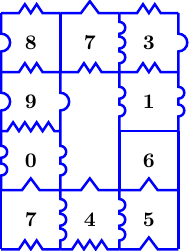}
    \caption{An octopod for the Jeandel-Rao Wang shift.}
    \label{fig:octopod}
\end{figure}

\begin{figure}[h]
\begin{center}
  \includegraphics[width=0.75\textwidth]{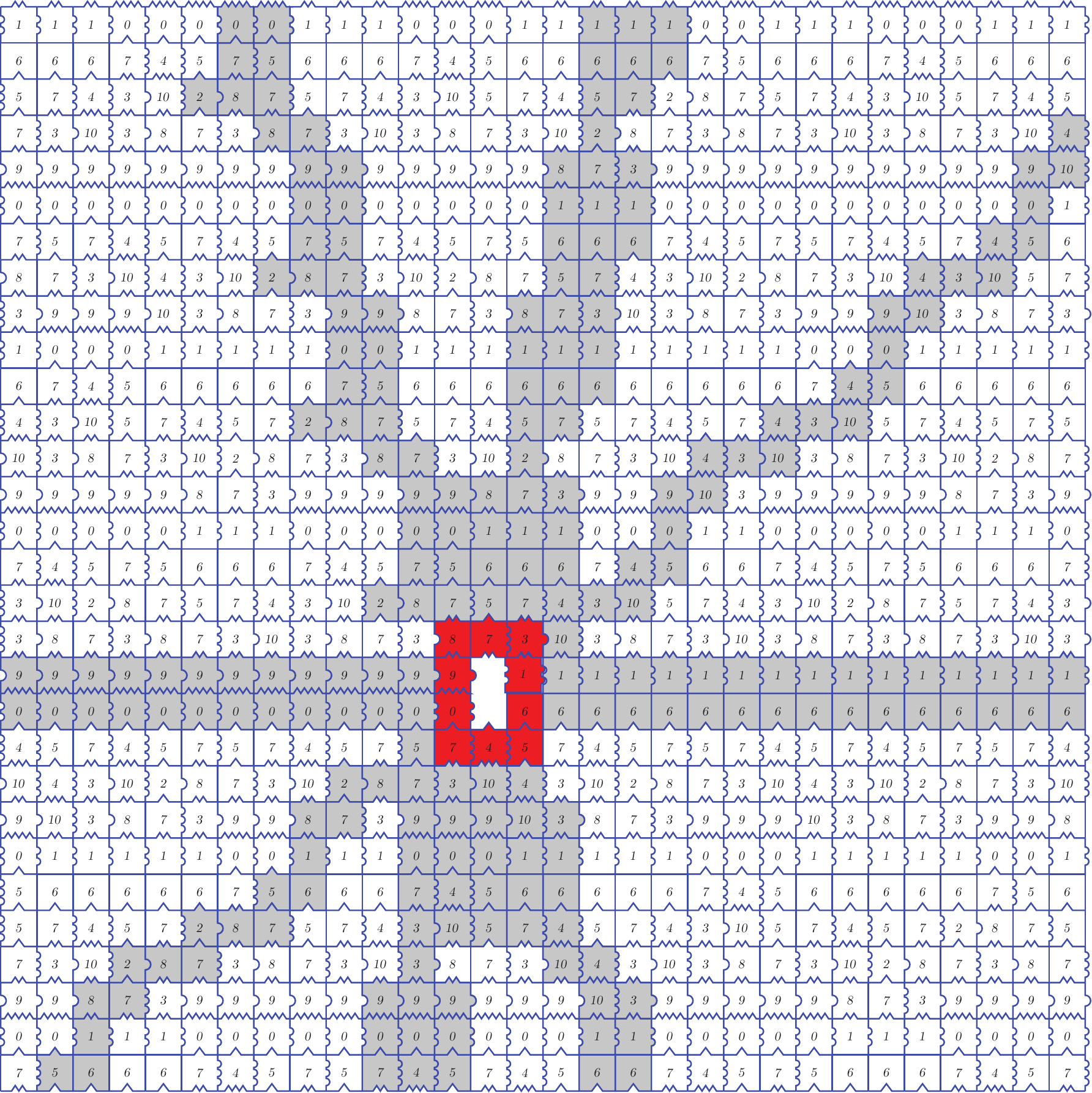}
\end{center}
    \caption{A valid tiling around an octopod in the Jeandel-Rao Wang shift.
    The hole can not be filled by copies of the Jeandel-Rao Wang tiles.}
    \label{fig:bad_seed_completed}
\end{figure}

Figure \ref{fig:all-Conway-worms-in-JR-in-one-image} shows the 8
half-infinite Conway worms running out of a central region. 
Similarly as for the Penrose tilings, we can flip each of the 8 half-infinite
Conway worms to obtain a total of $2^8=256$ possible combinations for a central
octopod.

\begin{question}
    How many different octopods exists for the Jeandel-Rao Wang shift? 
    How many of them are essential holes?
\end{question}

\bibliographystyle{alpha}
\bibliography{biblio}

\end{document}